\DeclareSymbolFontAlphabet{\mathbb}{AMSb}%
\setlist{itemsep=4pt}
\DeclareSymbolFont{eulerletters}{U}{zeur}{m}{n}
\DeclareMathSymbol{\eulL}{\mathord}{eulerletters}{`L}
  \renewcommand{\bm}[1]{#1}%
\DeclareMathOperator*{\argmax}{argmax\,}
\newcommand{\hair}{\ifmmode\mskip1mu\else\kern0.08em\fi}
\renewcommand{\P}{\mathbb{P}}
\newcommand{\E}{\mathbb{E}}
\newcommand{\R}{\mathbb{R}}
\newcommand{\N}{\mathbb{N}}
\newcommand{\Z}{\mathbb{Z}}
\newcommand{\Q}{\mathbb{Q}}
\newcommand{\one}{\mathbbm{1}}
\newcommand{\intint}[1]{\llbracket #1 \rrbracket}
\newcommand{\mrm}{\mathrm}
\newcommand{\msf}{\mathsf}
\newcommand{\mc}{\mathcal}
\newcommand{\mf}{\mathfrak}
\newcommand{\cL}{\mathcal L}
\renewcommand{\S}{\mathcal S}
\newcommand{\h}{\mathfrak h}
\newcommand{\g}{\mathfrak g}
\newcommand{\f}{\mathfrak f}
\newcommand{\iid}{i.i.d.\ }
\newcommand{\midd}{\ \Big|\  }
\newcommand{\floor}[1]{\lfloor #1 \rfloor}
\newcommand{\hssv}{h^{\mrm{S6V}}}
\newcommand{\rrparen}{\Rparen}
\renewcommand{\th}{\textsuperscript{th}\xspace}
\colorlet{lightgray}{gray!50!white}
\DeclareMathAlphabet{\mathdutchcal}{U}{dutchcal}{m}{n}
\tikzset{use fpu reciprocal/.code={%
\def\pgfmathreciprocal@##1{%
    \begingroup
    \pgfkeys{/pgf/fpu=true,/pgf/fpu/output format=fixed}%
    \pgfmathparse{1/##1}%
    \pgfmath@smuggleone\pgfmathresult
    \endgroup
}}}%
\newcommand{\bb}[8]{
\xdef\y{#4}
\pgfmathsetseed{#7} 
\xdef\lst{}
\foreach \x [count=\n] in {1,...,#1}
{
    \pgfmathparse{\y + rand*#3} 
    \xdef\y{\pgfmathresult}
    \xdef\finaly{\pgfmathresult}
    \xdef\finaln{\n}
    \ifnum\n=1\relax
    \xdef\lst{{\x/\y}}
    \else
    \xdef\lst{\lst,{\x/\y}}
    \fi
}

\xdef\newlist{}
\xdef\trim{1}
\foreach \x/\y [count=\n] in \lst
{
\ifnum\n=\trim\relax
\xdef\newlist{{\x/\y}}
\fi
\ifnum\n>\trim\relax
\xdef\newlist{\newlist,{\x/\y}}
\fi
}

\draw[#6] (#8,#4)
\foreach \x/\y [count=\n] in \newlist
{
\pgfextra{\pgfmathsetmacro{\newy}{\y-((\n+\trim)/\finaln)*\finaly+(#5)*(\n+\trim)/\finaln}}
  -- (\x*#2+#8,\newy)
};
}
\newcommand{\bbpara}[8]{
\xdef\y{#4}
\pgfmathsetseed{#7} 
\xdef\lst{}
\foreach \x [count=\n] in {1,...,#1}
{
    \pgfmathparse{\y + rand*#3} 
    \xdef\y{\pgfmathresult}
    \xdef\finaly{\pgfmathresult}
    \xdef\finaln{\n}
%
%
    \ifnum\n=1\relax
    \xdef\lst{{\x/\y}}
    \else
    \xdef\lst{\lst,{\x/\y}}
    \fi
}

\xdef\newlist{}
\xdef\trim{1}
\foreach \x/\y [count=\n] in \lst
{
\ifnum\n=\trim\relax
\xdef\newlist{{\x/\y}}
\fi
\ifnum\n>\trim\relax
\xdef\newlist{\newlist,{\x/\y}}
\fi
}

\draw[#6] (#8,#4)
\foreach \x/\y [count=\n] in \newlist
{
\pgfextra{\pgfmathsetmacro{\newy}{\y-((\n+\trim)/\finaln)*\finaly+(#5)*(\n+\trim)/\finaln - ((\x-1)*#2)*((\x-#1)*#2)/(#1*#2)}}
  -- (\x*#2+#8,\newy)
};

}
\newcommand{\bbmelon}[9]{
\xdef\y{#4}
\pgfmathsetseed{#7} 
\xdef\lst{}
\foreach \x [count=\n] in {1,...,#1}
{
    \pgfmathparse{\y + rand*#3} 
    \xdef\y{\pgfmathresult}
    \xdef\finaly{\pgfmathresult}
    \xdef\finaln{\n}
    \ifnum\n=1\relax
    \xdef\lst{{\x/\y}}
    \else
    \xdef\lst{\lst,{\x/\y}}
    \fi
}

\xdef\newlist{}
\xdef\trim{1}
\foreach \x/\y [count=\n] in \lst
{
\ifnum\n=\trim\relax
\xdef\newlist{{\x/\y}}
\fi
\ifnum\n>\trim\relax
\xdef\newlist{\newlist,{\x/\y}}
\fi
}

\draw[#6] (#8,#4)
\foreach \x/\y [count=\n] in \newlist
{
\pgfextra{\pgfmathsetmacro{\sqrtx}{sqrt(\x)}}
\pgfextra{\pgfmathsetmacro{\newy}{\y-((\n+\trim)/\finaln)*\finaly+(#5)*(\n+\trim)/\finaln + #9*\sqrtx -0.007*\x}}
  -- (\x*#2+#8,\newy)
};
}
\newtheoremstyle{indented}{4pt}{4pt}{}{}{\bfseries}{.}{.5em}{}
\newtheorem{theorem}{Theorem}[section]
\newtheorem*{theorem*}{Theorem}
\newtheorem*{proposition*}{Proposition}
\newtheorem{proposition}[theorem]{Proposition}
\newtheorem*{corollary*}{Corollary}
\newtheorem{corollary}[theorem]{Corollary}
\newtheorem{lemma}[theorem]{Lemma}
\theoremstyle{definition}
\newtheorem{definition}[theorem]{Definition}
\newtheorem{remark}[theorem]{Remark}
\numberwithin{equation}{section}
\theoremstyle{indented}
\newtheorem{assumption}[theorem]{Assumption}
\title[KPZ fixed point convergence of the ASEP and stochastic six-vertex models]{KPZ fixed point convergence of the\\ ASEP and stochastic six-vertex models}
\author{Amol Aggarwal$^{1,2}$, Ivan Corwin$^1$, Milind Hegde$^1$}
\address{$^1$ Department of Mathematics, Columbia University, New York, NY USA. \newline $^2$ Clay Mathematics Institute, Denver, Colorado USA.}
\email{amolagga@gmail.com, ivan.corwin@gmail.com, mh4259@columbia.edu}
\newcommand{\intray}[1]{\llbracket #1,\infty\rrparen}
\begin{document}
\lineskiplimit=2pt

\begin{abstract}
We consider the stochastic six-vertex (S6V) model and asymmetric simple exclusion process (ASEP) under general initial conditions which are bounded below lines of arbitrary slope at $\pm\infty$. We show under Kardar-Parisi-Zhang (KPZ) scaling of time, space, and fluctuations that the height functions of these models converge to the KPZ fixed point.  Previously, our results were known in the case of ASEP (for a particular direction in the rarefaction fan) via a comparison approach \cite{quastel2022convergence}.
\end{abstract}

\maketitle

\setcounter{tocdepth}{1}
\tableofcontents

\section{Introduction}

Many models in the Kardar-Parisi-Zhang (KPZ) universality class can be thought of as dynamical systems driven by variants of space-time white noise. These include last passage percolation, polymer models, and the KPZ equation itself, as well as interacting particle systems and stochastic vertex models. The dynamical system perspective is intrinsically interesting but also produces several systems via projection procedures, such as ``colored'' models. A natural question in this area is to determine the scaling limit of the entire random dynamical system, i.e., of the joint scaling limit of the evolution across space, time, and initial conditions under the KPZ scaling exponents; more precisely, for a scaling parameter $\varepsilon>0$,  on scaling time as $\varepsilon^{-1}$, space as $\varepsilon^{-2/3}$, and the solutions' fluctuations as $\varepsilon^{-1/3}$.

For the models of last passage percolation, polymers, and the KPZ equation, the dynamical system is of a variational or Feynman-Kac form, in that it can be defined by maximizing or summing weights of paths through the space-time white noise environment. 
As a result, the evolution jointly at general times and under (multiple) general initial conditions is determined by the joint scaling limits of the solutions for delta initial condition (i.e., point to point passage times or partition functions), as the location of the delta varies. Since these solutions determine the general solutions (even in the prelimit), they can be regarded as fundamental solutions. Their putative universal scaling limit is the directed landscape $\cL:\{(x,s;y,t) : x,y,s,t\in\R, s<t\}\to\R$, where $\cL(x,s;y,t)$ is the scaling limit of the delta solution started at renormalized time $s$ from position $x$ and viewed at renormalized time $t$ at position $y$, and was identified and constructed for particular solvable models in \cite{dauvergne2018directed} and subsequent work such as \cite{dauvergne2021scaling,wu2023kpz}. Given the directed landscape, the scaling limit of the random dynamical system for such exactly solvable variational or Feynman-Kac type models  is
\begin{align}\label{e.scaling limit of dynamical system}
(\h_0, s, x,t)\mapsto \sup_{y\in\R}\Bigl(\h_0(y) + \cL(y,s;x,t)\Bigr),
\end{align}
where $\h_0 :\R\to\R\cup\{-\infty\}$ is a renormalized initial condition, $s\in\R$ represents the time from which the system evolves the initial condition, and $x\in\R$ and $t>s$ are the location and time at which the evolution is observed. 
The mapping that outputs the process $(x,t)\mapsto \sup_{y\in\R}(\h_0(y) + \cL(y,s;x,t))$ for a given initial state $\h_0:\R\to\R\cup\{-\infty\}$ is known as the \emph{KPZ fixed point}. It was first constructed in \cite{matetski2016kpz} by using exact determinantal formulas for the totally asymmetric simple exclusion process (TASEP), and it was later shown to be equivalent to the present definition in \cite{nica2020one}.

For models not of a variational or Feynman-Kac type, such as interacting particle systems and stochastic vertex models, the existence of a limiting fundamental solution in the sense that their limit exists and determines the limit of the dynamical systems as a whole as in \eqref{e.scaling limit of dynamical system} is not immediate. In fact, for initial conditions which are perturbations of a compact set of stationary initial conditions, a concise argument given here (Proposition~\ref{p.KPZ FP convergence via approximation bernoullis}) provides it (a similar argument was recently and independently given in \cite[Lemma 3.2]{dauvergne2024directed}). However, expanding to the full class of initial data (when the initial condition can grow linearly at $\pm\infty$) for which convergence has been shown in previous models such as TASEP is less clear. 

In our previous paper \cite{aggarwal2024scaling}, we determined the solutions which converge to the directed landscape, namely coupled step initial conditions, and showed that convergence for two models, the asymmetric simple exclusion process (ASEP) and the stochastic six-vertex model (S6V). That framework, which goes via structures known as colored Gibbsian line ensembles \cite{aggarwalborodin} and characterization results for the parabolic Airy line ensemble \cite{aggarwal2023strong,dimitrov2021characterization}, was developed for ASEP and S6V but should apply broadly to all integrable models satisfying the Yang-Baxter equation or their degenerations.

Here we present an approach for the same models of ASEP and S6V to show that the convergence of those special solutions (namely, under coupled step initial condition) implies convergence of the full dynamical solution (namely, under arbitrary coupled initial condition with linear growth at $\pm\infty$); in this way, these special solutions can be regarded as approximate fundamental solutions, recovering the full dynamical system in the scaling limit. 

Our arguments use the recently established convergence to the Airy sheet and directed landscape of the height functions of these models when started from coupled step initial conditions, along with monotonicity properties and previously known one-point convergence statements for stationary initial conditions of these models.  
As such, the proof gives a blueprint towards proving KPZ fixed point convergence for any model where such properties are known.

\subsection{Relation to recent work and main result} 

As mentioned, in models like ASEP and S6V which are not of a variational or Feynman-Kac type, the general initial condition cannot be expressed in an exact way in terms of the fundamental solution at the prelimiting level. In \cite[Appendix D]{aggarwal2024scaling}, we presented an argument showing that directed landscape convergence and distributional convergence to the KPZ fixed point from a general initial condition (i.e., for each single initial condition) together implies the scaling limit of the random dynamical system as a whole (i.e., to establish joint convergence of multiple general initial conditions and times); \cite{aggarwal2024scaling} gave such an argument for ASEP, combining the directed landscape convergence from the same paper with marginal KPZ fixed point convergence proved in~\cite{quastel2022convergence}. 

For S6V, while directed landscape convergence was proven in \cite{aggarwal2024scaling}, KPZ fixed point convergence was not known. A direct argument using directed landscape convergence, monotonicity properties, and known one-point convergence for stationary initial data establishes KPZ fixed point convergence for general initial conditions that lies below a line of fixed slope. Extending from compact perturbations to the most general class of initial conditions considered in the literature previously (namely those that grow at most linearly at $\pm\infty$) requires different ideas, which is the main purpose of our paper. An informal version of our main result is as follows.

\begin{theorem*}[Informal version of Theorem~\ref{t.main kpz fixed point convergence}]
Fix any parameters for ASEP or S6V corresponding to their rarefaction fans. Let $\smash{h_0^{(i)}}:\Z\to\Z$ be initial height functions for the models which, on KPZ rescaling, converge in the topology of local convergence in the space UC to upper semi-continuous functions growing at most linearly. Starting the $i$\th initial condition at rescaled time $s_i$ and evolving them jointly using the same randomness (i.e., the basic coupling), the rescaled height functions converge jointly to \eqref{e.scaling limit of dynamical system}, i.e, KPZ fixed points coupled via the directed landscape.
\end{theorem*}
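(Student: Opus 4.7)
The plan is to combine three inputs available to us: (i) directed landscape convergence for coupled step initial conditions established in \cite{aggarwal2024scaling}; (ii) monotonicity of ASEP and S6V under the basic coupling; and (iii) one-point KPZ fixed point convergence when starting from stationary (Bernoulli-product or analogous) initial data. The argument splits into two parts. The first, encapsulated by Proposition~\ref{p.KPZ FP convergence via approximation bernoullis}, establishes KPZ fixed point convergence when the initial condition is a compact perturbation of a stationary one, by combining directed landscape convergence with one-point stationary convergence and monotonicity. The second and novel part extends this to general initial conditions growing at most linearly at $\pm\infty$.

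For the extension, let $h_0$ be an initial height function whose rescaling converges to an upper semi-continuous $\h_0$ with $\h_0(y)\le C(1+|y|)$ and with asymptotic slopes $\sigma_\pm$ at $\pm\infty$. For each large $M>0$, I would construct two modified initial data $h_0^{M,+}$ and $h_0^{M,-}$ that agree with $h_0$ on a window of rescaled width $2M$ about the origin and are extended beyond by product initial conditions with densities chosen to match (or slightly exceed, resp.\ undershoot) the asymptotic slopes, yielding $h_0^{M,-}\le h_0\le h_0^{M,+}$ pointwise. By monotonicity under the basic coupling, the evolved height functions remain sandwiched. Since $h_0^{M,\pm}$ are compact perturbations of stationary initial data, Proposition~\ref{p.KPZ FP convergence via approximation bernoullis} applies and their rescalings converge to KPZ fixed points started from the upper and lower envelopes $\h_0^{M,\pm}$ of $\h_0$.

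The core technical step is to show that the gap between the upper and lower sandwich bounds, evaluated at a fixed rescaled space-time point $(x,t)$, vanishes in the iterated limit $\varepsilon\to 0$ followed by $M\to\infty$. In the limit this follows from the variational formula~\eqref{e.scaling limit of dynamical system} and the parabolic decay $\cL(y,s;x,t)\sim -(y-x)^2/(t-s)$: since $\h_0$ is at most linear, the argmax in $\sup_y(\h_0(y)+\cL(y,s;x,t))$ almost surely lies in a compact set, so once $M$ is large enough the two envelopes $\h_0^{M,\pm}$ produce the same supremum. I expect the main obstacle to be the prelimit counterpart of this localization, namely a quantitative bound showing that modifying the prelimit initial condition outside a rescaled window of size $M$ alters the rescaled height at a bounded spatial argument by an amount that tends to zero as $M\to\infty$ uniformly in $\varepsilon$. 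I would prove this via tail bounds for the rescaled height functions evolved from stationary data (available from tightness and marginal stationary convergence) combined with a basic coupling comparison between the two sandwich bounds. Once this prelimit localization is established, joint convergence at multiple initial conditions $h_0^{(i)}$ and starting times $s_i$ is essentially automatic: the basic coupling drives all prelimit height functions from a single source of randomness, the limiting directed landscape is a single random object governing all KPZ fixed points simultaneously, and the map in~\eqref{e.scaling limit of dynamical system} is jointly continuous in $(\h_0,s,x,t)$ under the local UC topology.
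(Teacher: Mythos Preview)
Your outline correctly identifies the three inputs and the role of Proposition~\ref{p.KPZ FP convergence via approximation bernoullis}, and your lower bound via monotonicity and the prelimiting variational inequality is essentially what the paper does. The gap is in the upper sandwich bound.

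You propose to dominate $h_0$ from above by $h_0^{M,+}$, a ``compact perturbation of stationary initial data'' to which Proposition~\ref{p.KPZ FP convergence via approximation bernoullis} applies. But a stationary (product) configuration has a \emph{single} density, hence a single asymptotic drift $\mu$; its rescaled height function behaves like $\mu x$ plus Brownian fluctuations and therefore tends to $-\infty$ in one direction. If the target initial condition satisfies $\h_0(x)\le\lambda(1+|x|)$ with growth in \emph{both} directions (the generic case under Assumption~\ref{as.master assumption}; think of $\h_0(x)=\lambda|x|$), no compact perturbation of a single-drift stationary profile can dominate it globally. So either your $h_0^{M,+}$ does not exist, or it must carry \emph{different} Bernoulli densities on the left and right tails---in which case it is no longer a compact perturbation of stationary data and Proposition~\ref{p.KPZ FP convergence via approximation bernoullis} does not apply to it.

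This is precisely the obstacle the paper isolates as the ``main part'' of the argument (Proposition~\ref{p.KPZ FP convergence for V shape bernoulli}): proving KPZ fixed point convergence for initial data that are Bernoulli with drift $+\lambda$ on one side and $-\lambda$ on the other. The mechanism is not a tail bound on stationary height functions as you suggest, but a discrepancy argument in the \emph{colored} model. One compares the V-shaped configuration to a same-drift configuration (to which Proposition~\ref{p.KPZ FP convergence via approximation bernoullis} does apply); the difference is encoded as low-color particles, and Lemma~\ref{l.overtaking} (from \cite{drillick2024stochastic}) bounds how many higher-color particles a low-color particle can overtake. Combined with the fact that both comparison systems converge to the same KPZ fixed point (so the number of ``buffer'' color-2 particles in the observation window is $o(\varepsilon^{-1/3})$), this forces the discrepancy particles to stay outside the window. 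Your sketch does not contain this idea, and the ``tail bounds plus basic coupling comparison'' you propose would not produce it: the point is exactly that the two sandwich bounds are \emph{not} both compact perturbations of the same stationary measure, so a direct stationary tail bound does not control their difference.
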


Our main result shows in particular that, for any given initial condition with any non-trivial macroscopic density, the height function evolution is that of the KPZ fixed point.
For ASEP around density $\frac{1}{2}$, this is the earlier mentioned result of \cite{quastel2022convergence}, where it was proven by comparison to TASEP; the class of initial conditions we cover is the same as that originally proved for TASEP \cite{matetski2016kpz}. \cite{quastel2022convergence}'s approach applies to more general exclusion processes and presumably can be implemented for more general densities; but this comparison method is unclear for S6V, and hence, even our marginal KPZ fixed point convergence result is new in that context.

The recent work \cite{dauvergne2024directed} provides a framework to show directed landscape convergence assuming KPZ fixed point convergence from narrow wedge initial conditions. Their method  applies (in conjunction with \cite{quastel2022convergence}) to models such as non-nearest neighbor asymmetric exclusion processes, while our approach includes the stochastic six vertex model (for which KPZ fixed point convergence was not known, but directed landscape convergence is proven in \cite{aggarwal2024scaling}).

\subsection{Brief discussion of approach}\label{s.proof ideas}
Our approach starts with the observation that a large class of initial conditions can be approximated with positive probability by a choice of random initial condition that is more accessible to analysis; here, we consider stationary initial condition, which is the Bernoulli product measure on the particle configuration. 

First, we use a one-sided inequality in the prelimit (relating prelimiting versions of both sides of \eqref{e.scaling limit of dynamical system}) coming from height monotonicity (Lemmas~\ref{l.asep height monotonicity} and \ref{l.s6v approximate height monotonicity}) and one-point convergence of stationary initial data \cite{aggarwal2018current} to obtain KPZ fixed point convergence for stationary initial data (Proposition~\ref{p.bernoulli convergence}), as well as to upgrade marginal convergence to the KPZ fixed point of multiple initial conditions to their joint convergence (Lemma~\ref{l.joint kpz fixed point convergence}). Next, we consider initial conditions that converge (after rescaling) to a continuous function on a compact interval and equal a random walk with fixed drift outside that interval. By using that the stationary initial condition's evolution packages together the evolution from a large collection of deterministic initial conditions, we obtain the convergence of such an initial condition's evolution to the KPZ fixed point (Proposition~\ref{p.KPZ FP convergence via approximation bernoullis}) by looking at appropriate parts of the probability space where the stationary initial condition approximates the given one.

The main part of our argument is upgrading from such initial conditions to ones where the random walks on either side of the compact interval have different drifts ($+\lambda$ on the right and $-\lambda$ on the left), which will be needed to upper bound arbitrary initial conditions which are bounded above by $\lambda(1+|x|)$ for some $\lambda>0$; the statement is Proposition~\ref{p.KPZ FP convergence for V shape bernoulli}.
Essentially, we compare the evolution of such an initial condition to one which has a random walk with the same drift $\lambda$ on both sides (for which we have already shown the convergence) and show that their discrepancy disappears in the rescaling. The discrepancy can be encoded as lower color particles in colored versions of these models. We control the motion of such particles and show that, with high probability, they do not go far enough to appear in any interval which is seen in the convergence to the KPZ fixed point. 
A version of such a discrepancy bound was proved in \cite{quastel2022convergence} for ASEP as a direct consequence of a coupling provided in \cite[Lemma B.1]{quastel2022convergence}. However, a coupling of that strength does not hold for the S6V, so we work with a more general coupling instead \cite[Corollary 3.2]{drillick2024stochastic}  and carefully track how lower priority particles can overtake higher priority ones.  

  We next prove KPZ fixed point convergence for a continuous initial condition which is bounded above by $\lambda(1+|x|)$ for some $\lambda>0$ (Proposition~\ref{p.main result in continuous case}) using height monotonicity, the earlier mentioned lower bound by a prelimiting variational problem, and an upper bound from the previous paragraph. 
   A similar approximation argument then finally allows us to extend to upper semi-continuous functions satisfying the same bound $\lambda(1+|x|)$, which is the class of initial conditions for which convergence to the KPZ fixed point is known for TASEP \cite{matetski2016kpz}.

\subsection*{Notation}
For $a,b\in\Z$ with $a<b$, $\intint{a,b} := \{a, \ldots, b\}$. 
For random objects $X$ and $Y$ taking values in some measurable space, $X\stackrel{\smash{d}}{=} Y$ means that their distributions are the same; we will often omit explicitly specifying the $\sigma$-algebra on the target space, but will specify its topology (in which case we endow the space with the associated Borel $\sigma$-algebra). For random objects $X_n$ and $X$ taking values in some common topological space, $\smash{X_n\xrightarrow{d} X}$ means that $X_n$ converges weakly to $X$; the topological space will be specified or obvious in the context. The space of continuous functions from a topological space $\mc X$ to $\R$ will be denoted $\mc C(\mc X, \R)$. Events will be written in sans serif font, e.g., $\msf E$, and $\msf E^c$ will denote the complement of $\msf E$.
We will sometimes write $f(\bm\cdot)$ for a function of the variable $\bm\cdot$, and sometimes $x\mapsto f(x)$.

\subsection*{Organization of paper} 
We define our models and state the main result in Section~\ref{s.models and result}. In Section~\ref{s.preliminaries} we collect some useful properties and definitions for the models under study, as well as of the KPZ fixed point. In Section~\ref{s.convergence under bernoulli} we prove convergence to the KPZ fixed point under Bernoulli initial condition. In Section~\ref{s.general one-sided initial condition} we upgrade this to establish the same convergence under general continuous initial conditions which are Bernoulli outside of a compact set, and use the latter in Section~\ref{s.proofs of main theorems} to prove Theorem~\ref{t.main kpz fixed point convergence}. Proofs of miscellaneous statements not proved in the main text are provided in Appendix~\ref{app.other proofs}.

\subsection*{Acknowledgements}
Amol Aggarwal was partially supported by a Packard Fellowship for Science and Engineering, a Clay Research Fellowship, by the NSF through grant DMS-1926686, and by the IAS School of Mathematics.
Ivan Corwin was partially supported by the NSF through grants DMS-1937254, DMS-1811143, DMS-1664650, by the Simons Foundation through an Investigator Award and through the W.M.~Keck Foundation through a Science and Engineering Grant.
Milind Hegde was partially supported by the NSF through grants DMS-1937254 and DMS-2348156.

\section{Models and main result}\label{s.models and result}

In this section we define our models and state the precise form of our main result, Theorem~\ref{t.main kpz fixed point convergence}. We define ASEP in Section~\ref{s.asep definition} and S6V in Section~\ref{s.s6v definition}, and give the form of the rescaling of the height functions in Section~\ref{s.height function scaling}. In Section~\ref{s.landscape and fixed point} we define the directed landscape and KPZ fixed point and in Section~\ref{s.main result} we state our main result.

\subsection{The asymmetric simple exclusion process}\label{s.asep definition}

Fix a real number $q\in[0,1)$. Consider an initial particle configuration $\eta_0\in\{0,1\}^{\Z}$, where $\eta(x)=1$ indicates the presence of a particle at $x$ and $\eta(x) = 0$ indicates the absence of a particle at $x$ (equivalently, presence of a hole). The dynamics of ASEP are as follows.

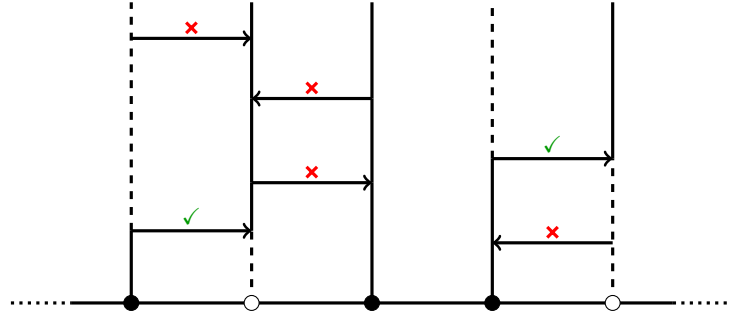
\begin{figure}[h]
\begin{tikzpicture}[scale=1.6]

\draw[line width=1.2pt] (-0.5,0) -- ++ (5,0);
\draw[line width=1.2pt, dotted] (-1,0) -- ++(0.5,0);
\draw[line width=1.2pt, dotted] (4.5,0) -- ++(0.5,0);

\draw[->, line width=1.2pt] (1,1) --node[above=2pt, cross out, inner sep=2pt, outer sep=0pt, draw=red, minimum size=4pt, line width=1.2pt]{} ++(1,0);

\draw[->, line width=1.2pt] (2,1.7) --node[midway, above=2pt, cross out, inner sep=2pt, outer sep=0pt, draw=red, minimum size=4pt, line width=1.2pt]{} ++(-1,0);
\draw[->, line width=1.2pt] (0,0.6) --node[midway, green!60!black, above=-1pt, scale=0.8]{$\bm{\checkmark}$} ++(1,0);
\draw[->, line width=1.2pt] (4,0.5) --node[midway, above=2pt, cross out, inner sep=2pt, outer sep=0pt, draw=red, minimum size=4pt, line width=1.2pt]{} ++(-1,0);
\draw[->, line width=1.2pt] (3,1.2) --node[midway, green!60!black, above=-1pt, scale=0.8]{$\bm{\checkmark}$} ++(1,0);
\draw[->, line width=1.2pt] (0,2.2) --node[midway, above=2pt, cross out, inner sep=2pt, outer sep=0pt, draw=red, minimum size=4pt, line width=1.2pt]{} ++(1,0);

\draw[line width=1.2pt, dashed] (1,0) -- (1,0.6);
\draw[line width=1.2pt, dashed] (0,0.6) -- (0,2.5);

\draw[line width=1.2pt, black] (0,0) -- (0,0.6);
\draw[line width=1.2pt, black] (1,0.6) -- (1,1);
\draw[line width=1.2pt, black] (2,1) -- (2,1.7);
\draw[line width=1.2pt, black] (1,1.7) -- (1,2.5);

\draw[line width=1.2pt, black] (2,0) -- (2,1);
\draw[line width=1.2pt, black] (1,1) -- (1,1.7);
\draw[line width=1.2pt, black] (2,1.7) -- (2,2.5);

\draw[line width=1.2pt, black] (3,0) -- (3,1.2);
\draw[line width=1.2pt, black] (4,1.2) -- (4,2.5);

\draw[line width=1.2pt, dashed] (4,0) -- (4,1.2);
\draw[line width=1.2pt, dashed] (3,1.2) -- (3,2.5);

\foreach \x/\thecolor in {0/black, 1/white, 2/black, 3/black, 4/white}
{
\node[circle, fill, inner sep = 2pt, \thecolor, draw=black] at (\x,0) {};
}
\end{tikzpicture}
\caption{A depiction of the graphical construction of ASEP. At the bottom is the initial particle configuration. The arrows indicate the times where a left clock $\xi^{\mrm{L}}_x$ or right clock $\xi^{\mrm{R}}_x$ rang. If an arrow goes from site $x$ to site $y$ at time $t$ and there is a particle at $x$ and no particle at site $y$ at time $t^-$, the particle at site $x$ moves to $y$ at time $t$ (indicated by a green check mark); if not, nothing happens (red cross).}\label{f.graphical construction}
\end{figure}

\subsubsection{Dynamics}\label{s.asep dynamics} We fix a family of independent Poisson clocks $\bm \xi = \{\xi_x^{\mrm{L}}, \xi_x^{\mrm{R}}\}_{x\in\Z}$, where $\xi_x^{\mrm{L}}$ has rate $q$ and $\xi_{x}^{\mrm{R}}$ has rate 1 for all $x\in\Z$. With the clocks fixed, the dynamics are deterministic (see Figure~\ref{f.graphical construction}): when the clock $\xi_x^{\mrm{L}}$ rings, if there is a particle at site $x$ at that time, it attempts a jump to the first site to the left, and when the clock $\xi_x^{\mrm{R}}$ rings, if there is a particle at site $x$ at that time, it attempts a jump by one site to the right. The attempt succeeds if there is no particle at the target site. The well-definedness of this prescription follows from \cite[Section 10]{harris1978additive}.

\subsubsection{Basic coupling for ASEP}

The above graphical construction of ASEP gives a natural way to couple together the evolution of ASEP from all choices of initial conditions simultaneously---this is known as the \emph{basic coupling}.

Consider countably many initial conditions $\eta_0^{(1)}, \eta_0^{(2)}, \ldots \in \{0,1\}^\Z$, where $\eta_0^{(j)}(i)$ represents the state at location $i$ in the $j$\th initial condition, with $0$ for a hole and $1$ for a particle. To define the basic coupling, as in Section~\ref{s.asep dynamics}, we fix a family of independent Poisson clocks $\bm \xi = \{\xi_x^{\mrm{L}}, \xi_x^{\mrm{R}}\}_{x\in\Z}$. Then we evolve $\smash{\eta_0^{(j)}}$ for each $j$ according to the dynamics described above, where we use the same family of clocks $\bm \xi$ for all $j\in\N$. For example, if $\xi_x^{\mrm{R}}$ rings at time $t$, then the particle at location $x$ in the configuration $\smash{\eta^{(j)}_{t^-}}$ (if there is one) attempts a jump to the right for each $j\in\N$.

We also note that we may consider the same evolution, using the fixed Poisson clocks, of initial conditions started at different times, i.e., for given times $s_i>0$ and initial configurations $\smash{\eta_0^{(i)}}$ for $i=1,2, \ldots, $ we set the configuration at time $s_i>0$ to be $\smash{\eta_0^{(i)}}$ and apply the same dynamics. In this way the basic coupling gives a coupling of initial conditions started at different times as well.

\subsubsection{Evolution of height function for ASEP}\label{s.asep height function}

\begin{definition}[Height function]\label{d.bernoulli function}
Let $\Lambda\subseteq \Z$ be a (possibly infinite) interval. We say $\gamma:\Lambda\to\Z$ is a \emph{height function} if $\gamma(x+1) - \gamma(x)\in\{0,-1\}$ for all $x$ with $x, x+1\in\Lambda$.
\end{definition}

Given a height function $h_0:\Z\to\Z$, the associated particle configuration $\eta_{h_0}\in \{0,1\}^{\Z}$ is defined, for all $x\in\Z$, by
\begin{align*}
\eta_{h_0}(x) = h_0(x-1) - h_0(x).
\end{align*}

\begin{figure}[t]
\begin{tikzpicture}[scale=0.85]
\draw[thick] (0,0) -- ++(2,0) -- ++(2,-2) -- ++(1,0) -- ++ (1,0) -- ++(1,-1) -- ++(2,0);

\draw[thick] (0,-4.5) -- ++(9.4,0);

\foreach \thecolor [count=\x] in {white, white, black, black, white, white, black, white, white}
\node[circle, fill=\thecolor, inner sep = 2pt, draw=black] at (\x,-4.5) {};

\draw[->, dashed, semithick] (7, -4.2) to[out=40, in=140] (8,-4.2);

\draw[->, dashed, semithick] (3, -4.2) to[out=140, in=40] (2,-4.2);

\draw[thick, dashed] (1,0) -- ++(1,-1) -- ++(1,0);
\draw[thick, dashed] (6,-2) -- ++(1,0) -- ++(1,-1);
\end{tikzpicture}
\caption{A depiction of a height function $h_0$ and the associated particle configuration $\eta_{h_0}$. The dashed portions of the path illustrate the definition of the new height function, under the corresponding displayed particle movements.}\label{f.general height function}
\end{figure}
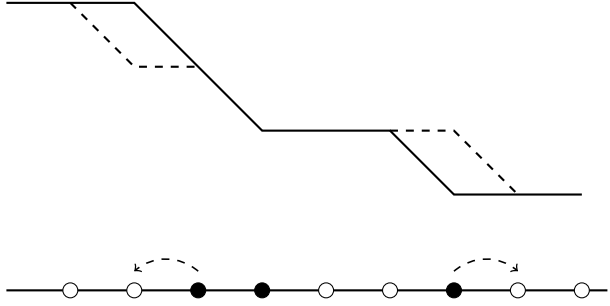

For a function $f$ with left limits, we use the notation $f(r^-) := \lim_{s\shortuparrow r} f(s)$; note that this does not require $f(r)$ to be defined.  For $h_0:\Z\to\Z$ a height function, define the height function $(y,t)\mapsto h^{\mrm{ASEP}}(h_0; y,t)$ for $t>0$ and $y\in\Z$ as follows (see also Figure~\ref{f.general height function}). 

Start ASEP under basic coupling with initial particle configuration $\eta_{h_0}$. Whenever a particle jumps from $y$ to $y+1$ at time $r>0$, define $h^{\mrm{ASEP}}(h_0; y,r) = h^{\mrm{ASEP}}(h_0; y,r^-) + 1$ (and $h^{\mrm{ASEP}}(h_0; x,r) = h^{\mrm{ASEP}}(h_0; x,r^-)$ for all $x\neq y$), and whenever a particle jumps from $y$ to $y-1$ at time $r$, define $h^{\mrm{ASEP}}(h_0; y-1, r) = h^{\mrm{ASEP}}(h_0; y-1,r^-)-1$ (and $h^{\mrm{ASEP}}(h_0; x,r) = h^{\mrm{ASEP}}(h_0; x,r^-)$ for all $x\neq y-1$).
The utility of the definition of $h^{\mrm{ASEP}}(h_0; y,t)$ is that, while the ASEP dynamics only sees the discrete derivative of $h_0$ (through $\eta_{h_0}$), here we keep track of the constant shift in $h_0$ as well.

For multiple initial conditions $(\smash{h_0^{(j)}})_{j=1}^\infty$, we couple the height functions $(y,t)\mapsto h^{\mrm{ASEP}}(\smash{h_0^{(j)}}; y,t)$ using the basic coupling for the underlying ASEP dynamics. If the initial condition $h_0$ starts at a non-zero time $s>0$, we denote the height functions at time $t>s$ by $h^{\mrm{ASEP}}(h_0,s; \bm\cdot, t)$ and its evolution is as in the previous paragraph. One can also start multiple initial conditions at different times, and couple the evolutions by the basic coupling.%

\subsection{The stochastic six-vertex model}\label{s.s6v definition}

The stochastic six-vertex model (S6V) we consider is defined on the $\Z_{\geq 1}\times \Z$. The model has two fixed parameters: an asymmetry parameter $q\in[0,1)$ and a direction parameter $b^{\shortrightarrow}\in(0,1)$.

\subsubsection{Configurations and weights} \label{s.s6v configuration}

A configuration of the model is described as follows. First, an \emph{arrow configuration} is a tuple $(a, i; b, j)$ such that $i, j, a, b \in \{0,1\}$ and $i+ a = j + b$; see the left panel of Figure~\ref{f.colored S6V}. The variables $i$, $a$, $j$, and $b$ indicate the presence or absence  of the arrows horizontally entering, vertically entering, horizontally exiting, and vertically exiting the vertex, respectively. The condition $i+a = j+b$ is called \emph{arrow conservation}, i.e., the number of incoming arrows equals the number of outgoing arrows.

A configuration of the model consists of an assignment of arrow configurations, one to each vertex $v$ of $\Z_{\geq 1}\times \Z$. We denote the arrow configuration at $v$ by $(a_v, i_v; b_v, j_v)$. We require that the arrow configurations are \emph{consistent}: $i_{(x,y)} = j_{(x-1,y)}$ and $a_{(x,y)} = b_{(x,y-1)}$, i.e., an arrow is horizontally entering at $(x,y)$ if and only if an arrow is horizontally exiting at $(x-1,y)$, and similarly an arrow is vertically entering at $(x,y)$ if and only if an arrow is vertically exiting at $(x,y-1)$. 
One consequence of this description is that the arrows form up-right paths; see the right panel of Figure~\ref{f.colored S6V}.

\begin{figure}[t]
\begin{tikzpicture}[scale=0.7]

\begin{scope}[shift={(-7.5,1.5)}, scale=0.9]

\draw[line width=1pt] (0,0) -- ++(3,0);
\draw[line width=1pt] (1.5,-1.5) -- ++(0,3);

\node[anchor = east] at (0,0) {$i_v$};
\node[anchor = west] at (3,0) {$j_v$};
\node[anchor = north] at (1.5,-1.5) {$a_v$};
\node[anchor = south] at (1.5,1.5) {$b_v$};
\end{scope}

\draw[gray, dashed] (0,-1) grid (4,4.4);
\draw[black, thick] (0,4.4) -- (0,-1) -- (4,-1);

\draw[very thick, ->] (0,0) -- ++(2,0) -- ++(0,1) -- ++(2,0);

\draw[very thick, ->] (0,1) -- ++(1,0) -- ++(0,1) -- ++(2,0) -- ++(0,1) -- ++(1,0);

\draw[very thick, ->] (0,3) -- ++(1,0) -- ++(0,1.4);

\draw[very thick, ->] (0,2) -- ++(1,0) -- ++(0,1) -- ++(1,0) -- ++(0,1) -- ++(2,0);

\draw[very thick, ->] (0,4) -- ++(2,0)-- ++(0,0.4);


\foreach \x in {-2,...,2}
{
  \node[anchor=east, scale=0.8] at (0,\x+2) {$\x$};
}

\node[circle, fill, inner sep=1pt] at (0,-1) {};
\node[anchor=east, scale=0.8] at (0,-1) {$(0,-3)$};


\end{tikzpicture}
\caption{Left: a depiction of the labels for the arrow configuration at a vertex $v$. Right: A sample configuration of the stochastic six-vertex model with the step initial condition, i.e., $\eta_0(k) = 1$ for all $k\geq k_0$ for some $k_0$ (in the depiction, $k_0=-2$) and $\eta_0(k) = 0$ otherwise. }\label{f.colored S6V}
\end{figure}
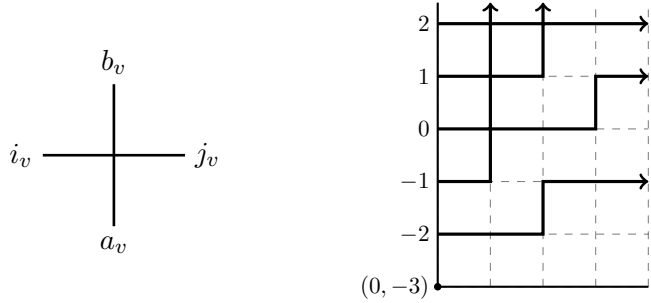

We turn now to initial conditions for the model, starting with the case that the initial condition is not bi-infinite. Fix $N\in\N$. The S6V model with \emph{initial condition} $\eta_0:\llbracket -N,\infty\rrparen\to\{0,1\}$ is specified by setting an arrow to enter horizontally at $(1, k)$ whenever $\eta_0(k) = 1$ (i.e., $i_{(1,k)} = \eta_0(k)$) for $k \in \llbracket -N,\infty\rrparen$, and no arrow  enters vertically at $(\ell, -N)$ (i.e., $a_{(\ell,-N)} = 0$) for $\ell \in \mathbb{N}$; see the right panel of Figure~\ref{f.colored S6V}. Next we explain how to sample a configuration with such a initial condition; this prescription can be extended to the case of initial conditions defined on $\Z$, which we will indicate shortly.

\begin{figure}[b]
\begin{tikzpicture}[scale=0.8]
\newcommand{\thedim}{4.2}
\begin{scope}[scale=0.65]

      \draw[step=\thedim] (0,0) grid (6*\thedim, \thedim);

      \draw[line width=1pt, dashed, gray] (0.5*\thedim, 0.25*\thedim) -- ++(0, 0.5*\thedim);
      \draw[line width=1pt, dashed, gray] (0.25*\thedim, 0.5*\thedim) -- ++(0.5*\thedim, 0);

      \begin{scope}[shift={(\thedim,0)}]
            \draw[->, line width=1.2pt, black] (0.5*\thedim, 0.25*\thedim) -- ++(0, 0.5*\thedim);
            \draw[->, line width=1.2pt, black] (0.25*\thedim, 0.5*\thedim) -- ++(0.5*\thedim, 0);

            \draw[line width=1pt, gray, dashed] (1.25*\thedim, 0.5*\thedim) -- ++(0.5*\thedim, 0);
            \draw[->, line width=1.2pt, black] (1.5*\thedim, 0.25*\thedim) -- ++(0, 0.5*\thedim);

            \draw[line width=1pt, gray, dashed] (2.5*\thedim, 0.25*\thedim) -- ++(0, 0.5*\thedim);
            \draw[->, line width=1.2pt, black] (2.25*\thedim, 0.5*\thedim) -- ++(0.5*\thedim, 0);

            \draw[line width=1pt, gray, dashed] (3.25*\thedim, 0.5*\thedim) -- ++(0.25*\thedim, 0) -- ++(0,0.25*\thedim);
            \draw[->, line width=1.2pt, black] (3.5*\thedim, 0.25*\thedim) -- ++(0, 0.25*\thedim) -- ++(0.25*\thedim, 0);

            \draw[line width=1pt, gray, dashed] (4.5*\thedim, 0.25*\thedim) -- ++(0, 0.25*\thedim) -- ++(0.25*\thedim, 0);
            \draw[->, line width=1.2pt, black] (4.25*\thedim, 0.5*\thedim) -- ++(0.25*\thedim, 0) -- ++(0,0.25*\thedim);
            
      \end{scope}

      \draw[xstep=\thedim, ystep=0.35*\thedim] (0,0) grid (6*\thedim, -0.35*\thedim);

      \newcommand{\they}{-0.175*\thedim}

      \node[scale=1] at (0.5*\thedim, \they) {$1$};

      \begin{scope}[shift={(\thedim,0)}]

      \node[scale=1] at (0.5*\thedim, \they) {$1$};
      \node[scale=1] at (1.5*\thedim, \they) {$qb^{\shortrightarrow}$};
      \node[scale=1] at (2.5*\thedim, \they) {$b^{\shortrightarrow}$};
      \node[scale=1] at (3.5*\thedim, \they) {$1-qb^{\shortrightarrow}$};
      \node[scale=1] at (4.5*\thedim, \they) {$1-b^{\shortrightarrow}$};
      \end{scope}

     \end{scope}
\end{tikzpicture}
\caption{The vertex weights for the S6V model.}
\label{f.R weights}
\end{figure}

\subsubsection{Dynamics}\label{s.s6v dynamics} The probability measure on configurations is defined by the following Markovian sampling procedure. Start at the vertex $(1,-N)$. Here there is a horizontally incoming arrow determined by the initial condition $\eta_0$ and no vertically entering arrow. The outgoing arrow configuration is determined by choosing each of the two possible configurations (whether the horizontally incoming arrow exits vertically or horizontally) with probability equal to the vertex weight read from Figure~\ref{f.R weights}; this uses that the vertex weights are \emph{stochastic}, i.e., non-negative and for a given incoming arrow configuration, the sum  of the possible (i.e., satisfying arrow conservation) outgoing arrow configurations' weights is 1. We then proceed iteratively in a Markovian fashion: if the outgoing arrow configuration at all vertices in the domain $\mc D_n=\{(x,y)\in\Z_{\geq 1}\times\llbracket-N,\infty\rrparen: x+y < n\}$ has been determined, the configuration for vertices on the line $x+y=n$ is determined by picking the outgoing arrow configuration at each vertex independently with probability equal to that vertex's weight; note that the incoming arrows configuration at each such vertex is already determined by the requirement of consistency of the overall configuration, and that the order in which the outgoing arrow configurations are sampled does not matter. The particle configuration at a later time $t\in\N$ is denoted $\eta_t:\llbracket -N,\infty\rrparen\to\{0,1\}$ and defined by
\begin{align*}
\eta_t(x) = j_{(t,x)}.
\end{align*}

As mentioned, the above sampling is for an initial condition which is finite on one side, i.e., $\eta_0$ is defined on $\llbracket -N, \infty\rrparen$ for some $N\in\N$ rather than $\Z$. The well-definedness of the model for initial conditions defined on $\Z$ follows by a method similar to that of Harris \cite{harris1978additive} used for the well-definedness of ASEP; we refer the reader to \cite[Section 2.1]{aggarwal2020limit} for more details.

\subsubsection{The basic coupling for S6V}\label{s.s6v basic coupling}

We next give an analog of ASEP's basic coupling for S6V, which again corresponds to using the same randomness for the evolution of different initial conditions.
Consider finitely many initial conditions $\eta_0^{(1)}, \ldots, \eta_0^{(K)}\in \{0,1\}^{\Z}$ where $\eta_0^{(j)}(x)$ represents the presence or absence of a horizontal arrow entering at location $(1,x)$, i.e., $\smash{i_{(1,x)} = \eta_0^{(j)}(x)}$. 

 We start with a collection of independent $0$-$1$ Bernoulli random variables $\{\smash{X^{\shortuparrow}_{v}, X^{\shortrightarrow}_{v}\}_{v\in\Z_{\geq 1}\times\Z}}$, where $\P(\smash{X^{\shortuparrow}_{v}} = 1) = qb^{\shortrightarrow}$ and $\P(\smash{X^{\shortrightarrow}_{v}} = 1) = b^{\shortrightarrow}$ for each $\smash{v\in\Z_{\geq 1}\times\Z}$.
Given this randomness, the configuration is specified deterministically from the initial condition as follows. For any vertex $(x,y)\in\Z_{\geq 1}\times\Z$ such that the incoming arrow configuration has been assigned to $(x,y)$ (by the assignment of outgoing arrow configurations to $(x-1, y)$ and $(x,y-1)$), let $I_{\shortrightarrow}, I_{\shortuparrow}\in\{0,1\}$ respectively be $i_{(x,y)}$ and $a_{(x,y)}$, the indicator function of the horizontally and vertically incoming arrows at $(x,y)$ respectively. Then the horizontally and vertically outgoing arrows $O_{\shortrightarrow}, O_{\shortuparrow}\in\{0,1\}$ from $(x,y)$ (i.e., $j_{(x,y)}$ and $b_{(x,y)}$)  are defined by:
\begin{enumerate}
  \item If $I_{\shortrightarrow} = I_{\shortuparrow}$, then $O_{\shortrightarrow} = O_{\shortuparrow} = I_{\shortrightarrow}= I_{\shortuparrow}$.

  \item If $(I_{\shortrightarrow}, I_{\shortuparrow})= (0,1)$, then $(O_{\shortrightarrow}, O_{\shortuparrow}) = (0,1)$ if $X^{\shortuparrow}_{(x,y)} = 1$ and $(O_{\shortrightarrow}, O_{\shortuparrow}) = (1,0)$ if $X^{\shortuparrow}_{(x,y)} = 0$.

  \item If $(I_{\shortrightarrow}, I_{\shortuparrow})= (1,0)$, then $(O_{\shortrightarrow}, O_{\shortuparrow}) = (1,0)$ if $X^{\shortrightarrow}_{(x,y)} = 1$ and $(O_{\shortrightarrow}, O_{\shortuparrow}) = (0,1)$ if $X^{\shortrightarrow}_{(x,y)} = 0$.
\end{enumerate}

\subsubsection{Evolution of height function for S6V}\label{s.s6v height function}
Next we define the S6V height function when started from a general initial height function.  Let  $h_0:\Z\to \Z$ be a height function. As in ASEP, we define an associated arrow configuration $\eta_{h_0}\in\{0,1\}^{\Z}$ by
\begin{align}\label{e.s6v particle from h}
\eta_{h_0}(x) := h_0(x-1) - h_0(x).
\end{align}

Now, for such a height function $h_0$, consider the S6V model with initial condition $\eta_{h_0}$, and let $j_{(x,y)}$ be the arrow exiting horizontally from $(x,y)$. Define the height function $(y,t)\mapsto \hssv(h_0; y,t)$ for $t\in\N$ and $y\in \Z$ iteratively as follows. First set $\hssv(h_0; y,0) = h_0(y)$ for all $y\in\Z$. Informally speaking, for $t\in\N$, $\hssv(h_0; y,t)$ is $\hssv(h_0; y,t-1)$ plus the number of arrows that exited horizontally at a location at or below height $y$ at time $t-1$ and above $y$ at time $t$. Since at most one arrow can occupy an edge, the latter number is either 0 or 1, and is 1 if and only if an arrow exits vertically from $(t,y)$. More formally, define, for $t\in\N$ and $y\in\Z$,
\begin{equation}\label{e.s6v height function gen initial condition}
\hssv(h_0; y,t) := \hssv(h_0; y,t-1) + b_{(t,y)}.
\end{equation}

For multiple initial conditions $(h_0^{(k)})_{k=1}^K$, the height functions $(y,t)\mapsto\smash{\hssv(h_0^{(k)}; y,t)}$ are coupled together via the basic coupling for S6V. As in ASEP, we can associate each initial condition with a starting time $s_k\in\N$ by using the evolution as described in Section~\ref{s.s6v basic coupling} from $s_i$ onward (in particular, coupled via the basic coupling) and define the height functions (now denoted $\smash{\hssv(h_0^{(k)}, s_k; y,t)}$) by \eqref{e.s6v height function gen initial condition} for $t>s_k$.

\begin{remark}
If the particle configuration associated to $h_0$ has only finitely many particles, the definition \eqref{e.s6v height function gen initial condition} of $h^{\mrm{S6V}}(h_0; y,t)$ can be interpreted as the number of arrows exiting horizontally from the infinite ray $\{(t,z): z>y\}$, up to a constant global shift (equaling $\lim_{y\to\infty} h_0(y)$). A constant global shift of $k$ can in turn be interpreted as the system having $k$ arrows at height ``$\infty$'' at all times. These observations provide a link between the definition of the S6V height function given here and that given in \cite[eq. (2.22)]{aggarwal2024scaling}, where all initial configurations have finitely many arrows and the S6V height function is precisely the number of arrows exiting horizontally above a point.
\end{remark}

\subsection{Height function scaling}\label{s.height function scaling}

\subsubsection{ASEP} Fix $q\in[0,1)$ and a velocity $\alpha\in(-1,1)$ in the rarefaction fan. Let $\mu^{\mrm{ASEP}}(\alpha)$ and $\sigma^{\mrm{ASEP}}(\alpha)$ be defined by
\begin{align}\label{e.mu sigma ASEP}
\mu(\alpha) = \mu^{\mrm{ASEP}}(\alpha) := \tfrac{1}{4}(1-\alpha)^2 \qquad\text{and}\qquad \sigma(\alpha) = \sigma^{\mrm{ASEP}}(\alpha) :=\tfrac{1}{2}(1-\alpha^2)^{2/3},
\end{align}
and let the spatial scaling factor $\beta^{\mrm{ASEP}}(\alpha)$ be given by
\begin{align}\label{e.nu ASEP}
\beta(\alpha)=\beta^{\mrm{ASEP}}(\alpha) := \frac{2\sigma(\alpha)^2}{|\mu'(\alpha)|(1-|\mu'(\alpha)|)} = 2(1-\alpha^2)^{1/3}.
\end{align}

\begin{definition}[Rescaled height functions for ASEP]\label{d.rescaled ASEP height function}
 Given a sequence of initial height functions $h_0^\varepsilon:\Z\to\Z$ indexed by $\varepsilon>0$, we define $\h_0^\varepsilon:\R\to\R$ by setting, for all $x$ such that the argument of $h_0^\varepsilon$ is an integer,
\begin{align}\label{e.rescaled h for ASEP}
\h_0^\varepsilon(x) = \sigma(\alpha)^{-1}\varepsilon^{1/3}\left(\mu'(\alpha)\beta(\alpha)x\varepsilon^{-2/3} - h_0^\varepsilon(\beta(\alpha)x\varepsilon^{-2/3})\right),
\end{align}
and the values of $\h_0^\varepsilon$ at all other $x$ determined by linear interpolation. Note that the dependence of $\h_0^\varepsilon$ on $\alpha$ is not explicit in the notation.

Let $\gamma = 1-q$. We define the rescaled ASEP height function by
\begin{align*}
\MoveEqLeft[16]
\h^{\mrm{ASEP},\varepsilon}(\h_0^\varepsilon; x,t)
= \sigma(\alpha)^{-1}\varepsilon^{1/3}\Bigl(2\mu(\alpha)t\varepsilon^{-1} + \mu'(\alpha)\beta(\alpha)x\varepsilon^{-2/3}\\
&- h^{\mrm{ASEP}}(h_0^\varepsilon; 2\alpha t\varepsilon^{-1} + \beta(\alpha)x\varepsilon^{-2/3}, 2\gamma^{-1}\varepsilon^{-1}t)\Bigr)
\end{align*}
for all $x$ such that the argument of $h^{\mrm{ASEP}}(h_0;\bm\cdot, 2\gamma^{-1}\varepsilon^{-1}t)$ is an integer, and by linear interpolation elsewhere. Here $\h_0^\varepsilon$ and $h_0^\varepsilon$ are related by \eqref{e.rescaled h for ASEP}.
\end{definition}

The source of these scaling coefficients is explained in \cite[Section 2.2.4]{aggarwal2024scaling} and the interested reader is referred there for more details.

\subsubsection{S6V} Fix $q\in[0,1)$, $b^{\shortrightarrow}\in (0,1)$. Let $z= \frac{1-b^{\shortrightarrow}}{1-qb^{\shortrightarrow}}\in(0,1)$ and fix a velocity $\alpha \in (z,z^{-1})$ (which corresponds to the full rarefaction fan). Define $\mu(\alpha)$ and $\sigma(\alpha)$ by
\begin{equation}\label{e.mu and sigma}
\begin{split}
\mu(\alpha) &= \mu^{\mrm{S6V}}(\alpha) := - \frac{(\sqrt{\alpha} - \sqrt{z})^2}{1-z} \quad\text{and}\\
\sigma(\alpha) &= \sigma^{\mrm{S6V}}(\alpha) :=\frac{\alpha^{-1/6}z^{1/6}(1-\sqrt{z\alpha})^{2/3}(\sqrt{\alpha}-\sqrt{z})^{2/3}}{1-z};
\end{split}
\end{equation}
Next define the spatial scaling factor $\beta(\alpha) = \beta^{\mrm{S6V}}(\alpha)$ by
\begin{equation}\label{e.S6V spatial scaling}
\beta(\alpha) = \beta^{\mrm{S6V}}(\alpha) := \frac{2\sigma(\alpha)^2}{|\mu'(\alpha)|(1-|\mu'(\alpha)|)}.
\end{equation}

\begin{definition}[Rescaled height functions for S6V]\label{d.rescaled S6V height function}
Given a sequence of initial height functions $h_0^\varepsilon: \Z\to\Z$ indexed by $\varepsilon>0$, we define $\h_0^\varepsilon:\R\to\R$ by setting, for all $x$ such that the argument of $h_0^\varepsilon$ is an integer,
\begin{equation}\label{e.rescaled h for S6V}
\begin{split}
\MoveEqLeft[13]
\h_0^\varepsilon(x) = \sigma(\alpha)^{-1}\varepsilon^{1/3}\Bigl(h_0^\varepsilon(\beta(\alpha)x\varepsilon^{-2/3})  - \mu'(\alpha)\beta(\alpha)x\varepsilon^{-2/3} \Bigr),
\end{split}
\end{equation}
and the values of $\h_0^\varepsilon$ at all other $x$ determined by linear interpolation. Note that the dependence of $\h_0^\varepsilon$ on $\alpha$, $q$, and $z$ is not explicit in the notation.

We also define the rescaled S6V height function, for $t \in (0,\infty)$, by 
\begin{align*}
\MoveEqLeft[16]
\h^{\mrm{S6V},\varepsilon}(\h_0^\varepsilon; x,t)
= \sigma(\alpha)^{-1}\varepsilon^{1/3}\Bigl(h^{\mrm{S6V}}(h_0^\varepsilon; \alpha t\varepsilon^{-1} + \beta(\alpha)x\varepsilon^{-2/3}, \floor{\varepsilon^{-1}t})  - \mu(\alpha)t\varepsilon^{-1} - \mu'(\alpha)\beta(\alpha)x\varepsilon^{-2/3}\Bigr)
\end{align*}
for all $x$ such that the argument of $h^{\mrm{S6V}}(h_0^\varepsilon;\bm\cdot, \floor{\varepsilon^{-1}t})$ is an integer, and by linear interpolation elsewhere. Again $\h_0^\varepsilon$ and $h_0^\varepsilon$ are related by \eqref{e.rescaled h for S6V}.
\end{definition}

\subsection{The directed landscape and KPZ fixed point}\label{s.landscape and fixed point}

\subsubsection{The directed landscape}

We give the definition of the directed landscape in terms of the Airy sheet $\S:\R^2\to\R$; the reader is referred to \cite[Definition 8.1]{dauvergne2018directed} for the definition of $\S$. 

\begin{definition}[Directed landscape, {\cite[Definition 10.1]{dauvergne2018directed}}]\label{d.directed landscape}
The \emph{directed landscape} is the uni\-que (in law) random continuous function $\mc L : \{(x,s;y,t) \in \R^4: s<t\} \to \R$ that satisfies the following properties.
\begin{enumerate}
  \item[(i)] \emph{Airy sheet marginals}: For any $s \in \R$ and $t > 0$ the increment over time interval $[s, s + t)$ is a rescaled Airy sheet:
 \begin{align*}
  (x,y)\mapsto \mc L(x, s; y, s + t) \stackrel{d}{=} (x,y)\mapsto t^{1/3}\S(xt^{-2/3}; yt^{-2/3}).
 \end{align*}

\item[(ii)] \emph{Independent increments}: For any $k\in\N$ and disjoint time intervals $\{(s_i, t_i) : i \in \intint{1, k}\}$, the random functions
$$(x,y)\mapsto\mc L(x, s_i; y, t_i)$$
are independent as $i \in \intint{1,k}$ varies.

\item[(iii)] \emph{Metric composition law}: Almost surely, for any $r < s < t$ and $x, y \in \R$,
$$\mc L(x, r; y, t) = \max_{z\in\R}\Bigl(\mc L(x, r; z, s) + \mc L(z, s; y, t)\Bigr).$$
\end{enumerate}
The existence and uniqueness of the directed landscape is proved in \cite[Theorem 10.9]{dauvergne2018directed}.
\end{definition}

\subsubsection{The KPZ fixed point}

\begin{definition}[Space of upper semi-continuous functions]\label{d.uc metric}
For each $\lambda>0$, let $\mrm{UC}_\lambda$ be the space of upper semi-continuous functions
$f : \R \to \R \cup \{-\infty\}$ satisfying $f(x) \leq \lambda(1+|x|)$ for all $x\in\R$. For an interval $I\subseteq \R$ and $f:I\to\R$, let $\mrm{hypo}(f) = \{(x,y)\in I\times\R: y\leq f(x)\}$ be the hypograph of $f$. Consider the metric $d_{[-\infty, \infty)}(x, y) = |e^x - e^y |$ on $\R\cup\{-\infty\}$, and endow $\mrm{UC}_\lambda$ with the metric $d_{\mrm{UC},\lambda}$ defined by
\begin{align*}
d_{\mrm{UC},\lambda}(f,g) = \sum_{\ell=1}^{\infty}2^{-\ell} \min\left(1, d_{\mrm{Haus}, [-\infty,\infty), [-\ell,\ell]}\left(\mrm{hypo}(f|_{[-\ell,\ell]}), \mrm{hypo}(g|_{[-\ell,\ell]})\right)\right),
\end{align*}
where $d_{\mrm{Haus}, [-\infty,\infty), [-\ell,\ell]}$ is the Hausdorff metric on subsets of $[-\ell, \ell]\times\R$ induced by the metric $d_{[-\infty, \infty)}$.
\end{definition}

\begin{definition}[Convergence in UC]
We define $\mrm{UC} = \cup_{\lambda=1}^\infty \mrm{UC}_\lambda$. We say $\{f^{\varepsilon}\}_{\varepsilon>0}\subseteq \mrm{UC}$ converges locally in UC to $f\in\mrm{UC}$ if there is some $\lambda>0$ such that $f^\varepsilon\in \mrm{UC}_{\lambda}$ for all $\varepsilon>0$ and $f^\varepsilon\to f$ in $\mrm{UC}_\lambda$.
\end{definition}

\begin{definition}
For a function $\h_0 \in\mrm{UC}$, and real numbers $s,t>0$ with $s>t$, define the \emph{KPZ fixed point} $(y,t)\mapsto\h(\h_0, s; y,t): \R\times(0,\infty)\to \R$ started from $\h_0$ at time $s$ by
\begin{align}\label{e.fixed point via landscape}
\h(\h_0, s; y, t) := \sup_{x\in\R} \Bigl(\h_0(x) + \cL(x,s;y,t)\Bigr).
\end{align}
\end{definition}

The KPZ fixed point was first defined in \cite{matetski2016kpz} as a Markov process on UC by specifying its transition kernel through explicit formulas. This was later shown to be equivalent to \eqref{e.fixed point via landscape}, which arises as a scaling limit of solvable last passage percolation models, in \cite[Corollary~4.2]{nica2020one}. Observe that a natural coupling of $\smash{\mf h(\mf h^{(j)}_0; y, t)}$ for different initial conditions $\smash{\mf h^{(j)}_0}$ is obtained by using the same directed landscape $\mc L$ in the variational formula.

\subsection{Main result: Coupled KPZ fixed point convergence}\label{s.main result}

The following is the assumption on the sequence of initial height functions for ASEP or S6V under which we prove convergence to the KPZ fixed point.

\begin{assumption}\label{as.master assumption}
Let $h_0^\varepsilon:\Z\to\Z$ be a sequence of height functions indexed by $\varepsilon>0$ for S6V or ASEP. For ASEP, fix $q\in[0,1)$ and $\alpha\in(-1,1)$, and for S6V, fix $q\in[0,1)$, $b^{\shortrightarrow}\in(0,1)$, and $\alpha\in(z,z^{-1})$, with $z=\frac{1-b^{\shortrightarrow}}{1-qb^{\shortrightarrow}}$. Define $\h_0^\varepsilon$ by Definition~\ref{d.rescaled ASEP height function} or \ref{d.rescaled S6V height function} for ASEP and S6V, respectively, with the corresponding choice of parameters.
Assume that $\h^\varepsilon_0\to\h_0$ locally in UC for a function $\h_0:\R\to\R\cup\{-\infty\}$ in UC and that $\h_0^\varepsilon$ satisfies, for some fixed $\lambda>0$ for all $\varepsilon>0$,
\begin{equation}\label{e.initial condition assumption}
\begin{gathered}
\h_0^\varepsilon(x) \leq \lambda(1+|x|) \quad\text{for all } x\in\R\  \text{ and }
\sup_{x\in[-\lambda,\lambda]} \h_0^\varepsilon(x) \geq -\lambda.
\end{gathered}
\end{equation}
\end{assumption}

Here we explain the meaning of Assumption~\ref{as.master assumption}. First, some growth condition of the type of \eqref{e.initial condition assumption} is necessary for the convergence to hold, as without any growth condition the KPZ fixed point would not be well-defined. While the weakest condition to ensure the existence of the KPZ fixed point would allow the initial condition to grow almost as $x^2$, condition \eqref{e.initial condition assumption} is the best known even in the case of the convergence of TASEP to the KPZ fixed point \cite{matetski2016kpz}.

Second, the condition that $\alpha\in(-1,1)$ or $\alpha\in(z,z^{-1})$ in the case of ASEP and S6V, respectively, corresponds to the full \emph{rarefaction fans} of the two models. If an initial particle configuration has macroscopic density $|\mu'(\alpha)|\in(0,1)$ around the origin for some $\alpha$ in the rarefaction fan, then we expect particles to move at macroscopic speed $\alpha$; in other words, $\alpha$ is the slope of the characteristic emanating from 0 for that initial condition. In particular, the full rarefaction fan corresponds exactly to the full range of possible macroscopic densities for the initial configuration.

We also note from \eqref{e.rescaled h for ASEP} and \eqref{e.rescaled h for S6V} that the assumption that $\h_0^\varepsilon\to\h_0$ for an $\h_0$ lying in UC implicitly assumes that the initial particle configuration is a perturbation, on the $\varepsilon^{1/3}$ scale, of a macroscopic density in the hydrodynamic limit corresponding to a speed in the rarefaction fan. The restriction to a speed in the rarefaction fan is necessary to ensure that the quantities appearing in \eqref{e.rescaled h for ASEP} and \eqref{e.rescaled h for S6V} are well-defined (but not sufficient to ensure the convergence in UC). Also, contributions from parts of the system away from the origin will be of lower order and not survive under the rescaling in Definitions~\ref{d.rescaled ASEP height function} or \ref{d.rescaled S6V height function}.

We can now give the precise statement of our main result for ASEP and S6V. Its proof is given in Section~\ref{s.proofs of main theorems}.

\begin{theorem}\label{t.main kpz fixed point convergence}
Fix parameters for ASEP or S6V as in Assumption~\ref{as.master assumption} and let $k\in\N$. Suppose that, for each $i\in\intint{1,k}$, $s_i>0$ and $\smash{h_0^{(i),\varepsilon}}:\Z\to \Z$ is a sequence (indexed by $\varepsilon>0$) of initial height functions for ASEP or S6V satisfying Assumption~\ref{as.master assumption}, and let $\smash{\h_0^{(i)}}$ be the limit of $\smash{\h_0^{(i),\varepsilon}}$ for each $i\in\intint{1,k}$ as $\varepsilon\to 0$. Let $\mc T\subseteq (0,\infty)$ be a countable set. Then, under the basic coupling of the evolutions, for $* = $ ASEP or S6V,
$$\h^{*,\varepsilon}(\h_0^{(i),\varepsilon}, s_i;\bm\cdot, t) \stackrel{d}{\to} \h(\h_0^{(i)}, s_i;\bm\cdot, t) \text{ as } \varepsilon\to 0 \text{ jointly over } t\in\mc T\cap \{s>s_i\} \text{ and } i\in\intint{1,k}$$
under the topology of uniform convergence on compact sets, where the righthand sides are coupled via using the same directed landscape $\cL$ in \eqref{e.fixed point via landscape}.
\end{theorem}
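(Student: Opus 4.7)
The plan is to follow the blueprint from Section~\ref{s.proof ideas} and build marginal KPZ fixed point convergence through a sequence of increasingly general initial conditions, then promote marginal to joint convergence via the already-established directed landscape convergence from \cite{aggarwal2024scaling}. The key tool for the latter promotion is that, under the basic coupling, height monotonicity (Lemmas~\ref{l.asep height monotonicity} and~\ref{l.s6v approximate height monotonicity}) applied to a dense family of coupled step initial conditions yields a prelimit variational inequality of the form
\[
\h^{*,\varepsilon}(\h_0^{(i),\varepsilon}, s_i; x, t) \geq \sup_{y \in \R}\Bigl(\h_0^{(i),\varepsilon}(y) + \cL^{*,\varepsilon}(y, s_i; x, t)\Bigr),
\]
where $\cL^{*,\varepsilon}$ is a prelimit directed landscape built from the coupled step evolutions. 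Sending $\varepsilon \to 0$ and invoking directed landscape convergence produces a distributional lower bound matching \eqref{e.fixed point via landscape}. Once marginal KPZ fixed point convergence is known for each $\h_0^{(i)}$, this one-sided inequality must saturate in distribution, forcing joint convergence through a single limiting $\cL$; this is the role of Lemma~\ref{l.joint kpz fixed point convergence}. So the real work is marginal convergence under Assumption~\ref{as.master assumption}.

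I would begin marginal convergence with the stationary case, where the particle configuration is Bernoulli of density $|\mu'(\alpha)|$ and the rescaled initial condition is a two-sided Brownian motion of the correct drift and variance. Here one-point current fluctuations at velocity $\alpha$ are known from~\cite{aggarwal2018current}; combining this with the prelimit variational lower bound above and the explicit stationary KPZ fixed point characterization yields process-level convergence for Bernoulli initial data (Proposition~\ref{p.bernoulli convergence}). Next, I exploit that a Bernoulli sample packages together all deterministic configurations on a window: given a continuous target that equals a common-drift random walk outside a compact interval, I restrict to the positive-probability event that the Bernoulli realization matches the target on that window, and read off convergence from the stationary case (Proposition~\ref{p.KPZ FP convergence via approximation bernoullis}).

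To reach arbitrary $\mrm{UC}$ initial data bounded by $\lambda(1+|x|)$, I must allow the flanking random walks to have opposite drifts $\pm\lambda$, producing a V-shape envelope (Proposition~\ref{p.KPZ FP convergence for V shape bernoulli}). The approach is to compare, under the basic coupling, such a V-shape to a same-drift slope-$\lambda$ configuration for which convergence is already established, and encode the discrepancy by giving the ``extra'' particles a lower color in a colored version of the model. It then suffices to show that, with high probability, no lower-color particle reaches the observation window on the spatial scale $\varepsilon^{-2/3}$. For ASEP this is handled almost directly by the sharp coupling of \cite[Lemma B.1]{quastel2022convergence}, but for S6V only the weaker coupling of \cite[Corollary 3.2]{drillick2024stochastic} is available, so one must carefully track the rate at which lower-priority particles can overtake higher-priority ones over time $\varepsilon^{-1}$. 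I expect this overtaking analysis for S6V to be the main obstacle; the argument must exploit the rarefaction-fan velocity of the colored system to rule out macroscopic displacements of low-color particles into the window.

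With V-shape convergence in hand, marginal convergence for continuous $\h_0$ bounded by $\lambda(1+|x|)$ follows by squeezing: the prelimit variational lower bound gives one side, while monotonicity against a V-shape upper envelope gives the other (Proposition~\ref{p.main result in continuous case}). Finally, to reach an arbitrary $\h_0 \in \mrm{UC}_\lambda$ I would sandwich it above and below by continuous functions in $\mrm{UC}_\lambda$, apply the continuous-case result and height monotonicity in the prelimit, and pass to the limit using continuity of the KPZ fixed point variational formula in $\h_0$ with respect to the $\mrm{UC}$ topology. With marginal convergence in hand for every $\h_0^{(i)}$, joint convergence across $i \in \intint{1,k}$ and $t \in \mc T$ follows from Lemma~\ref{l.joint kpz fixed point convergence} applied to the prelimit variational lower bound, completing the proof.
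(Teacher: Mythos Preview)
Your proposal is correct and follows essentially the same route as the paper: Bernoulli $\to$ compact perturbation with same-drift flanks $\to$ V-shaped opposite-drift flanks via colored discrepancy analysis $\to$ continuous $\h_0$ by squeezing $\to$ general $\mrm{UC}$, with Lemma~\ref{l.joint kpz fixed point convergence} promoting marginal to joint convergence.

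One small correction in your final step: you cannot sandwich a general $\h_0\in\mrm{UC}_\lambda$ from \emph{below} by continuous functions in $\mrm{UC}_\lambda$ (think of a narrow wedge, or any $\h_0$ taking the value $-\infty$). The paper approximates only from above by continuous functions and uses the prelimit variational lower bound (which you already invoked for the continuous case) directly on $\h_0^\varepsilon$ itself for the lower side; since $\h_0^\varepsilon\to\h_0$ locally in $\mrm{UC}$, the restricted supremum $\sup_{|y|\le M}(\h_0^\varepsilon(y)+\S^{*,\varepsilon}(y;x))$ converges to the corresponding expression with $\h_0$ and $\S$, which is enough. This is a minor adjustment to an otherwise accurate outline.
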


We note that we do not establish temporal tightness of the process in Theorem~\ref{t.main kpz fixed point convergence}, and this was also not done in \cite{aggarwal2024scaling}. It would be interesting to obtain such tightness and upgrade the above convergence to also hold as a continuous function in time.

As a special case, Theorem~\ref{t.main kpz fixed point convergence} also yields convergence for general initial conditions for the colored ASEP and S6V (see, e.g., \cite[Section 2]{aggarwal2024scaling}) by recalling that the colored coupling is the basic coupling in the case that the initial particle configurations are ordered.

Lemma~\ref{l.joint kpz fixed point convergence} ahead explains that to obtain the joint convergence in Theorem~\ref{t.main kpz fixed point convergence}, it is sufficient to prove the marginal convergence for a single initial condition and fixed time $t$. Thus, the proof of Theorem~\ref{t.main kpz fixed point convergence} is reduced to its $k=1$ case, and the bulk of the paper will be devoted to establishing this.

\section{Model preliminaries and properties of the KPZ fixed point}\label{s.preliminaries}

In the first two subsections of this section we collect some useful facts about the two models, ASEP and S6V, including the convergence to the directed landscape and monotonicity properties for the height function under the basic coupling. ASEP is addressed in Section~\ref{s.asep preliminaries} and S6V in Section~\ref{s.s6v preliminaries}. In Section~\ref{s.colored models}, we introduce the colored versions of the models and state a corollary of our main result for them. In Section~\ref{s.fixed point properties} we record an estimate on the location of the maximizer in the definition \eqref{e.fixed point via landscape} of the KPZ fixed point and some consequences.

\subsection{Preliminary facts about ASEP}\label{s.asep preliminaries}

To define the ASEP landscape, which is the prelimiting object that will converge to the directed landscape, we need to introduce a two-parameter height function. For $y\in\Z$, consider the step-initial conditions $h^{\mrm{step}}_{0,y}$ defined by $h^{\mrm{step}}_{0,y}(w) = (y-w)\one_{w\leq y}$ and couple the evolutions of all of them under the ASEP dynamics via the basic coupling. Then for any $0<s<t$, define $h^{\mrm{ASEP}}(\bm\cdot, s;\bm\cdot, t):\Z^2\to\Z$ by 
\begin{equation}\label{e.asep unscaled sheet}
h^{\mrm{ASEP}}(y,s;x,t) = h^{\mrm{ASEP}}(h^{\mrm{step}}_y, s; x,t).
\end{equation}
In words, $h^{\mrm{ASEP}}(y,s;x,t)$ is the number of particles to the right of $x$ at time $t$ when started at time $s$ from the initial condition consisting of a particle placed at every site at or to the left of $y$ and none to the right of it.

\begin{definition}[ASEP landscape and sheet]\label{d.asep sheet}
Fix $q\in[0,1)$ and a velocity $\alpha\in(-1,1)$ in the rarefaction fan. Let $\gamma=1-q$ and recall the scaling coefficients $\mu^{\mrm{ASEP}}$ and $\sigma^{\mrm{ASEP}}$ from \eqref{e.mu sigma ASEP}. For $\varepsilon>0$, we define the \emph{ASEP landscape} $\cL^{\mrm{ASEP}, \varepsilon}: \{(y,s;x,t) \in \R^4 : s<t\}\to\R$, for $x$ and $y$ such that the arguments of $h^{\mrm{ASEP}}$ below are integers, by
\begin{equation}\label{e.rescaled asep definition}
\begin{split}
\MoveEqLeft[8]
\cL^{\mrm{ASEP}, \varepsilon}(y, s; x, t) := \sigma(\alpha)^{-1}\varepsilon^{1/3}\Bigl(\mu(\alpha)2(t-s)\varepsilon^{-1} + \mu'(\alpha)\beta(\alpha)(x-y)\varepsilon^{-2/3}\\
&- h^{\mrm{ASEP}}\bigl(\beta(\alpha)y\varepsilon^{-2/3}, 2\gamma^{-1}\varepsilon^{-1}s; 2\alpha \varepsilon^{-1}(t-s) + \beta(\alpha)x\varepsilon^{-2/3}, 2\gamma^{-1} \varepsilon^{-1}t\bigr)\Bigr);
\end{split}
\end{equation}
the values at all other $x$, $y$ are determined by linear interpolation. The \emph{ASEP sheet} $\S^{\mrm{ASEP},\varepsilon}:\R^2\to\R$ is defined by $\S^{\mrm{ASEP},\varepsilon}(y;x) = \cL^{\mrm{ASEP},\varepsilon}(y,0;x,1)$ for all $x,y\in\R$.
\end{definition}

Next we state the convergence of $\cL^{\mrm{ASEP},\varepsilon}$ to the directed landscape $\cL$ as $\varepsilon\to 0$ for any $q\in[0,1)$ and velocity $\alpha$ in the rarefaction fan. For a set $\mc T$, we define $\mc T^2_{<} := \{(s,t)\in\mc T^2: s<t\}$.

\begin{theorem}[Directed landscape convergence for ASEP, {\cite[Corollary 2.12]{aggarwal2024scaling}}]\label{t.asep airy sheet}
Fix any asymmetry $q \in [0,1)$ and any velocity $\alpha\in(-1,1)$ in the rarefaction fan. Let $\mc T\subseteq [0,\infty)$ be a countable set. Then, as $\varepsilon\to 0$, $\smash{\cL^{\mrm{ASEP},\varepsilon}(\bm\cdot, s; \bm\cdot, t) \stackrel{d}{\to} \cL(\bm\cdot, s;\bm\cdot, t)}$ weakly in $\mc C(\R^2,\R)$ with the topology of uniform convergence on compact sets, jointly over $(s,t)\in\mc T^2_<$.
\end{theorem}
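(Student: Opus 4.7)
The plan is to verify, for any subsequential limit, the three defining properties of the directed landscape from Definition~\ref{d.directed landscape} (Airy sheet marginals, independent increments, and metric composition law), and then invoke the characterization of \cite{dauvergne2018directed} to identify the limit as $\cL$. This is combined with a tightness argument in $\mc C(\R^2,\R)$ with the topology of local uniform convergence, applied jointly over any finite subcollection of $(s,t) \in \mc T^2_<$.

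I would first establish tightness of $\{\cL^{\mrm{ASEP},\varepsilon}(\bm\cdot, s; \bm\cdot, t)\}_{\varepsilon>0}$ for each fixed $s<t$ by obtaining uniform H\"older-type modulus of continuity bounds on the rescaled height function jointly in its two spatial arguments. These can be derived from comparison to stationary ASEP increments, which have local Brownian scaling under the basic coupling with height monotonicity, combined with known one-point current fluctuation bounds such as those established in \cite{aggarwal2018current}.

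The core ingredient is marginal convergence of the sheet $\S^{\mrm{ASEP},\varepsilon}$ to the Airy sheet $\S$. Here I would use the colored Gibbsian line ensemble framework of \cite{aggarwalborodin}: the coupled step initial conditions $h^{\mrm{step}}_{0,y}$ indexed by $y\in\Z$, evolved under the basic coupling, form an ordered family of height functions (the ordering coming from particle monotonicity) that fit into a discrete line ensemble carrying a Hall--Littlewood-type Gibbs resampling property. Under the KPZ scaling of Definition~\ref{d.rescaled ASEP height function}, this ensemble is tight, and any subsequential limit satisfies the Brownian Gibbs property. The characterization theorems of \cite{aggarwal2023strong,dimitrov2021characterization} then identify the limit as the parabolic Airy line ensemble, from which the two-parameter Airy sheet structure is read off as the $y$-parameter varies.

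Independent increments of $\cL^{\mrm{ASEP},\varepsilon}$ over disjoint time intervals follow immediately from the Markov property of ASEP: for disjoint intervals $[s_i, t_i]$, the rescaled increment $\cL^{\mrm{ASEP},\varepsilon}(\bm\cdot, s_i; \bm\cdot, t_i)$ is constructed from the Poisson clocks in $[s_i, t_i]$ applied to the deterministic step initial conditions reinstated at time $s_i$, so these are independent in the prelimit and in any weak limit. The main obstacle is verifying the metric composition law, because the prelimit does \emph{not} satisfy an exact composition identity: step initial conditions are not preserved by ASEP dynamics, so $h^{\mrm{ASEP}}(y,r;x,t)$ does not literally equal the sup over $z$ of $h^{\mrm{ASEP}}(y,r;z,s) + h^{\mrm{ASEP}}(z,s;x,t)$ (up to shifts). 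One direction of the inequality comes from the variational bound obtained by reinitializing at time $s$ from a step profile matched to the intermediate configuration; the reverse direction requires showing that, at the intermediate time $s$, the ASEP profile launched from step at time $r$ is locally (near the variational maximizer) well-approximated by a coupled step configuration, with discrepancies of smaller order than the KPZ $\varepsilon^{1/3}$ fluctuation scale. This in turn rests on localization of the maximizer via curvature of the macroscopic height profile, together with convergence of local fluctuations to stationary behavior and decorrelation estimates; once such an approximate composition law is established in the limit, the characterization of \cite{dauvergne2018directed} identifies the joint limit as $\cL$.
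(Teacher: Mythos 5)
This statement is not proved in the paper: it is cited verbatim as \cite[Corollary 2.12]{aggarwal2024scaling}, and the paper offers no argument for it (by contrast, its S6V analog, Theorem~\ref{t.s6v airy sheet}, does receive a short proof in Appendix~\ref{s.s6v landscape proof}, but only because the present paper's definitions differ slightly from those of the cited reference). Your proposal is therefore a blind reconstruction of the external result, and should be judged against what the cited paper actually does, which the introduction here only describes at a high level.

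Your outline is broadly aligned with that description: \cite{aggarwal2024scaling} does go via the coupled step initial conditions, the colored Gibbsian line ensemble structure of \cite{aggarwalborodin}, and the characterization of the parabolic Airy line ensemble from \cite{aggarwal2023strong,dimitrov2021characterization}, and the deduction of landscape convergence from sheet convergence does proceed by verifying the three axioms of Definition~\ref{d.directed landscape} against a tight subsequential limit. However, there is a genuine discrepancy in your treatment of the metric composition law. You propose to establish a two-sided approximate composition in the prelimit, with the hard direction coming from ``localization of the maximizer via curvature plus local stationarity and decorrelation.'' That is not the route taken, and it would be quite hard to make rigorous at the KPZ fluctuation scale. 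The actual mechanism (and the one mirrored elsewhere in this paper, e.g., in the proof of Proposition~\ref{p.bernoulli convergence} and Lemma~\ref{l.joint kpz fixed point convergence}) is one-sided: height monotonicity gives the variational lower bound $\cL^{\mrm{ASEP},\varepsilon}(y,r;x,t) \geq \sup_z(\cL^{\mrm{ASEP},\varepsilon}(y,r;z,s) + \cL^{\mrm{ASEP},\varepsilon}(z,s;x,t))$ almost surely (exactly for ASEP; compare Lemma~\ref{l.prelimiting variation inequality asep}), the two sides of the limiting inequality are shown to agree in one-point law because both marginally reduce to the Airy sheet, and then the elementary observation of Lemma~\ref{l.X=Y} (stochastic domination plus equality in law forces almost-sure equality) upgrades the inequality to an identity. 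Your sketched ``reverse inequality'' step is precisely what this trick is designed to avoid, and without it you would be left needing quantitative local-stationarity and decorrelation estimates near the maximizer that you have not supplied and that are not how the cited argument runs.

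Beyond that structural point, each of the ingredients you name --- tightness from stationary comparisons, the Hall--Littlewood-type Gibbs property of the discrete ensemble, its survival under scaling, and the reading off of the two-parameter Airy sheet from the one-parameter line-ensemble characterization --- is a substantial argument in its own right, not a corollary, so as written the proposal is a plan rather than a proof. None of this detracts from the fact that the present paper simply invokes the result; if you want a genuine reconstruction you should work from \cite{aggarwal2024scaling} directly rather than reverse-engineering it from the summary here.
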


The following records the well-known height monotonicity property of ASEP under the basic coupling. Its proof follows immediately from the definition of the basic coupling.

\begin{lemma}[Height monotonicity of ASEP]\label{l.asep height monotonicity}
Let $H\in\Z$ be an integer and $h_0, h_0':\Z\to\Z$ be height functions such that $h_0(x) +H \geq h_0'(x)$ for all $x\in\Z$. Then, under the basic coupling, almost surely, $h^{\mrm{ASEP}}(h_0, 0; x,t) +H \geq h^{\mrm{ASEP}}(h_0', 0; x,t)$ for all $x\in\Z$ and $t>0$.
\end{lemma}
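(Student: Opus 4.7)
The plan is to first reduce to the case $H=0$. The ASEP dynamics are driven entirely by the particle configuration $\eta_{h_0}$, which depends only on the discrete derivative of $h_0$; adding a constant $H$ to $h_0$ shifts the associated height function $h^{\mrm{ASEP}}(h_0;\bm\cdot,t)$ by $H$ at every time but leaves the particle dynamics unchanged. Hence $h^{\mrm{ASEP}}(h_0+H;x,t) = h^{\mrm{ASEP}}(h_0;x,t) + H$, and it suffices to prove that $h_0 \geq h_0'$ pointwise implies $h^{\mrm{ASEP}}(h_0;x,t) \geq h^{\mrm{ASEP}}(h_0';x,t)$ for all $x \in \Z$ and $t \geq 0$ under the basic coupling.

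I would then argue by induction on the Poisson clock ring times. Fix $x$ and a time interval $[0,T]$; within any bounded space-time window, almost surely only finitely many clocks relevant to the evolution at $(x,T)$ ring, and between consecutive rings the quantity $D(y,\cdot) := h^{\mrm{ASEP}}(h_0;y,\cdot) - h^{\mrm{ASEP}}(h_0';y,\cdot)$ is constant in time. So it is enough to show that if $D(y,\tau^-) \geq 0$ for all $y$ at the instant just before some clock rings at time $\tau$, then $D(y,\tau) \geq 0$ for all $y$ as well. Since a single ring affects at most one height value per process, I only need to examine $D$ at the single affected site.

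There are two cases to check: the right-clock $\xi_x^{\mrm{R}}$ rings (potentially incrementing $h^{\mrm{ASEP}}(\bm\cdot;x,\tau)$ by $1$ in either process), and the left-clock $\xi_{x+1}^{\mrm{L}}$ rings (potentially decrementing $h^{\mrm{ASEP}}(\bm\cdot;x,\tau)$ by $1$ in either process). The only scenarios that could break $D(x)\geq 0$ are the ``asymmetric'' ones where exactly one process performs the jump, namely: the $h_0'$-process jumps right at $x$ while the $h_0$-process does not, or the $h_0$-process jumps left from $x+1$ while the $h_0'$-process does not. The key step is to show these asymmetric cases are impossible when $D(x,\tau^-) = 0$. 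Using that height function increments lie in $\{0,-1\}$ (so $h^{\mrm{ASEP}}(\bm\cdot;y-1,\tau^-) - h^{\mrm{ASEP}}(\bm\cdot;y,\tau^-) \in \{0,1\}$) together with the inductive hypotheses $D(x-1,\tau^-) \geq 0$ and $D(x+1,\tau^-) \geq 0$, one forces the local height increments of the two processes at $\{x-1,x\}$ and $\{x,x+1\}$ to coincide; this in turn forces the two particle configurations to agree on $\{x, x+1\}$, so the jump occurs in both processes or neither, and $D(x,\tau) \geq 0$ is preserved.

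The only real obstacle is carrying out this short local case analysis carefully; the rest is routine, and the almost-sure validity of the graphical construction on all of $\Z$ is ensured by \cite[Section 10]{harris1978additive} as cited above, which allows the inductive argument to be run uniformly over any bounded space-time window and thereby over all $(x,t) \in \Z \times [0,\infty)$.
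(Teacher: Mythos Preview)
Your proposal is correct and is exactly the standard argument the paper has in mind when it writes ``Its proof follows immediately from the definition of the basic coupling''; the paper gives no further detail. One minor imprecision: when $D(x,\tau^-)=0$ you do not actually force the local increments to \emph{coincide}, only the one-sided implications needed to rule out the two dangerous asymmetric jumps, but your conclusion $D(x,\tau)\geq 0$ is of course unaffected.
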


A useful consequence of Lemma~\ref{l.asep height monotonicity} is that a prelimiting version of \eqref{e.fixed point via landscape} holds with an inequality, which we record next. Its proof is given as a part of the proof of \cite[Corollary~2.12 (3)]{aggarwal2024scaling} in that paper, but we give a self-contained version in Appendix~\ref{app.other proofs} for the benefit of the reader.

\begin{lemma}\label{l.prelimiting variation inequality asep}
Fix $q\in[0,1)$ and $\alpha\in(-1,1)$. Let $h_0^\varepsilon:\Z\to\Z$ be a sequence of height functions for ASEP indexed by $\varepsilon>0$ and let $\h_0^\varepsilon$ be defined as in Definition~\ref{d.rescaled ASEP height function}. For every $\varepsilon>0$, it holds almost surely under the basic coupling that, for all $x\in\R$ and $t>0$,
\begin{align*}
\h^{\mrm{ASEP},\varepsilon}(\h_0^\varepsilon; x,t) \geq \sup_{y\in\R}\left(\h_0^\varepsilon(y) + \cL^{\mrm{ASEP},\varepsilon}(y,0;x,t)\right).
\end{align*}
\end{lemma}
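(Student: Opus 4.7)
The plan is to exploit the fact that the ASEP landscape $\cL^{\mrm{ASEP},\varepsilon}$ is, by \eqref{e.asep unscaled sheet}, the rescaled height function evolved from the one-parameter family of step initial conditions $h^{\mrm{step}}_{0,y}$ coupled through the basic coupling. The idea is then to dominate an arbitrary height function $h_0^\varepsilon$ pointwise by shifted step functions and invoke Lemma~\ref{l.asep height monotonicity}.

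First, for any fixed $y\in\Z$, I would verify the pointwise bound
$$h_0^\varepsilon(w) \leq h^{\mrm{step}}_{0,y}(w) + h_0^\varepsilon(y) \qquad \text{for every } w\in\Z.$$
This is an immediate consequence of the defining property of a height function, namely that increments lie in $\{0,-1\}$: for $w\leq y$ one has $h_0^\varepsilon(w)-h_0^\varepsilon(y) \leq y-w = h^{\mrm{step}}_{0,y}(w)$, and for $w>y$ one has $h_0^\varepsilon(w)-h_0^\varepsilon(y)\leq 0 = h^{\mrm{step}}_{0,y}(w)$. Lemma~\ref{l.asep height monotonicity} (applied with $H=h_0^\varepsilon(y)$) then yields, almost surely in the basic coupling,
$$h^{\mrm{ASEP}}(h_0^\varepsilon; z,\tau) \leq h^{\mrm{ASEP}}(y,0;z,\tau) + h_0^\varepsilon(y)$$
for every $z\in\Z$ and $\tau>0$.

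Next, I would specialize this inequality to $y = \beta(\alpha) u \varepsilon^{-2/3}$, $z = 2\alpha t\varepsilon^{-1} + \beta(\alpha) x\varepsilon^{-2/3}$, and $\tau = 2\gamma^{-1}\varepsilon^{-1} t$ for $u\in\R$ chosen so that these are all integers. Substituting into the definitions of $\h^{\mrm{ASEP},\varepsilon}$ (Definition~\ref{d.rescaled ASEP height function}) and $\cL^{\mrm{ASEP},\varepsilon}$ \eqref{e.rescaled asep definition}, the linear prefactors $2\mu(\alpha)t\varepsilon^{-1}$ and $\mu'(\alpha)\beta(\alpha)x\varepsilon^{-2/3}$ line up so that the inequality becomes exactly
$$\h^{\mrm{ASEP},\varepsilon}(\h_0^\varepsilon; x,t) \geq \h_0^\varepsilon(u) + \cL^{\mrm{ASEP},\varepsilon}(u,0;x,t)$$
for such lattice-admissible $u$. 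This is the core identity; everything else is bookkeeping with the scaling constants.

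Finally, I would extend to arbitrary $u\in\R$ by noting that both $\h_0^\varepsilon(\bm\cdot)$ and $\cL^{\mrm{ASEP},\varepsilon}(\bm\cdot,0;x,t)$ are defined by linear interpolation from their lattice values, whereas the left-hand side is constant in $u$. Hence on any interval between consecutive admissible $u$'s, the right-hand side is an affine function lying below $\h^{\mrm{ASEP},\varepsilon}(\h_0^\varepsilon;x,t)$ at both endpoints, and so lies below it throughout. Taking the supremum over $u\in\R$ completes the proof. I do not anticipate any real obstacle: the only conceptual step is the step-function domination, and height monotonicity of ASEP under the basic coupling is already provided by Lemma~\ref{l.asep height monotonicity}.
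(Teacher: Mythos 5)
Your proposal is correct and follows essentially the same route as the paper's proof of Lemma~\ref{l.prelimiting variation inequality asep}: dominate $h_0^\varepsilon$ pointwise by the shifted step functions $h^{\mrm{step}}_{0,y}(\bm\cdot)+h_0^\varepsilon(y)$ (this is \eqref{e.s6v easy inequality fixed point} with the inequality reversed for ASEP), apply height monotonicity (Lemma~\ref{l.asep height monotonicity}), and unwind the scaling. The paper proves the S6V case in detail and sketches the ASEP modifications; you have spelled out the ASEP case directly, with the helpful extra remark that the interpolation of the two affine pieces in $y$ (and, symmetrically, in $x$) is what lets one pass from lattice points to all of $\R$.
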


\subsection{Preliminary facts about S6V}\label{s.s6v preliminaries}

In the remainder of this paper we will often parametrize the S6V model by the parameters $q\in[0,1)$ and $z\in(0,1)$ rather than $q$ and $b^{\shortrightarrow}\in(0,1)$. The relation between the two parametrizations is given by $z=(1-b^{\shortrightarrow})/(1-qb^{\shortrightarrow})$ and $b^{\shortrightarrow} = (1-z)/(1-qz)$.

Next we introduce the S6V landscape and state its convergence to the directed landscape as proved in \cite{aggarwal2024scaling}.
Let $y\in\Z$ and consider the step-initial conditions $h^{\mrm{step}}_{0,y}$ defined by $h^{\mrm{step}}_{0,y}(w) = (y-w)\one_{w\geq y}$. We couple the evolutions of the S6V model associated to $h^{\mrm{step}}_{0,y}$ as $y$ varies over $\Z$ by the basic coupling, and define, for $x\in\Z$ and $s,t\in\N$ with $t>s$,
\begin{align}\label{e.s6v colored height function}
h^{\mrm{S6V}}(y,s; x, t) := h^{\mrm{S6V}}(h_{0,y},s; x,t).
\end{align}

\begin{definition}[S6V landscape and sheet]\label{d.s6v sheet}
Fix $q\in[0,1)$, $z\in(0,1)$, and a velocity $\alpha\in(z,z^{-1})$ in the rarefaction fan. Recall the scaling coefficients $\mu^{\mrm{S6V}}$ and $\sigma^{\mrm{S6V}}$ from \eqref{e.mu and sigma}. For $\varepsilon>0$, we define the \emph{S6\kern-0.04em V landscape} $\cL^{\mrm{S6V}, \varepsilon} : \{(y,s;x,t) \in \R^4 : s<t\} \to \R$, in the case that the arguments of $\hssv$ below are integers, by
\begin{equation}\label{e.rescaled s6v definition}
\begin{split}
\MoveEqLeft[18]
\cL^{\mrm{S6V},\varepsilon}(y, s; x, t) := \sigma(\alpha)^{-1}\varepsilon^{1/3}\Bigl(\hssv\bigl(\beta(\alpha) y \varepsilon^{-2/3}, \floor{\varepsilon^{-1}s}; \alpha \varepsilon^{-1}(t-s)+\beta(\alpha) x\varepsilon^{-2/3}, \floor{\varepsilon^{-1}t}\bigr)\\
&   -\mu(\alpha)\varepsilon^{-1}(t-s) - \mu'(\alpha)\beta(\alpha)(x-y)\varepsilon^{-2/3}\Bigr),
\end{split}
\end{equation}
and when the arguments of $\hssv$ are not integers we define $\cL^{\mrm{S6V},\varepsilon}(x;y)$ by linear interpolation. The \emph{S6\kern-0.04em V sheet} $\S^{\mrm{S6V}, \varepsilon} : \R^2 \to \R$ is defined by $\S^{\mrm{S6V}, \varepsilon}(y;x) = \cL^{\mrm{S6V}, \varepsilon}(y,0;x,1)$ for all $x,y\in\R$.
\end{definition}

\noindent Recall that for a set $\mc T$, we define $\mc T^2_{<} = \{(s,t)\in\mc T^2: s<t\}$.

\begin{theorem}[Directed landscape convergence for S6V, {\cite[Corollary 2.22]{aggarwal2024scaling}}]\label{t.s6v airy sheet}
Fix any asymmetry $q \in [0,1)$, spectral parameter $z\in (0,1)$, and velocity $\alpha\in(z,z^{-1})$ in the rarefaction fan. Let $\mc T\subseteq [0,\infty)$ be a countable set. Then, as $\varepsilon\to 0$, $\smash{\cL^{\mrm{S6V},\varepsilon}(\bm\cdot, s; \bm\cdot, t) \stackrel{d}{\to} \cL(\bm\cdot, s;\bm\cdot, t)}$ weakly in $\mc C(\R^2,\R)$ with the topology of uniform convergence on compact sets, jointly over $(s,t)\in\mc T^2_<$.

\end{theorem}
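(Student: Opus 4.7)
\medskip

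\noindent\textbf{Proof proposal for Theorem~\ref{t.s6v airy sheet}.} The plan is to reduce the statement to a two-step program: first, prove convergence of the S6V sheet $\S^{\mrm{S6V},\varepsilon}(\bm\cdot;\bm\cdot)$ to the Airy sheet $\S$ for a single time interval; second, upgrade to joint convergence over arbitrary (countably many) pairs $(s,t)\in\mc T^2_<$ by the characterizing properties of Definition~\ref{d.directed landscape}. Since $\cL^{\mrm{S6V},\varepsilon}$ is defined through the coupled step-initial-condition height functions $\hssv(y,s;x,t)$ (see \eqref{e.s6v colored height function} and \eqref{e.rescaled s6v definition}), the input from the model is encoded in these two-parameter fields, and everything else should follow from general characterization and composition arguments.

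For the first step, my strategy is to view the collection $\{y\mapsto \hssv(y,s;x,t)\}_{x\in\Z}$ jointly through the colored coupling of S6V: the height functions indexed by step-initial conditions are precisely the top curves of a colored Gibbsian line ensemble (in the sense of \cite{aggarwalborodin}). I would then establish process-level tightness of the rescaled line ensemble using a combination of monotonicity and one-point tail bounds inherited from known Tracy--Widom-type asymptotics for S6V (\cite{aggarwal2018current} and related). Given tightness, one identifies subsequential limits as the (coupled) parabolic Airy line ensembles by using a characterization result such as that of \cite{aggarwal2023strong, dimitrov2021characterization}, exploiting that the Gibbs resampling property is preserved under the KPZ scaling limit and that one-point marginals already match those of the Airy process. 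Restricting to the top curves and varying the initial step location $y$ then yields convergence of $\S^{\mrm{S6V},\varepsilon}(\bm\cdot;\bm\cdot)$ to $\S$ as a process on $\R^2$, which, by the stationarity and scaling of S6V in the rarefaction fan, upgrades to convergence of $\cL^{\mrm{S6V},\varepsilon}(\bm\cdot, s;\bm\cdot, t)$ to $\cL(\bm\cdot, s;\bm\cdot, t)$ for any fixed $s<t$.

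For the second step, Definition~\ref{d.directed landscape} tells us that $\cL$ is uniquely characterized (in law) by (i) Airy sheet marginals over each time interval, (ii) independence of increments across disjoint time intervals, and (iii) the metric composition law. Properties (i) and (ii) can be read off directly from the prelimit: (i) is exactly what the first step delivers, and (ii) comes from the Markov property of S6V together with the fact that disjoint time intervals involve disjoint families of Bernoulli randomness in the basic coupling described in Section~\ref{s.s6v basic coupling}. Property (iii) must be verified in the limit; in the prelimit, one has a prelimiting analog of the composition law (a ``$\max$-plus'' equality coming from the basic coupling, analogous to Lemma~\ref{l.prelimiting variation inequality asep}), and the task is to pass this to the limit using the joint convergence over a finite collection of time pairs established from (i)--(ii). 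Once the three characterizing properties of $\cL$ are verified for any subsequential limit, uniqueness in \cite[Theorem 10.9]{dauvergne2018directed} finishes the argument.

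The main obstacle is the first step: proving tightness and identification of the coupled sheet limit. Tightness in the spatial argument $x$ follows from known fluctuation bounds, but controlling joint regularity in both $y$ and $x$ (the ``initial condition'' and ``observation point'') in a scaling-consistent way is delicate, since the colored coupling mixes these two directions in a nontrivial manner. The tool to handle this is precisely the colored line ensemble machinery developed in \cite{aggarwalborodin} (also used in \cite{aggarwal2024scaling}): the Gibbs property on the ensemble gives an effective Brownian-type regularity estimate that, once combined with one-point bounds, yields compactness in the uniform-on-compacts topology. Passing to the limit of the Gibbs property and matching the characterization of the Airy line ensemble is where most of the analytic effort concentrates; once that is in hand, the extension to $\cL$ via properties (i)--(iii) is essentially a soft abstract argument.
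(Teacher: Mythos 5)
Your proposal sets out to prove directed landscape convergence for S6V from first principles, but that is not what this statement requires: the theorem is explicitly a citation of \cite[Corollary 2.22]{aggarwal2024scaling}, and the paper's entire proof (Appendix~\ref{s.s6v landscape proof}) is a short bookkeeping argument. The only issue is that the height function $h^{\mrm{S6V}}(y,s;x,t)$ in \eqref{e.s6v colored height function} and the rescaled landscape in \eqref{e.rescaled s6v definition} differ by a deterministic shift $-N+y-1$ from the finite-arrow conventions of \cite[eqs.~(2.17), (2.20)]{aggarwal2024scaling}; one checks the two definitions agree up to an $O(\sigma(\alpha)^{-1}\varepsilon^{1/3})$ error on an event of probability tending to $1$ using a finite speed of discrepancy estimate (Lemma~\ref{l.finite speed of discrepancy for s6v basic coupling}), and then concludes with the standard fact that $X_n\stackrel{d}{\to}X$ and $d(X_n,Y_n)\to 0$ in probability imply $Y_n\stackrel{d}{\to}X$. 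Your proposal does not address this translation at all.

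More importantly, as a standalone argument your proposal has genuine gaps. It is a research program rather than a proof: the hard steps --- joint tightness of the two-parameter sheet in both the initial-condition variable $y$ and the observation point $x$, identification of subsequential limits via the characterization of the parabolic Airy line ensemble, and passage of the Gibbs property to the limit --- are the content of the entire prior paper \cite{aggarwal2024scaling} and are only named, not executed. In addition, your treatment of the metric composition law (property (iii) of Definition~\ref{d.directed landscape}) is incorrect as stated: for S6V the prelimiting ``max-plus'' relation is only a one-sided approximate \emph{inequality} holding with high probability (this is exactly Lemma~\ref{l.prelimiting variation inequality s6v}; exact equality fails because S6V satisfies only approximate height monotonicity, Lemma~\ref{l.s6v approximate height monotonicity}). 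Upgrading the limiting inequality to an equality requires matching one-point distributions and an argument of the form ``$X\geq Y$ a.s.\ and $X\stackrel{d}{=}Y$ imply $X=Y$ a.s.'' (Lemma~\ref{l.X=Y}), which your sketch omits. If you intend to reprove the landscape convergence rather than cite it, these steps must be supplied; if you intend to use the citation, the proof reduces to the convention-matching argument above.
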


Theorem~\ref{t.s6v airy sheet} actually differs slightly from \cite[Corollary 2.22]{aggarwal2024scaling} in that the definitions \eqref{e.s6v colored height function} and \eqref{e.rescaled s6v definition} differ slightly from their counterparts in \cite{aggarwal2024scaling}, equations (2.17) and (2.20) there, respectively. We give the simple proof of Theorem~\ref{t.s6v airy sheet} given \cite[Corollary 2.22]{aggarwal2024scaling} in Appendix~\ref{app.other proofs}.

Unlike ASEP, height monotonicity does not hold in an exact form for S6V. Instead, an approximate form holds, which we record next. It is a straightforward consequence of a similar approximate height monotonicity for systems with finitely many arrows proved as \cite[Proposition D.3]{aggarwal2024scaling}  along with a finite speed of discrepancy estimate, and so we defer its proof to Appendix~\ref{app.other proofs}.

\begin{lemma}[Approximate height monotonicity of S6V]\label{l.s6v approximate height monotonicity}
Fix $q\in[0,1)$, $b^{\shortrightarrow}\in(0,1)$, and $N\in\N$. Let $H\in\Z$ and let $\smash{h_0^{(1)}, h_0^{(2)}}:\Z\to\Z$ be two height functions such that $\smash{h_0^{(1)}(x)} +H \geq \smash{h_0^{(2)}(x)}$ for all $x\in\intint{-2N,2N}$. There exist absolute constants $C, c>0$ such that, for $t\in\intint{1,\frac{1}{2}(1-b^{\shortrightarrow})N}$ and under the basic coupling, the following holds. For any $(\log N)^2\leq M \leq N$, with probability at least $1-C\exp(-cM)$, $\hssv(h_0^{(1)}; x,t) +H \geq \hssv(h_0^{(2)}; x,t) - M$ for all $x\in\intint{-N,N}$.
\end{lemma}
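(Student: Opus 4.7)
The plan is to reduce to the finite-arrow case treated in \cite[Proposition D.3]{aggarwal2024scaling} via a truncation coupled with a finite speed of discrepancy estimate for S6V. The first step is to truncate: define $\tilde h_0^{(i)}:\Z\to\Z$ to agree with $h_0^{(i)}$ on $\intint{-2N,2N}$ and to be constant outside, namely $\tilde h_0^{(i)}(x) = h_0^{(i)}(-2N)$ for $x\le -2N$ and $\tilde h_0^{(i)}(x) = h_0^{(i)}(2N)$ for $x\ge 2N$. Then $\tilde h_0^{(i)}$ is a height function whose associated arrow configuration $\eta_{\tilde h_0^{(i)}}$ (via \eqref{e.s6v particle from h}) is supported in $\intint{-2N+1, 2N}$, hence has only finitely many arrows. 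The hypothesis $h_0^{(1)}(x)+H\ge h_0^{(2)}(x)$ on $\intint{-2N,2N}$ forces $\tilde h_0^{(1)}(x)+H\ge \tilde h_0^{(2)}(x)$ for all $x\in\Z$, since outside $\intint{-2N,2N}$ both sides are given by the boundary values on which the inequality holds.

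Next, I apply \cite[Proposition D.3]{aggarwal2024scaling} to the truncated pair $\tilde h_0^{(1)}, \tilde h_0^{(2)}$, evolved under the basic coupling. That result gives, on an event of probability at least $1-C\exp(-cM)$ (for $(\log N)^2\le M\le N$ and $t$ in the prescribed range), the approximate monotonicity $\hssv(\tilde h_0^{(1)}; x,t)+H\ge \hssv(\tilde h_0^{(2)}; x,t) - M$ for all $x\in\intint{-N,N}$.

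The remaining step is to transfer this bound from $\tilde h_0^{(i)}$ to $h_0^{(i)}$. By construction, the initial arrow configurations of $\tilde h_0^{(i)}$ and $h_0^{(i)}$ agree on $\intint{-2N,2N}$, so a discrepancy in $\hssv(\tilde h_0^{(i)};x,t)$ versus $\hssv(h_0^{(i)};x,t)$ at $x\in\intint{-N,N}$ requires a discrepancy arrow to have travelled from outside $\intint{-2N,2N}$ across a horizontal distance of at least $N$ in time $t$. In S6V arrows only move weakly to the right, and the one-step rightward displacement of a single discrepancy at a row is stochastically dominated by a $\mathrm{Geo}(1-b^{\shortrightarrow})$ random variable (the number of consecutive right-exits before a vertical exit). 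Hence the total displacement after $t\le \tfrac12(1-b^{\shortrightarrow})N$ steps has mean at most $t\cdot b^{\shortrightarrow}/(1-b^{\shortrightarrow}) < N/2$, and standard Cramér-type large deviations for sums of i.i.d.\ geometric variables give that the probability of travelling $\ge N$ is at most $C\exp(-cN)$ for some absolute $C,c>0$. A union bound over the $O(N)$ relevant discrepancy sources (or, equivalently, appealing to the same monotonicity coupling to compare $\hssv(\tilde h_0^{(i)};x,t)$ with $\hssv(h_0^{(i)};x,t)$) then shows that the two height functions coincide on $\intint{-N,N}$ with probability at least $1-C\exp(-cN)$.

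Intersecting the event from Proposition D.3 with the finite speed of discrepancy event, and using $\exp(-cN)\le \exp(-cM)$ since $M\le N$, yields the claimed bound. The main obstacle is the finite speed of discrepancy estimate itself: although S6V arrows move only to the right (which is what makes a bound feasible at all), a sequential sweep could in principle push a single particle arbitrarily far in one time step, so one must carefully invoke the geometric tail on consecutive right-exits and a concentration inequality to rule out anomalously fast propagation on the time scale $t\lesssim (1-b^{\shortrightarrow})N$. This is essentially the standard S6V finite speed of propagation estimate, and the factor $\tfrac12$ in $\tfrac12(1-b^{\shortrightarrow})N$ is precisely the buffer needed to ensure the Cramér bound is active.
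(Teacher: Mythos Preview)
Your approach matches the paper's: truncate to a finite-arrow system, apply \cite[Proposition~D.3]{aggarwal2024scaling} to the truncated pair, and transfer the conclusion back via a finite speed of discrepancy estimate, using $M\le N$ to absorb the $\exp(-cN)$ error into $\exp(-cM)$; the paper simply cites \cite[Lemma~D.4]{aggarwal2024scaling} for the discrepancy step rather than sketching it. Your sketch of that step is along the right lines, but note that in the paper's conventions arrows move \emph{upward} in space (not ``to the right''), and the ``union bound over the $O(N)$ relevant discrepancy sources'' is not correct as stated since there may be infinitely many arrows below $-2N$ in the original system---what actually works (and what you gesture at parenthetically) is to track the topmost position of discrepancy and show its one-column increment is stochastically dominated by a geometric variable with continuation probability $\le b^{\shortrightarrow}$.
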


Next we record the S6V analog of Lemma~\ref{l.prelimiting variation inequality asep}. Here, because height monotonicity holds only in an approximate sense and on a high probability event, the same is true of the variational inequality. Its proof is also deferred to Appendix~\ref{app.other proofs}.

\begin{lemma}\label{l.prelimiting variation inequality s6v}
Fix $q\in[0,1)$, $z\in(0,1)$, $\alpha\in(z,z^{-1})$, and $t>0$. Let $h_0^\varepsilon:\Z\to\Z$ be a sequence of height functions for S6V indexed by $\varepsilon>0$ and let $\h_0^\varepsilon$ be defined as in Definition~\ref{d.rescaled S6V height function}. There exist positive constants $C,c$ such that, for every $\varepsilon>0$, with probability at least $1-C\exp(-c\varepsilon^{-1/6})$ under the basic coupling, for all $x\in [-\varepsilon^{-1}, \varepsilon^{-1}]$,
\begin{align*}
\h^{\mrm{S6V},\varepsilon}(\h_0^\varepsilon; x,t) \geq \sup_{y\in [-\varepsilon^{-1}, \varepsilon^{-1}]}\left(\h_0^\varepsilon(y) + \cL^{\mrm{S6V},\varepsilon}(y,0;x,t)\right) - \sigma(\alpha)^{-1}\varepsilon^{1/6}.
\end{align*}
\end{lemma}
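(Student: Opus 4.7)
The plan is to mimic the proof of Lemma~\ref{l.prelimiting variation inequality asep} (whose argument is given in Appendix~\ref{app.other proofs}), replacing exact ASEP height monotonicity by its approximate S6V counterpart from Lemma~\ref{l.s6v approximate height monotonicity}, and paying for the loss of exactness with a union bound over a suitably discretized grid of $y$-values. The central geometric input is that for any $y_0\in\Z$ the initial height function $h_0^\varepsilon$ pointwise dominates a shifted step initial condition, i.e.\ $h_0^\varepsilon(w) \geq h^{\mrm{step}}_{0,y_0}(w) + h_0^\varepsilon(y_0)$ for all $w\in\Z$: for $w<y_0$ this uses that a height function is non-increasing, and for $w\geq y_0$ that it has discrete increments in $\{0,-1\}$.

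For each $y_0\in\Z$ of interest, I will apply Lemma~\ref{l.s6v approximate height monotonicity} with $h_0^{(1)}=h_0^\varepsilon$, $h_0^{(2)}=h^{\mrm{step}}_{0,y_0}$ and $H=-h_0^\varepsilon(y_0)$, picking $N = C_1\varepsilon^{-5/3}$ with $C_1$ large enough that $\intint{-N,N}$ contains $\alpha\varepsilon^{-1}t + \beta(\alpha)x\varepsilon^{-2/3}$ for every $x\in[-\varepsilon^{-1},\varepsilon^{-1}]$, and $M \asymp \varepsilon^{-1/6}$ (so that $(\log N)^2 \leq M \leq N$ and $\floor{\varepsilon^{-1}t}\leq \tfrac{1}{2}(1-b^{\shortrightarrow})N$ for small $\varepsilon$). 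With probability at least $1-C\exp(-c\varepsilon^{-1/6})$, the lemma yields
\[
\hssv(h_0^\varepsilon; x', \floor{\varepsilon^{-1}t}) \geq \hssv(y_0, 0; x', \floor{\varepsilon^{-1}t}) + h_0^\varepsilon(y_0) - M
\]
for all integer $x' \in \intint{-N,N}$. Substituting $x' = \alpha\varepsilon^{-1}t + \beta(\alpha)x\varepsilon^{-2/3}$ and $y_0 = \beta(\alpha)y\varepsilon^{-2/3}$, subtracting $\mu(\alpha)\varepsilon^{-1}t + \mu'(\alpha)\beta(\alpha)x\varepsilon^{-2/3}$ from both sides and multiplying by $\sigma(\alpha)^{-1}\varepsilon^{1/3}$ then produces exactly the rescaled inequality
\[
\h^{\mrm{S6V},\varepsilon}(\h_0^\varepsilon; x,t) \geq \h_0^\varepsilon(y) + \cL^{\mrm{S6V},\varepsilon}(y,0;x,t) - \sigma(\alpha)^{-1}\varepsilon^{1/6},
\]
since the stray $\mu'(\alpha)\beta(\alpha)y\varepsilon^{-2/3}$ terms arising from the expansions of $\h_0^\varepsilon(y)$ and $\cL^{\mrm{S6V},\varepsilon}(y,0;x,t)$ cancel, and $\sigma(\alpha)^{-1}\varepsilon^{1/3}M$ matches the advertised error up to a harmless constant that can be absorbed into the choice of $M$.

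Next I will union bound over the $O(\varepsilon^{-5/3})$ grid points $y_0\in\Z$ corresponding to rescaled $y\in[-\varepsilon^{-1},\varepsilon^{-1}]$ with $\beta(\alpha)y\varepsilon^{-2/3}\in\Z$; the total failure probability remains $\leq C'\exp(-c'\varepsilon^{-1/6})$ after adjusting constants, because the polynomial factor $\varepsilon^{-5/3}$ is swallowed by the super-polynomial decay. On the complementary event the displayed inequality holds at every grid $y$; it then extends to all $y\in[-\varepsilon^{-1},\varepsilon^{-1}]$ because $y\mapsto \h_0^\varepsilon(y) + \cL^{\mrm{S6V},\varepsilon}(y,0;x,t)$ is piecewise linear between the common grid, so its supremum over the interval is attained at a grid point. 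Linearity in $x$ between the $x$-grid points similarly upgrades the inequality from integer $x'$ to all $x\in[-\varepsilon^{-1},\varepsilon^{-1}]$.

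The principal delicacy is the joint calibration of $M$ and $N$ in Lemma~\ref{l.s6v approximate height monotonicity}: we need $N$ large enough ($\sim\varepsilon^{-5/3}$) to cover the region of interest; $M$ small enough that the rescaled error $\sigma(\alpha)^{-1}\varepsilon^{1/3}M$ is of order $\varepsilon^{1/6}$; and simultaneously $M$ large enough that $C\exp(-cM)$ dominates the polynomial union-bound penalty $O(\varepsilon^{-5/3})$, while respecting $(\log N)^2 \leq M \leq N$. The exponent $M\asymp\varepsilon^{-1/6}$ is essentially forced by these constraints and, consequently, so is the error $\sigma(\alpha)^{-1}\varepsilon^{1/6}$ stated in the lemma; any strict improvement would require strengthening Lemma~\ref{l.s6v approximate height monotonicity} itself.
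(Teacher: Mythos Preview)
Your proof is correct and follows essentially the same approach as the paper: both compare $h_0^\varepsilon$ pointwise to shifted step initial conditions, apply Lemma~\ref{l.s6v approximate height monotonicity} with $M\asymp\varepsilon^{-1/6}$, and absorb a polynomial union bound over the relevant $y$-grid into the stretched-exponential error. The only cosmetic difference is that the paper takes $N=\varepsilon^{-2}$ rather than your $N\asymp\varepsilon^{-5/3}$, and your explicit remark about piecewise linearity extending the inequality from grid points to all $y$ is a detail the paper leaves implicit.
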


\noindent The above estimate has not been optimized, i.e., quantities  like $\varepsilon^{-1}$ and $\varepsilon^{1/6}$ have no significance.

The following statement upgrades marginal distributional convergence of a given initial condition at a fixed time  to joint distributional convergence across multiple initial conditions and multiple times, thus reducing Theorem~\ref{t.main kpz fixed point convergence} to showing convergence for a single initial condition and time. As its proof is essentially the same as that of \cite[Corollary 2.12 (3)]{aggarwal2024scaling}, we defer it to Appendix~\ref{app.other proofs}. 

\begin{lemma}[Joint KPZ fixed point convergence from marginal for ASEP \& S6V]\label{l.joint kpz fixed point convergence}
Fix parameters for ASEP or S6V as in Assumption~\ref{as.master assumption}. Suppose for any sequence of initial height functions $h_0^{\varepsilon}:Z\to\Z$ such that $\h_0^{\varepsilon}\to\h_0$ locally in UC for a UC function $\h_0$, it holds that, for $*=$ ASEP or S6V,
$\h^{*,\varepsilon}(\h_0^{\varepsilon}; \bm\cdot, 1) \stackrel{d}{\to} \h(\h_0; \bm\cdot, 1).$

Then the following holds. Let $k\in\N$ and fix a countable set $\mc T\subseteq (0,\infty)$, and suppose $s_i>0$ and $(h_0^{(i),\varepsilon})_{i=1}^{k}$ are initial height functions for ASEP or S6V such that, for each $i\in\intint{1,k}$ and as $\varepsilon\to 0$,
$\h_0^{(i),\varepsilon} \to\h_0^{(i)}$
locally in UC for some $\h^{(i)}_0\in\mrm{UC}$.
 Then, if the evolutions are coupled across $i\in\intint{1,k}$ via the basic coupling and $\h(\h_0^{(i)}, s_i; \bm\cdot, t)$ are coupled across $i\in\intint{1,k}$ via the same directed landscape in \eqref{e.fixed point via landscape}, for $*=$ ASEP or S6V,
 \begin{align*}
  \h^{*,\varepsilon}(\h_0^{\varepsilon,(i)}, s_i; \bm\cdot ,t) \stackrel{d}{\to} \h(\h_0^{(i)}, s_i; \bm\cdot ,t)
  \end{align*}
   holds jointly across $i\in\intint{1,k}$ and $t\in\mc T$ in the topology of uniform convergence on compact sets.
\end{lemma}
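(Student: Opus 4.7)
The plan is to combine the joint directed landscape convergence from Theorems~\ref{t.asep airy sheet} and \ref{t.s6v airy sheet}, the assumed marginal KPZ fixed point convergence, and the prelimiting variational inequalities from Lemmas~\ref{l.prelimiting variation inequality asep} and \ref{l.prelimiting variation inequality s6v} to identify any subsequential joint limit uniquely with the coupled KPZ fixed point. First I would extend the hypothesized marginal convergence (which is stated only at observation time $1$ with starting time $0$) to arbitrary $s_i\ge 0$ and $t>s_i$: translation in time, using stationarity of the driving Poisson/Bernoulli randomness, reduces the starting time to $0$; and self-similarity of the $(1,2,3)$-scaling reduces observation at time $t$ with parameter $\varepsilon$ to observation at time $1$ with parameter $\varepsilon/t$, after a trivial rescaling of the initial data that preserves convergence in $\mrm{UC}$ to the analogously rescaled limit.

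Next I would establish joint tightness of the family $\{(\cL^{*,\varepsilon}(\bm\cdot,s_i;\bm\cdot,t),\, \h^{*,\varepsilon}(\h_0^{(i),\varepsilon},s_i;\bm\cdot,t))\}_{i,t}$ in the topology of uniform convergence on compact sets: the landscape coordinate is tight by Theorems~\ref{t.asep airy sheet} and \ref{t.s6v airy sheet}, and each height-function marginal is tight by the extended marginal convergence above. Working along a subsequence realizing a joint weak limit $(\cL,\{\mf H^{(i,t)}\}_{i,t})$, the landscape limit is the directed landscape $\cL$ jointly across $i$ and $t$. Applying Lemma~\ref{l.prelimiting variation inequality asep} or \ref{l.prelimiting variation inequality s6v} to each initial condition yields
\begin{equation*}
\h^{*,\varepsilon}(\h_0^{(i),\varepsilon}, s_i; x, t) \ge \sup_{y}\Bigl(\h_0^{(i),\varepsilon}(y) + \cL^{*,\varepsilon}(y, s_i; x, t)\Bigr),
\end{equation*}
up to a vanishing additive error on a high-probability event in the S6V case. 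Using the linear growth bound \eqref{e.initial condition assumption} on $\h_0^{(i),\varepsilon}$ together with the parabolic decay of $\cL^{*,\varepsilon}$ (via the maximizer-location estimate announced in Section~\ref{s.fixed point properties}) to restrict the supremum to a compact region with high probability uniformly in $\varepsilon$, the right-hand side converges along the subsequence to $\sup_{y\in\R}(\h_0^{(i)}(y)+\cL(y,s_i;x,t))=\h(\h_0^{(i)},s_i;x,t)$ defined through the \emph{same} limiting $\cL$ via \eqref{e.fixed point via landscape}. Passing the inequality to the limit gives $\mf H^{(i,t)}(x) \ge \h(\h_0^{(i)}, s_i; x, t)$ almost surely for every $x$ in a countable dense subset of $\R$.

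Finally, the hypothesized marginal convergence yields $\mf H^{(i,t)}(x) \stackrel{d}{=} \h(\h_0^{(i)}, s_i; x, t)$, so by the elementary fact that $X\ge Y$ a.s.\ together with $X \stackrel{d}{=} Y$ forces $X=Y$ a.s., the inequality becomes an equality at every such $x$; continuity of both processes then upgrades this to equality as random continuous functions. This identifies the subsequential limit uniquely as the coupled KPZ fixed point driven by $\cL$, hence the full sequence converges jointly. The main obstacle I anticipate is the step of passing to the limit in the supremum on the right-hand side of the prelimiting variational inequality: because the initial condition is allowed to grow linearly and the supremum ranges over all of $\R$, a quantitative maximizer-location estimate (leveraging the parabolic behavior of the landscape at infinity) is needed to cut the supremum down to a compact region uniformly in $\varepsilon$.
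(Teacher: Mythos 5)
Your proposal follows essentially the same route as the paper's proof: reduce the marginal hypothesis from $(s_i=0,t=1)$ to general $(s_i,t)$ by time translation and KPZ rescaling, establish joint tightness of the rescaled height functions together with the prelimiting landscape, pass the prelimiting variational lower bound (Lemmas~\ref{l.prelimiting variation inequality asep} and \ref{l.prelimiting variation inequality s6v}) to the subsequential limit with the supremum localized via Lemma~\ref{l.argmax control}, and then upgrade the one-sided inequality to an equality using the fact that $X\ge Y$ a.s.\ together with $X\stackrel{d}{=}Y$ forces $X=Y$ a.s.\ (Lemma~\ref{l.X=Y}), finishing by continuity. This is the paper's argument in all essential respects.
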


\subsection{Colored models}\label{s.colored models}

In our analysis we will need to work with colored version of ASEP and S6V, and we introduce them now. Briefly, in the colored models, each particle/arrow in the system has an associated ``color'', which is an integer and can be thought of as its priority. The evolution of the system is as described in Sections~\ref{s.asep definition} and \ref{s.s6v definition}, except that a particle or arrow of a given color treats those of lower color as being not present.

\begin{remark}\label{r.colroed to basic coupling}
 Both colored ASEP and colored S6V corresponds to the basic coupling of multiple initial particle configurations which are ordered for their uncolored counterparts, i.e., $\smash{\eta_0^{(1)}, \ldots, \eta_0^{(k)}}$ such that $\eta_0^{(1)}(x)\leq \ldots \leq \eta_0^{(k)}(x)$ for all $x\in\Z$; in this case the color of a particle is the number of initial conditions it is present in.
 \end{remark} 

\subsubsection{Colored ASEP} Like its uncolored counterpart, colored ASEP has a single parameter $q\in[0,1)$. Its dynamics are as follows. A particle configuration is given by a function $\eta_0:\Z\to\Z\cup\{-\infty\}$, where $\eta_0(x)$ denotes the color of the particle at $x$, with $-\infty$ denoting no particle. As in uncolored ASEP, each site has associated with it two independent Poisson clocks, $\xi_x^{\mrm{L}}$ and $\xi_x^{\mrm{R}}$, or rate $q$ and $1$, respectively. At a time when $\xi_x^{\mrm{L}}$ rings, if a particle is at $x$, it attempts to jump one site to the left, and if at time that $\xi_x^{\mrm{R}}$ rings a particle is at $x$, it attempts to jump one site to the right. The attempt succeeds if the particle at the target site is of lower color than at $x$, in which case the two particles swap positions, and fails otherwise, in which case nothing happens.

\begin{definition}[Colored height function]
A \emph{colored height function} is a function $h_0:\Z^2\to\Z$ such that $h_0(x,y) - h_0(x,y+1) \in \{0,1\}$ and $h_0(x,y) - h_0(x+1,y) \in \{0,1\}$ for all $x,y\in\Z$.
\end{definition}

Given a colored height function $h_0:\Z^2\to\Z$, the associated initial particle configuration $\eta_0:\Z\to\Z\cup\{-\infty\}$ is given for all $y\in\Z$ by $\eta_{0}(y) = \min\{x: h_0(x,y) - h_0(x+1,y) = 1\}$, with $\min \emptyset = -\infty$. Informally, $h_0(x,y)$ is, up to centering (which may be infinite, in which case this description is only heuristic), the number of particles of color at least $x$ which are strictly to the right of $y$ at time $0$. 

We denote the colored height function at a time $t>0$ by $(x,y)\mapsto h^{\mrm{ASEP}}(h_0; x,y,t)$ and define it now. The evolution of the height function under the colored ASEP dynamics is similar to that described for uncolored ASEP in Section~\ref{s.asep height function}. If at time $t$ a particle of color $x$ jumps from site $y$ to $y+1$, then $h^{\mrm{ASEP}}(h_0; w,y,t) = h^{\mrm{ASEP}}(h_0; w,y,t)+1$ for all $w\leq x$, and $h^{\mrm{ASEP}}(h_0; w,z,t) = h^{\mrm{ASEP}}(h_0; w,z,t)$ for all $(w,z)$ such that $w>x$ or $z\neq y$. If at time $t$ a particle of color $x$ jumps from site $y$ to $y-1$, then $h^{\mrm{ASEP}}(h_0; w,y-1,t) = h^{\mrm{ASEP}}(h_0; w,y-1,t)-1$ for all $w\leq x$, and $h^{\mrm{ASEP}}(h_0; w,z,t) = h^{\mrm{ASEP}}(h_0; w,z,t)$ for all $(w,z)$ such that $w>x$ or $z\neq y-1$.

\subsubsection{Colored S6V}
As in uncolored S6V as described in Section~\ref{s.s6v definition}, colored S6V is determined by parameters $q\in[0,1)$ and $b^{\shortrightarrow}\in(0,1)$ and consists of arrow configurations $(a_v,i_v; b_v,j_v)$ satisfying arrow conservation across vertices $v\in\Z_{\geq 1}\times\Z$. Here, $a_v,i_v,b_v,j_v\in\Z\cup\{-\infty\}$ represent the colors of the vertically incoming, horizontally incoming, vertically outgoing, and horizontally outgoing arrows, respectively, at the vertex $v$ (with $-\infty$ representing an absence of an arrow), and arrow conservation means $\{a_v,i_v\} = \{b_v,j_v\}$ as multisets. The evolution of the system is as described in Section~\ref{s.s6v dynamics} with weights as in Figure~\ref{f.colored R weights}.

\begin{figure}[h]
\begin{tikzpicture}[scale=0.8]
\newcommand{\thedim}{4.2}
\begin{scope}[scale=0.65]

      \draw[step=\thedim] (0,0) grid (5*\thedim, \thedim);

      \draw[->, line width=1.2pt, red] (0.5*\thedim, 0.25*\thedim) -- ++(0, 0.5*\thedim);
      \draw[->, line width=1.2pt, red] (0.25*\thedim, 0.5*\thedim) -- ++(0.5*\thedim, 0);
      \node[scale=0.8, anchor=west] at (0.75*\thedim+0.1, 0.5*\thedim) {$i$};
      \node[scale=0.8, anchor=south] at (0.5*\thedim, 0.75*\thedim+0.1) {$i$};
      \node[scale=0.8, anchor=east] at (0.25*\thedim-0.1, 0.5*\thedim) {$i$};
      \node[scale=0.8, anchor=north] at (0.5*\thedim, 0.25*\thedim-0.1) {$i$};

      \draw[->, line width=1.2pt, blue] (1.5*\thedim, 0.25*\thedim) -- ++(0, 0.5*\thedim);
      \draw[->, line width=1.2pt, red] (1.25*\thedim, 0.5*\thedim) -- ++(0.5*\thedim, 0);
      \node[scale=0.8, anchor=west] at (1.75*\thedim+0.1, 0.5*\thedim) {$i$};
      \node[scale=0.8, anchor=south] at (1.5*\thedim, 0.75*\thedim+0.1) {$j$};
      \node[scale=0.8, anchor=east] at (1.25*\thedim-0.1, 0.5*\thedim) {$i$};
      \node[scale=0.8, anchor=north] at (1.5*\thedim, 0.25*\thedim-0.1) {$j$};

      \draw[->, line width=1.2pt, red] (2.5*\thedim, 0.25*\thedim) -- ++(0, 0.5*\thedim);
      \draw[->, line width=1.2pt, blue] (2.25*\thedim, 0.5*\thedim) -- ++(0.5*\thedim, 0);
      \node[scale=0.8, anchor=west] at (2.75*\thedim+0.1, 0.5*\thedim) {$j$};
      \node[scale=0.8, anchor=south] at (2.5*\thedim, 0.75*\thedim+0.1) {$i$};
      \node[scale=0.8, anchor=east] at (2.25*\thedim-0.1, 0.5*\thedim) {$j$};
      \node[scale=0.8, anchor=north] at (2.5*\thedim, 0.25*\thedim-0.1) {$i$};

      \draw[->, line width=1.2pt, red] (3.25*\thedim, 0.5*\thedim) -- ++(0.25*\thedim, 0) -- ++(0,0.25*\thedim);
      \draw[->, line width=1.2pt, blue] (3.5*\thedim, 0.25*\thedim) -- ++(0, 0.25*\thedim) -- ++(0.25*\thedim, 0);
      \node[scale=0.8, anchor=west] at (3.75*\thedim+0.1, 0.5*\thedim) {$j$};
      \node[scale=0.8, anchor=south] at (3.5*\thedim, 0.75*\thedim+0.1) {$i$};
      \node[scale=0.8, anchor=east] at (3.25*\thedim-0.1, 0.5*\thedim) {$i$};
      \node[scale=0.8, anchor=north] at (3.5*\thedim, 0.25*\thedim-0.1) {$j$};

      \draw[->, line width=1.2pt, blue] (4.25*\thedim, 0.5*\thedim) -- ++(0.25*\thedim, 0) -- ++(0,0.25*\thedim);
      \draw[->, line width=1.2pt, red] (4.5*\thedim, 0.25*\thedim) -- ++(0, 0.25*\thedim) -- ++(0.25*\thedim, 0);
      \node[scale=0.8, anchor=west] at (4.75*\thedim+0.1, 0.5*\thedim) {$i$};
      \node[scale=0.8, anchor=south] at (4.5*\thedim, 0.75*\thedim+0.1) {$j$};
      \node[scale=0.8, anchor=east] at (4.25*\thedim-0.1, 0.5*\thedim) {$j$};
      \node[scale=0.8, anchor=north] at (4.5*\thedim, 0.25*\thedim-0.1) {$i$};

      \draw[xstep=\thedim, ystep=0.35*\thedim] (0,0) grid (5*\thedim, -0.35*\thedim);

      \newcommand{\they}{-0.175*\thedim}

      \node[scale=1] at (0.5*\thedim, \they) {$1$};
      \node[scale=1] at (1.5*\thedim, \they) {$qb^{\shortrightarrow}$};
      \node[scale=1] at (2.5*\thedim, \they) {$b^{\shortrightarrow}$};
      \node[scale=1] at (3.5*\thedim, \they) {$1-qb^{\shortrightarrow}$};
      \node[scale=1] at (4.5*\thedim, \they) {$1-b^{\shortrightarrow}$};

     \end{scope}
\end{tikzpicture}
\caption{The weights for colored S6V written for colors $i$ and $j$ satisfying $i < j$.}
\label{f.colored R weights}
\end{figure}

\begin{remark}
 We note for future use that one can also ``merge'' intervals of colors in a colored system of both ASEP or S6V to obtain a colored system with fewer colors; this projection is preserved by the dynamics, in that evolving the original system for a given amount of time and merging the colors is the same as evolving the merged system directly.
\end{remark}

Next consider a colored height function $h_0:\Z^2\to\Z$. This is a valid initial condition for colored S6V, with the associated initial particle configuration $\eta_0:\Z\to\Z\cup\{-\infty\}$ given for all $y\in\Z$ by $\eta_{0}(y) = \min\{x: h_0(x,y) - h_0(x+1,y) = 1\}$, with $\min \emptyset = -\infty$. As in colored ASEP, informally, $h_0(x,y)$ is, up to centering (which may be infinite, in which case this description is only heuristic), the number of particles of color at least $x$ which are horizontally entering the infinite ray formed by vertices $(1,z)$ over $z>y$. 

We denote the colored height function at a time $t\in\N$ by $(x,y)\mapsto h^{\mrm{S6V}}(h_0; x,y,t)$ and define it iteratively by $h^{\mrm{S6V}}(h_0; x,y,0) = h_0(x,y)$ for all $x,y\in\Z$, and, for $t\in\N$, by
\begin{align*}
h^{\mrm{S6V}}(h_0; x,y,t) = h^{\mrm{S6V}}(h_0; x,y,t-1) + \one_{b_{(t,y)}\geq x}.
\end{align*}

\subsubsection{Corollary of main result for colored models}

For a colored height function, we define the set of colors in $h_0$ by $\{x: y\mapsto h_0(x,y) \text{ is not constant}\}$, and the number of colors in $h_0$ by the cardinality of this set. A finite color height function is a colored height function $h_0:\Z^2\to\Z$ that contains only finitely many colors. In the below, given a sequence of colored height function $h_0^\varepsilon:\Z^2\to\Z$ indexed by $\varepsilon$ and fixing parameters for ASEP or S6V as in Assumption~\ref{as.master assumption}, we define $\h_0^\varepsilon(x,\bm\cdot)$ for each $x\in\Z$ by applying the transformation \eqref{e.rescaled h for ASEP} or \eqref{e.rescaled h for S6V} (for ASEP or S6V, respectively) to the function $y\mapsto h_0^\varepsilon(x,y)$, and analogously for $\h^{\mrm{S6V},\varepsilon}(\h_0^\varepsilon; x,y,t)$ and its ASEP counterpart; in other words, no rescaling is done on the $x$ argument.

By the observation in Remark~\ref{r.colroed to basic coupling} about the relation of colored models and evolution under the basic coupling for multiple initial conditions for the uncolored model, we obtain the following result on the scaling limit of colored ASEP and S6V with general initial condition as an immediate corollary of Theorem~\ref{t.main kpz fixed point convergence}.

\begin{corollary}
Fix parameters for colored ASEP or colored S6V as in Assumption~\ref{as.master assumption} and $k\in\N$. Suppose that $\smash{h_0^{\varepsilon}}:\Z^2\to \Z$ is a sequence (indexed by $\varepsilon>0$) of initial colored height functions for colored ASEP or S6V satisfying Assumption~\ref{as.master assumption}, and suppose each $h_0^\varepsilon$ contains $k$ colors, denoted $x_1, \ldots, x_k$. Let $\smash{\h_0}(x_i, \bm\cdot)$ be the limit of $\smash{\h_0^{\varepsilon}}(x_i, \bm\cdot)$ for each $i\in\intint{1,k}$ as $\varepsilon\to 0$. Let $\mc T\subseteq (0,\infty)$ be a countable set. Then, under the colored evolutions, for $* = $ ASEP or S6V,
$$\h^{*,\varepsilon}(\h_0^{\varepsilon}; x_i, \bm\cdot, t) \stackrel{d}{\to} \h(\h_0; x_i, \bm\cdot, t) \text{ as } \varepsilon\to 0 \text{ jointly over } t\in\mc T \text{ and } i\in\intint{1,k}$$
under the topology of uniform convergence on compact sets, where the righthand sides are coupled via using the same directed landscape $\cL$ in \eqref{e.fixed point via landscape}.
\end{corollary}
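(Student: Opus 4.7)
The idea is to reduce the colored statement to the joint basic-coupling statement of Theorem~\ref{t.main kpz fixed point convergence}, exploiting Remark~\ref{r.colroed to basic coupling}, which identifies the colored evolution of $k$ colors with the basic-coupling evolution of $k$ ordered uncolored initial conditions.

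Given the colored initial condition $h_0^\varepsilon:\Z^2\to\Z$ with colors $x_1<x_2<\cdots<x_k$, for each $i\in\intint{1,k}$ I would set $h_0^{(i),\varepsilon}(y):=h_0^\varepsilon(x_i,y)$, which is an honest uncolored height function since the defining monotonicity $h_0^\varepsilon(x_i,y)-h_0^\varepsilon(x_i,y+1)\in\{0,1\}$ is preserved under fixing the first argument. The other defining monotonicity $h_0^\varepsilon(x,y)-h_0^\varepsilon(x+1,y)\in\{0,1\}$ implies that the associated uncolored particle configurations are pointwise ordered, $\eta_{h_0^{(1),\varepsilon}}\le\cdots\le\eta_{h_0^{(k),\varepsilon}}$, so the hypothesis of Remark~\ref{r.colroed to basic coupling} is met. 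That remark then identifies the colored evolution of $h_0^\varepsilon$ with the joint evolution of $(h_0^{(i),\varepsilon})_{i=1}^k$ under the basic coupling, and further identifies the colored height function observed at color $x_i$ with the uncolored height function evolved from $h_0^{(i),\varepsilon}$. After the (deterministic) KPZ rescaling this yields the pathwise identity
\[
\h^{*,\varepsilon}(\h_0^\varepsilon;x_i,\bm\cdot,t)=\h^{*,\varepsilon}(\h_0^{(i),\varepsilon};\bm\cdot,t),\qquad *\in\{\mrm{ASEP},\mrm{S6V}\}.
\]

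By hypothesis each $\h_0^{(i),\varepsilon}$ inherits Assumption~\ref{as.master assumption} from $\h_0^\varepsilon$ and converges locally in UC to $\h_0(x_i,\bm\cdot)$, so Theorem~\ref{t.main kpz fixed point convergence} applies to the family $(h_0^{(i),\varepsilon})_{i=1}^k$ with a common starting time. It gives the joint distributional convergence
\[
\h^{*,\varepsilon}(\h_0^{(i),\varepsilon};\bm\cdot,t)\stackrel{d}{\to}\h(\h_0(x_i,\bm\cdot);\bm\cdot,t)
\]
over $i\in\intint{1,k}$ and $t\in\mc T$ in the topology of uniform convergence on compact sets, with the $k$ limits coupled through a single directed landscape $\cL$ via~\eqref{e.fixed point via landscape}. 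Substituting the previous display gives the Corollary. I do not foresee any real obstacle: all the substance is contained in Theorem~\ref{t.main kpz fixed point convergence}, and the remaining step is the deterministic bookkeeping conversion between the colored evolution and the basic coupling of ordered uncolored initial conditions provided by Remark~\ref{r.colroed to basic coupling}, exactly as suggested in the sentence introducing the Corollary.
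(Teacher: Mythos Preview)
Your proposal is correct and follows exactly the approach the paper indicates: the Corollary is stated as an immediate consequence of Theorem~\ref{t.main kpz fixed point convergence} via Remark~\ref{r.colroed to basic coupling}, and your reduction of the colored height functions $y\mapsto h_0^\varepsilon(x_i,y)$ to ordered uncolored initial conditions under the basic coupling is precisely that translation. (One cosmetic point: with $x_1<\cdots<x_k$ the induced uncolored configurations satisfy $\eta_{h_0^{(1),\varepsilon}}\ge\cdots\ge\eta_{h_0^{(k),\varepsilon}}$ rather than the reverse, but this does not affect the argument.)
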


\subsection{KPZ fixed point properties}\label{s.fixed point properties}

The following is a statement about the localization of the argmax in the definition \eqref{e.fixed point via landscape} of the KPZ fixed point. Its proof is fairly standard using well-known properties of $\cL$ and so is deferred to Appendix~\ref{app.other proofs}.

\begin{lemma}[Maximizer control in KPZ fixed point definition]\label{l.argmax control}
Suppose $\h_0:\R\to\R\cup{-\infty}$ satisfies \eqref{e.initial condition assumption} for some $\lambda>0$. Then for any $J>0$ and $t>0$ there exist positive constants $C$ and $c$ depending on $\lambda, J$, and $t$ such that, for all $M>0$,
\begin{align*}
\P\left(\argmax_{z\in\R}\bigl(\h_0(z) + \cL(z,0;y,t)\bigr) \in [-M,M]  \ \ \forall y\in[-J,J]\right) \geq 1- C\exp(-cM^{3/2}).
\end{align*}
\end{lemma}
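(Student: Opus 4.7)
The plan is to combine a uniform-in-$y$ lower bound on $\h(\h_0, 0; y, t)$ coming from the hypothesis $\sup_{x \in [-\lambda,\lambda]} \h_0(x) \geq -\lambda$ with an upper bound on $\h_0(z) + \cL(z, 0; y, t)$ for $|z| > M$ coming from the at-most-linear growth $\h_0(z) \leq \lambda(1+|z|)$ and the parabolic decay of the directed landscape. If these two bounds can be arranged so that the upper is strictly below the lower on an event of probability at least $1 - C e^{-c M^{3/2}}$, then on that event no $z$ with $|z| > M$ can be an argmax for any $y \in [-J,J]$, which is the claim.

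For the lower bound, upper semi-continuity of $\h_0$ produces a point $x_0 \in [-\lambda, \lambda]$ with $\h_0(x_0) \geq -\lambda$. Using the Airy sheet scaling $\cL(x, 0; y, t) \stackrel{d}{=} t^{1/3} \S(xt^{-2/3}; yt^{-2/3})$ together with known lower-tail bounds for the Airy sheet, one gets, uniformly in $y \in [-J,J]$ and $x_0 \in [-\lambda,\lambda]$,
\begin{equation*}
\P\Bigl(\inf_{y \in [-J, J]} \cL(x_0, 0; y, t) \leq -M/2\Bigr) \leq C \exp(-c M^{3/2}),
\end{equation*}
so that on the complement $\h(\h_0, 0; y, t) \geq -\lambda - M/2$ for all $y \in [-J,J]$. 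For the upper bound, set $\tilde\cL(z; y) := \cL(z, 0; y, t) + (z-y)^2/t$. For $|z| \geq 2J$ and $y \in [-J, J]$ one has $(z-y)^2 \geq z^2/2$, hence
\begin{equation*}
\h_0(z) + \cL(z, 0; y, t) \leq \lambda(1 + |z|) - \frac{|z|^2}{2t} + \tilde\cL(z; y).
\end{equation*}
Standard tail bounds for the directed landscape (cf.\ \cite[Section 10]{dauvergne2018directed} and \cite{dauvergne2021scaling}) combined with its parabolic stationarity give, for dyadic shells $R \leq |z| \leq 2R$ and a sufficiently small $\eta > 0$,
\begin{equation*}
\P\Bigl(\sup_{y \in [-J, J]} \sup_{|z| \in [R, 2R]} \tilde\cL(z; y) > \eta R^2/t\Bigr) \leq C \exp\bigl(-c (R^2/t)^{3/2}\bigr).
\end{equation*}
Summing over $R = 2^k M$ for $k \geq 0$ and combining with the lower bound, one obtains, with probability at least $1 - C\exp(-cM^{3/2})$,
\begin{equation*}
\sup_{y \in [-J, J]} \sup_{|z| > M} \bigl[\h_0(z) + \cL(z, 0; y, t)\bigr] \leq -\frac{M^2}{8t} + K_{\lambda, J, t},
\end{equation*}
which for $M$ large enough relative to $\lambda, J, t$ is strictly less than $-\lambda - M/2 \leq \h(\h_0, 0; y, t)$, forcing every argmax into $[-M, M]$.

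The main technical obstacle is the dyadic tail bound on $\tilde\cL$ uniformly in $y \in [-J, J]$ with the sharp stretched-exponential $3/2$ exponent. This reduces to the one-point Tracy--Widom upper tail for $\cL$ combined with H\"older-continuity estimates in $(z, y)$, both standard in the directed landscape literature, but the union bound across the continuum of $(z, y)$ pairs must be executed carefully so as not to degrade the $3/2$ exponent in the final bound.
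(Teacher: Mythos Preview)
Your approach is correct and follows the same two-part strategy as the paper: a lower bound on $\h(\h_0,0;y,t)$ coming from the point $x_0\in[-\lambda,\lambda]$ with $\h_0(x_0)\geq -\lambda$, and an upper bound on $\sup_{|z|>M}(\h_0(z)+\cL(z,0;y,t))$ coming from the linear growth of $\h_0$ and the parabolic decay of $\cL$. The dyadic decomposition you outline is a perfectly valid way to handle the latter supremum.

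The one simplification you are missing, and which entirely eliminates what you flag as the ``main technical obstacle'', is that the argmax in question is \emph{monotone in $y$}: the map $y\mapsto \argmax_{z}(\h_0(z)+\cL(z,0;y,t))$ is almost surely non-decreasing (see \cite[Lemma 3.8]{IGCI}). This means that to control the argmax uniformly over $y\in[-J,J]$ it suffices to control it at the two endpoints $y=-J$ and $y=J$. The paper exploits exactly this, reducing immediately to a single fixed $y$, after which neither the lower tail of $\cL(x_0,0;y,t)$ nor the upper tail of $\sup_{|z|>M}(\cL(z,0;y,t)+\lambda|z|)$ requires any uniformity in $y$; both become one-line citations of known one-parameter tail bounds. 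Your direct approach via uniform-in-$(z,y)$ Hölder-continuity estimates would also work, but the monotonicity trick is both cleaner and removes any risk of losing the $3/2$ exponent in the union bound.
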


Next we give two corollaries of Lemma~\ref{l.argmax control}. The first asserts that if two initial conditions for the KPZ fixed point agree on a large interval, then the fixed points (with the coupling via the directed landscape as in \eqref{e.fixed point via landscape}) agree also at a fixed time on a smaller interval with high probability. It is in fact an immediate consequence of the variational formula \eqref{e.fixed point via landscape} for the fixed point and Lemma~\ref{l.argmax control}, so its proof is omitted.

\begin{corollary}[Agreement of KPZ fixed points]\label{c.agreement of fixed points}
Let $\h_0:\R\to\R\cup\{-\infty\}$ and satisfy \eqref{e.initial condition assumption} for some fixed $\lambda>0$ and let $J>0$. Then there exist $M_0 = M_0(\lambda,J)>0$ and $c = c(\lambda,J)>0$ such that the following holds. Suppose $\tilde \h_0:\R\to\R\cup\{-\infty\}$ is such that, for some $M>M_0$, $\tilde \h_0(y) = \h_0(y)$ for all $y\in[-M,M]$ and
\begin{align*}
 \tilde\h_0(y) \leq \lambda|y| + M \text{ for all }|y|>M.
 \end{align*}
Then with probability at least $1-\exp(-cM^{3/2})$, for all $|x|\leq J$,
\begin{align*}
\sup_{y\in \R}\Bigl(\h_0(y) + \cL(y,0;x,1)\Bigr) = \sup_{y\in\R}\left(\tilde\h_0(y) + \cL(y,0;x,1)\right).
\end{align*}
\end{corollary}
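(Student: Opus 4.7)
The plan is to use Lemma~\ref{l.argmax control} to localize the argmax of $y \mapsto \h_0(y) + \cL(y,0;x,1)$ inside $[-M/2, M/2]$ for all $|x|\leq J$, and separately to use parabolic decay of the directed landscape to show that the supremum of $y \mapsto \tilde\h_0(y) + \cL(y,0;x,1)$ is attained inside $[-M, M]$. Since $\h_0$ and $\tilde\h_0$ agree on $[-M, M]$ by hypothesis, the two global suprema must then coincide.

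For the first step, applying Lemma~\ref{l.argmax control} to $\h_0$ with the given $J$, $t=1$, and with the lemma's parameter set to $M/2$ yields an event $\msf A$ of probability at least $1 - C\exp(-cM^{3/2})$ (after absorbing constants) on which $\argmax_{y\in\R}\bigl(\h_0(y)+\cL(y,0;x,1)\bigr) \subseteq [-M/2, M/2]$ for every $|x|\leq J$. Since $\tilde\h_0 \equiv \h_0$ on $[-M, M]\supseteq [-M/2, M/2]$, on $\msf A$ we obtain
\[
\sup_{y\in\R}\bigl(\h_0(y)+\cL(y,0;x,1)\bigr) = \sup_{y\in[-M/2,M/2]}\bigl(\tilde\h_0(y)+\cL(y,0;x,1)\bigr) \leq \sup_{y\in\R}\bigl(\tilde\h_0(y)+\cL(y,0;x,1)\bigr),
\]
giving one direction of the desired identity.

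For the reverse inequality, I would appeal to a parabolic upper bound for $\cL$: on an event $\msf B$ of probability at least $1-C'\exp(-c' M^{3/2})$ one has $\cL(y,0;x,1) \leq -\tfrac{1}{2}(y-x)^2 + M/4$ uniformly for $|x|\leq J$ and $|y|\geq M$. This is a standard consequence of the spatial tail estimates on the time-$1$ marginal of $\cL$ (the Airy sheet), whose recentering $\cL(y,0;x,1)+(y-x)^2$ has Tracy--Widom-type upper tails. Combined with $\tilde\h_0(y) \leq \lambda |y| + M$ for $|y|>M$, on $\msf B$ the sup of $\tilde\h_0(y) + \cL(y,0;x,1)$ over $|y|>M$ and $|x|\leq J$ is at most $\sup_{|y|>M}\bigl(\lambda|y| + M - \tfrac{1}{2}(|y|-J)^2\bigr) + M/4$, which is bounded above by $-M$ once $M \geq M_0 = M_0(\lambda, J)$. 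On a further high-probability event $\msf C$ controlling a single value of $\cL$ from below, using the lower bound $\sup_{y\in[-\lambda,\lambda]}\h_0(y)\geq -\lambda$ from \eqref{e.initial condition assumption}, the sup of $\h_0 + \cL(\bm\cdot, 0; x, 1)$ over $y\in[-\lambda, \lambda]$ exceeds $-M+1$ for every $|x|\leq J$. Thus on $\msf A \cap \msf B \cap \msf C$ the supremum of $\tilde\h_0 + \cL(\bm\cdot, 0; x, 1)$ is attained inside $[-M, M]$, where $\tilde\h_0 = \h_0$, which together with step one yields the claimed equality.

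The main technical input is the parabolic upper bound for $\cL$ at rate $\exp(-cM^{3/2})$, matching the rate in Lemma~\ref{l.argmax control}; once this is in hand, a union bound over $\msf A$, $\msf B$, and $\msf C$ delivers the final probability bound. I do not anticipate any substantial obstacle since each ingredient is either Lemma~\ref{l.argmax control} itself or a now-standard tail estimate for the directed landscape.
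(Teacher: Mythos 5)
Your proof is correct, but it takes a slightly longer route than the paper intends. The corollary is stated in the paper as an immediate consequence of the variational formula and Lemma~\ref{l.argmax control}, and the quick proof is to apply Lemma~\ref{l.argmax control} \emph{twice}: once to $\h_0$ (as you do in step one) and once directly to $\tilde\h_0$. The observation you missed is that $\tilde\h_0$ itself satisfies \eqref{e.initial condition assumption} with $\lambda' = \lambda+1$, uniformly in $M$: on $[-M,M]$ it equals $\h_0$, and for $|y|>M$ one has $\tilde\h_0(y) \leq \lambda|y| + M < \lambda|y| + |y| + (\lambda+1) = (\lambda+1)(1+|y|)$, while the lower bound $\sup_{[-\lambda,\lambda]}\tilde\h_0 = \sup_{[-\lambda,\lambda]}\h_0 \geq -\lambda$ carries over provided $M_0 \geq \lambda$. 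So Lemma~\ref{l.argmax control} applied to $\tilde\h_0$ already yields (with constants depending only on $\lambda$ and $J$) that $\argmax_z(\tilde\h_0(z)+\cL(z,0;x,1)) \in [-M,M]$ simultaneously over $|x|\leq J$ with probability $\geq 1 - C\exp(-cM^{3/2})$, which with your step one and a union bound finishes the proof in two lines.

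Your step two, via events $\msf B$ and $\msf C$, effectively re-derives the content of Lemma~\ref{l.argmax control} for the function $\tilde\h_0$: the parabolic upper bound on $\cL$ for $|y| \geq M$ plays the role of the $\sup_{|z|>M}(\cL(z,0;y,t)+\lambda|z|)$ tail bound in that lemma's proof, and $\msf C$ reproduces the lower bound using $\sup_{[-\lambda,\lambda]}\h_0 \geq -\lambda$. This is mathematically fine and does give a self-contained argument, but it duplicates existing machinery; recognizing that $\tilde\h_0 \in \mathrm{UC}_{\lambda+1}$ avoids all of it. One small imprecision: $\msf C$ must control $\inf_{|x|\leq J}\cL(z_0,0;x,1)$ (a range of values, not ``a single value of $\cL$''), but the required tail is standard and the conclusion is unaffected.
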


To state the second corollary we introduce some notation. For any upper semi-continuous $f:\R\to\R\cup\{-\infty\}$, $t>0$, $I>0$, and $y\in\R$, let 
\begin{align}\label{e.restricted supremum notation}
Q_t^{\mrm{S6V},\varepsilon}(f, I, y) = \sup_{z\in[-I,I]}\left(f(z) + \cL^{\mrm{S6V}, \varepsilon}(z,0; y,t)\right),
\end{align}
and the analogous definition for $Q_t^{\mrm{ASEP},\varepsilon}$. Similarly, we define $Q_t(\h_0, I, y) := \sup_{x\in[-I,I]}(\h_0(z) + \cL(z,0;y,t))$. Observe that all these quantities are non-decreasing in $I$.

\begin{corollary}\label{c.maximizer location claim}
Let $\rho, \delta, J,\lambda$, $t$, and $R$ be positive. Suppose $\{\h_0^\varepsilon\}_{\varepsilon>0}$ is a sequence of continuous functions satisfying \eqref{e.initial condition assumption} for all $\varepsilon>0$ with the fixed value of $\lambda$ and which converges to a continuous function $\h_0:\R\to\R$. Then there exist $\varepsilon_0 = \varepsilon_0(\rho, \delta, J,\lambda, t, R, \h_0, \{\h_0^\varepsilon\}_{\varepsilon>0})>0$ and  $M_0=M_0(\rho,J,\lambda, t)$ such that, for all $0<\varepsilon<\varepsilon_0$ and $M>M_0$,
\begin{align*}
\P\Bigl(Q_t^{\mrm{S6V},\varepsilon}(\h_0^\varepsilon, M, y) \geq Q_t^{\mrm{S6V},\varepsilon}(\h_0^\varepsilon, R, y) - \delta \ \ \forall y\in[-J,J]\Bigr) > 1-\rho.
\end{align*}
The same also holds for $Q_t^{\mrm{ASEP},\varepsilon}$.
\end{corollary}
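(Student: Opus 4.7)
The plan is to combine the directed landscape convergence of Theorem~\ref{t.s6v airy sheet} (and Theorem~\ref{t.asep airy sheet} for the ASEP case) with the argmax localization provided by Lemma~\ref{l.argmax control} applied to the limit $\h_0$. The intuition is that for the KPZ fixed point started from $\h_0$, the maximizer in the variational formula \eqref{e.fixed point via landscape} lies in a bounded window with high probability; the continuity of the supremum operation under uniform convergence on compact sets then transfers this localization to the prelimiting sheet $\cL^{\mrm{S6V},\varepsilon}$, from which the claim follows.

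First, I would apply Lemma~\ref{l.argmax control} to $\h_0$ to choose $M_0 = M_0(\rho, J, \lambda, t)$ large enough that, with probability at least $1 - \rho/2$, the argmax of $z \mapsto \h_0(z) + \cL(z,0;y,t)$ lies in $[-M_0/4, M_0/4]$ for every $y \in [-J,J]$. On this event, for any $M > M_0$ one has $M/2 \geq M_0/4$, so $Q_t(\h_0, M/2, y) = \sup_{z \in \R}(\h_0(z) + \cL(z,0;y,t)) \geq Q_t(\h_0, R, y)$ uniformly in $y \in [-J,J]$. In particular, the (open) event
\begin{align*}
\msf G := \Bigl\{\sup_{y \in [-J, J]}\bigl(Q_t(\h_0, R, y) - Q_t(\h_0, M/2, y)\bigr) < \delta/2\Bigr\}
\end{align*}
has probability at least $1 - \rho/2$.

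Second, I would invoke Skorohod representation together with Theorem~\ref{t.s6v airy sheet} to pass to a coupling in which $\cL^{\mrm{S6V}, \varepsilon}(\bm\cdot, 0; \bm\cdot, t) \to \cL(\bm\cdot, 0; \bm\cdot, t)$ almost surely, uniformly on compacts; combined with the deterministic locally uniform convergence $\h_0^\varepsilon \to \h_0$, this yields uniform convergence on $[-J,J]$ of both $Q_t^{\mrm{S6V},\varepsilon}(\h_0^\varepsilon, M/2, \bm\cdot)$ and $Q_t^{\mrm{S6V},\varepsilon}(\h_0^\varepsilon, R, \bm\cdot)$ to their limits. By bounded convergence I can choose $\varepsilon_0$ so that each of these uniform differences is at most $\delta/4$ on an event of probability at least $1-\rho/4$. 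On the intersection of these two events with $\msf G$ (of total probability $\geq 1-\rho$), using monotonicity of $Q_t^{\mrm{S6V},\varepsilon}(\h_0^\varepsilon, \bm\cdot, y)$ in the radius argument,
\begin{align*}
Q_t^{\mrm{S6V},\varepsilon}(\h_0^\varepsilon, M, y)
&\geq Q_t^{\mrm{S6V},\varepsilon}(\h_0^\varepsilon, M/2, y) \geq Q_t(\h_0, M/2, y) - \delta/4 \\
&> Q_t(\h_0, R, y) - 3\delta/4 \geq Q_t^{\mrm{S6V},\varepsilon}(\h_0^\varepsilon, R, y) - \delta,
\end{align*}
for all $y \in [-J,J]$. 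The same proof applies verbatim to $Q_t^{\mrm{ASEP},\varepsilon}$ using Theorem~\ref{t.asep airy sheet} in place of Theorem~\ref{t.s6v airy sheet}. The only delicate point is the passage from distributional sheet convergence to a uniform probability bound on the two sup-differences, which is handled by the Skorohod coupling plus bounded convergence (or equivalently by a direct Portmanteau argument on the open events defined by these continuous functionals of the sheet); everything else is a routine chase of inequalities.
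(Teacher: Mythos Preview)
Your approach is essentially the paper's: localize the limiting argmax via Lemma~\ref{l.argmax control} and transfer back to the prelimit using sheet convergence (Theorems~\ref{t.asep airy sheet}/\ref{t.s6v airy sheet}); the paper does this with a one-line Portmanteau argument on the open event rather than Skorohod, but that difference is cosmetic.

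There is one small slip worth fixing. In your second step you pass through $Q_t^{\mrm{S6V},\varepsilon}(\h_0^\varepsilon, M/2, \bm\cdot)$ and control its uniform distance to $Q_t(\h_0, M/2, \bm\cdot)$; but this requires uniform convergence of $\cL^{\mrm{S6V},\varepsilon}$ and $\h_0^\varepsilon$ on the $M$-dependent box $[-M/2,M/2]\times[-J,J]$, so the $\varepsilon_0$ you extract would depend on $M$, contradicting the statement. The fix is immediate and is exactly what the paper does: use monotonicity in the radius at the outset to replace $M/2$ by the \emph{fixed} radius $M_0$ (your argmax already lies in $[-M_0/4,M_0/4]$, so $Q_t(\h_0,M_0,y)$ attains the full supremum), after which only convergence on the fixed compact $[-\max(M_0,R),\max(M_0,R)]\times[-J,J]$ is needed and $\varepsilon_0$ is uniform in $M>M_0$.
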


\begin{proof}
We will use $Q^\varepsilon_t$ to denote $Q^{\mrm{S6V},\varepsilon}_t$ and $Q^{\mrm{ASEP},\varepsilon}_t$ so as to give a simple proof with unified notation. By the monotonicity noted after \eqref{e.restricted supremum notation}, it is sufficient to prove that, for some $M_0 = M_0(\rho, J, \lambda, t)$,
\begin{align*}
\lim_{\varepsilon\to 0} \P\Bigl(Q_t^\varepsilon(\h_0^\varepsilon, M_0, y) \geq Q_t^\varepsilon(\h_0^\varepsilon, R, y) - \delta \ \ \forall y\in[-J,J]\Bigr) > 1 - \rho.
\end{align*}
Observe that by the weak convergence of $\cL^{\mrm{S6V},\varepsilon}$ and $\cL^{\mrm{ASEP},\varepsilon}$ to $\cL$ (Theorems~\ref{t.asep airy sheet} and \ref{t.s6v airy sheet}) and of $\h_0^\varepsilon$ to $\h_0$, both on compact sets, the Portmanteau theorem yields that the lefthand side equals
\begin{align*}
\P\Bigl(Q_t(\h_0, M_0, y) \geq Q_t(\h_0, R, y) - \delta \ \ \forall y\in[-J,J]\Bigr) \geq \P\Bigl(Q_t(\h_0, M_0, y) \geq Q_t(\h_0, R, y) \ \ \forall y\in[-J,J]\Bigr),
\end{align*}
where the inequality is simply due to the inclusion of the event on the righthand side  in the event on the lefthand side. The latter probability equals
\begin{align*}
\P\left(\argmax_{z\in\R}\bigl(\h_0(z) + \cL(z,0;y,t)\bigr) \in [-M_0,M_0]  \ \ \forall y\in[-J,J]\right).
\end{align*}
Thus the proof is complete by Lemma~\ref{l.argmax control}.
\end{proof}

\section{KPZ fixed point convergence under stationary initial condition}\label{s.convergence under bernoulli}

In this section we prove the convergence of ASEP and S6V started from stationary, i.e., Bernoulli, initial condition to the KPZ fixed point. The precise statement is given as Proposition~\ref{p.bernoulli convergence}, and its proof appears in Section~\ref{s.process convergence for bernoulli proof}. Here and below, for $\rho\in(0,1)$, Bernoulli($\rho$) refers to the distribution on $\{0,1\}$ which assigns probability $\rho$ to $1$ and $1-\rho$ to $0$ and Bernoulli($\rho$) initial condition refers to the initial condition for S6V or ASEP in which the initial particle/arrow configuration $\eta_0 = \{\eta_0(x)\}_{x\in\Z}$ is defined as an \iid collection of Bernoulli($\rho$) random variables for all $x\in\Z$.

\begin{proposition}[Convergence to KPZ fixed point under Bernoulli]\label{p.bernoulli convergence}
Fix $t>0$, $q\in[0,1)$, $z\in(0,1)$ and $\alpha\in(z,z^{-1})$. Let $\lambda\in\R$ and define $\rho_{\varepsilon,\lambda} := |(\mu^{\mrm{S6V}})'(\alpha)| + \lambda \sigma^{\mrm{S6V}}(\alpha)\beta^{\mrm{S6V}}(\alpha)^{-1}\varepsilon^{1/3}$. 
 Let $h_0^\varepsilon$ be the height function associated to the Bernoulli($\rho_{\varepsilon,\lambda}$) initial condition with $h^\varepsilon_0(0) = 0$. Let $B_\lambda:\R\to\R$ equal a rate 2 two-sided Brownian motion with drift $\lambda$ started at $(0,0)$. Then $(\h_0^\varepsilon, \h^{\mrm{S6V},\varepsilon}(\h_0^\varepsilon;\bm\cdot, t))$ converges weakly to $(B_\lambda,\h(B_\lambda; \bm\cdot, t))$ as $\varepsilon\to0$ in the topology of uniform convergence on compact sets.

 The same holds for ASEP, for fixed $t>0$, $q\in[0,1)$, $\alpha\in (-1,1)$, $\lambda\in\R$, and $\rho_{\varepsilon,\lambda} := |(\mu^{\mrm{ASEP}})'(\alpha)| + \lambda\sigma^{\mrm{ASEP}}(\alpha)\beta^{\mrm{ASEP}}(\alpha)^{-1}\varepsilon^{1/3}$.
\end{proposition}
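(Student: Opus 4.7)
The plan is to combine a prelimiting variational lower bound coming from height monotonicity with a matching one-point marginal limit coming from stationary current fluctuations, and then upgrade to the process level using Bernoulli stationarity. The argument is identical for ASEP and S6V; I present it for S6V, with ASEP using Lemmas~\ref{l.asep height monotonicity} and \ref{l.prelimiting variation inequality asep} in place of their S6V counterparts. Since $h_0^\varepsilon$ has i.i.d.\ Bernoulli$(\rho_{\varepsilon,\lambda})$ increments (up to sign) and $\rho_{\varepsilon,\lambda}$ is tuned precisely so that the rescaling \eqref{e.rescaled h for S6V} matches the scaling of Donsker's invariance principle, $\h_0^\varepsilon \xrightarrow{d} B_\lambda$ in $\mc C(\R,\R)$. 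Because the Bernoulli initial configuration is independent of the dynamical randomness $\{X_v^\shortuparrow, X_v^\shortrightarrow\}$, this convergence is joint with the directed landscape convergence of Theorem~\ref{t.s6v airy sheet}, giving $(\h_0^\varepsilon, \cL^{\mrm{S6V},\varepsilon}(\bm\cdot,0;\bm\cdot,t)) \xrightarrow{d} (B_\lambda, \cL(\bm\cdot,0;\bm\cdot,t))$ with the two limits independent, exactly as in the claimed limiting coupling.

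Next, by Lemma~\ref{l.prelimiting variation inequality s6v}, with probability $1 - Ce^{-c\varepsilon^{-1/6}}$,
\begin{align*}
\h^{\mrm{S6V},\varepsilon}(\h_0^\varepsilon; x, t) \geq \sup_{y\in[-\varepsilon^{-1},\varepsilon^{-1}]} \bigl(\h_0^\varepsilon(y) + \cL^{\mrm{S6V},\varepsilon}(y,0;x,t)\bigr) - \sigma(\alpha)^{-1}\varepsilon^{1/6}
\end{align*}
for all $x$ in any fixed compact set. I truncate the supremum to $y\in[-R,R]$; the continuous mapping theorem applied to the joint convergence above yields convergence of the truncated supremum (jointly over $x$ in compact sets) to $\sup_{y\in[-R,R]}(B_\lambda(y) + \cL(y,0;x,t))$. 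Sending $R\to\infty$ via Lemma~\ref{l.argmax control} for $B_\lambda$ (valid with a random linear-growth constant, justified by Gaussian tail estimates on the supremum of $B_\lambda$ on compact sets) and the prelimit Corollary~\ref{c.maximizer location claim} (applied on the high-probability event that $\h_0^\varepsilon$ obeys a deterministic linear bound, from standard random walk estimates) shows this truncation is lossless, so the right-hand side above converges jointly to $\h(B_\lambda;x,t)$. On the other hand, by stationarity of Bernoulli under S6V and the one-point current fluctuation limit for stationary initial data of \cite{aggarwal2018current}, $\h^{\mrm{S6V},\varepsilon}(\h_0^\varepsilon; x, t) \xrightarrow{d} \h(B_\lambda;x,t)$ marginally at each fixed $x$.

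A random variable that dominates another almost surely and has the same marginal distribution is equal to it almost surely (run through $t\in\Q$ in $\{A>t\geq B'\}$). Applied to any joint subsequential limit at a fixed $x$, this forces the limit of the pair (height function, variational lower bound) to equal $(\h(B_\lambda;x,t),\h(B_\lambda;x,t))$ almost surely, jointly with $(\h_0^\varepsilon,\cL^{\mrm{S6V},\varepsilon})\to(B_\lambda,\cL)$. To upgrade this pointwise-in-$x$ statement to process-level convergence, note that Bernoulli stationarity implies the particle configuration at time $\lfloor\varepsilon^{-1}t\rfloor$ is again i.i.d.\ Bernoulli$(\rho_{\varepsilon,\lambda})$, so $x\mapsto\h^{\mrm{S6V},\varepsilon}(\h_0^\varepsilon;x,t) - \h^{\mrm{S6V},\varepsilon}(\h_0^\varepsilon;0,t)$ is tight in $\mc C(\R,\R)$ and converges to a rate-$2$ Brownian motion with drift $\lambda$ by Donsker. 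Combined with the joint one-point convergence above at a countable dense set of $x$ and continuity of both $\h^{\mrm{S6V},\varepsilon}(\h_0^\varepsilon;\bm\cdot,t)$ and $\h(B_\lambda;\bm\cdot,t)$, this pins down every subsequential limit of $(\h_0^\varepsilon,\h^{\mrm{S6V},\varepsilon}(\h_0^\varepsilon;\bm\cdot,t))$ to be $(B_\lambda,\h(B_\lambda;\bm\cdot,t))$.

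The main obstacle is ensuring that the prelimit variational expression converges to the limiting one \emph{jointly} with the height function: this requires localizing the argmax under a random (Brownian-growth) initial condition, which is not directly covered by the deterministic linear-growth hypotheses of Lemma~\ref{l.argmax control} and Corollary~\ref{c.maximizer location claim} and so must be handled by conditioning on high-probability events. Absorbing the $\varepsilon^{1/6}$ error and the $Ce^{-c\varepsilon^{-1/6}}$ exceptional probability from Lemma~\ref{l.prelimiting variation inequality s6v} is then routine.
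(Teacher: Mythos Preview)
Your proposal is correct and follows essentially the same route as the paper: tightness from Bernoulli stationarity plus Donsker, the prelimiting variational lower bound from Lemma~\ref{l.prelimiting variation inequality s6v}, one-point convergence from \cite{aggarwal2018current}, and the ``$X\geq Y$ a.s.\ with $X\stackrel{d}{=}Y$ forces $X=Y$'' step (Lemma~\ref{l.X=Y}) applied along a countable dense set and extended by continuity.

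One simplification worth noting: the ``main obstacle'' you flag---localizing the prelimit argmax under a random initial condition via Corollary~\ref{c.maximizer location claim}---is self-imposed and the paper sidesteps it entirely. Rather than trying to show the full prelimit supremum converges, the paper keeps the truncation $y\in[-M,M]$ fixed in the prelimiting inequality, passes to the $\varepsilon\to0$ limit by continuous mapping (giving $\g(x)\geq\sup_{|y|\leq M}(B_\lambda(y)+\S(y;x))$ in any subsequential limit $\g$), and only then sends $M\to\infty$. At that stage Lemma~\ref{l.argmax control} applies directly, since $B_\lambda$ almost surely satisfies a linear growth bound with a random constant. This avoids any need to condition on growth events for the random walk $\h_0^\varepsilon$ or to invoke Corollary~\ref{c.maximizer location claim}, whose hypotheses (deterministic $\h_0^\varepsilon$ converging to a fixed continuous limit) do not cleanly fit the Bernoulli setting.
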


The tightness implicit in Proposition~\ref{p.bernoulli convergence} for the height function at rescaled time $t$ follows from the stationarity of the product measure on the particle configuration. Thus the main task is to prove that all subsequential limits are given by the KPZ fixed point started from a drifted Brownian initial condition. For this, given the prelimiting one-sided inequality from Lemmas~\ref{l.prelimiting variation inequality asep} and \ref{l.prelimiting variation inequality s6v}, it suffices to establish that the limiting one-point distributions coincide, by an argument closely following that of \cite[Corollary 2.12]{aggarwal2024scaling}. The one-point convergence is available in the literature and we collect them in the next subsection.

\subsection{One-point convergence under Bernoulli initial condition}\label{s.one-point convergence bernoulli}

The one-point distribution of $\h(\h_0; \bm\cdot,1)$ with $\h_0:\R\to\R$ given by a rate 2 two-sided Brownian motion of drift $\lambda$ started at $(0,0)$ is (up to a shift) the Baik-Rains distribution of parameter $\frac{\lambda}{2}+x$, denoted $\mrm{BR}_{\frac{\lambda}{2}+x}$. As the details of the definition of the Baik-Rains distribution do not play a role in our argument, we do not give it here, but it can be found in \cite[eq. (1.20)]{ferrari2006scaling} (where $F_w$ is the distribution function of $\mrm{BR}_w$); the original definition in a different form is from \cite[Definition 3]{baik2000limiting}.

\begin{lemma}[One-point distribution of KPZ fixed point under stationary]\label{l.bbp for fixed point}
Let $\lambda\in\R$ and let $\h_0:\R\to\R$ be given by a rate 2 two-sided Brownian motion of drift $\lambda$ started at $(0,0)$. Then for any $x\in\R$, $\h(\h_0; x,1)$ has the law of $X_{x + \frac{1}{2}\lambda} + \lambda x + \frac{1}{4}\lambda^2$, where $X_{x + \frac{1}{2}\lambda}$ is a random variable distributed as $\mrm{BR}_{x + \frac{1}{2}\lambda}$.
\end{lemma}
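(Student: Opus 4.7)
The plan is to reduce the general drifted case to the driftless base case $\h(B;w,1) \stackrel{\smash{d}}{=} X_w$ via stationarity and shift invariance properties of the Airy sheet $\S$, and then invoke the known one-point distribution of the KPZ fixed point from two-sided Brownian initial data.

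The key structural observation is that the \emph{parabolic Airy sheet} $\mathfrak{S}(y;x) := \S(y;x) + (y-x)^2$ is, for each fixed $x$, a stationary process in $y$, and moreover its law as a process in $y$ is the same for every $x$. Indeed, the symmetry $\S(y;x) \stackrel{\smash{d}}{=} \S(x;y)$ together with the standard fact that $\S(0;y) + y^2$ is the stationary Airy$_2$ process shows that $y \mapsto \mathfrak{S}(y;0)$ is stationary, and the diagonal shift invariance $\S(y+a;x+a) \stackrel{\smash{d}}{=} \S(y;x)$ (a consequence of the translation invariance of $\cL$) gives $y\mapsto\mathfrak{S}(y;x) \stackrel{\smash{d}}{=} y\mapsto\mathfrak{S}(y-x;0)$; being a shift of a stationary process, this has the same law as $y\mapsto\mathfrak{S}(y;0)$ for every $x$.

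The main computation is completing the square. Starting from the variational formula $\h(B_\lambda;x,1) = \sup_{y\in\R}(B(y) + \lambda y + \S(y;x))$ and substituting $\S(y;x) = \mathfrak{S}(y;x) - (y-x)^2$, the identity $\lambda y - (y-x)^2 = -(y - x')^2 + \lambda x + \lambda^2/4$ with $x' := x + \lambda/2$ gives
\begin{align*}
\h(B_\lambda;x,1) = \lambda x + \tfrac{\lambda^2}{4} + \sup_{y\in\R}\bigl(B(y) - (y-x')^2 + \mathfrak{S}(y;x)\bigr).
\end{align*}
Because $B$ is independent of $\S$ and $\mathfrak{S}(\cdot;x) \stackrel{\smash{d}}{=} \mathfrak{S}(\cdot;x')$ as processes in $y$, the pairs $(B,\mathfrak{S}(\cdot;x))$ and $(B,\mathfrak{S}(\cdot;x'))$ agree in joint law; taking suprema yields $\h(B_\lambda;x,1) \stackrel{\smash{d}}{=} \lambda x + \lambda^2/4 + \h(B;x',1)$. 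The proof closes by invoking the $\lambda=0$ base case $\h(B;w,1) \stackrel{\smash{d}}{=} X_w$, which is the classical Baik--Rains one-point result at the level of the KPZ fixed point; it follows by combining the convergence of stationary TASEP to the KPZ fixed point from two-sided Brownian initial data (a special case of \cite{matetski2016kpz}) with the one-point limit laws of \cite{baik2000limiting,ferrari2006scaling}. The principal hurdle is identifying the decomposition that makes the stationarity of the parabolic Airy sheet absorb the drift; given this, the rest is routine algebra.
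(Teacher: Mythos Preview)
Your argument is correct and reaches the same conclusion, but the route to the general-$\lambda$ case differs from the paper's. Both proofs share the $\lambda=0$ base case, obtained by combining TASEP convergence to the KPZ fixed point \cite{matetski2016kpz} with the Ferrari--Spohn one-point result \cite{ferrari2006scaling}. For the reduction from general $\lambda$ to $\lambda=0$, the paper simply cites the affine symmetry of the KPZ fixed point \cite[Theorem~4.5~(vi)]{matetski2016kpz}, whereas you reprove that symmetry directly at the level of the variational formula: completing the square and using that $y\mapsto \S(y;x)+(y-x)^2$ has the Airy$_2$ law independently of $x$. Your approach is more self-contained given the paper's own definition \eqref{e.fixed point via landscape} of the fixed point via the landscape, and makes transparent exactly where the shift $x\mapsto x+\tfrac{\lambda}{2}$ and the constants $\lambda x+\tfrac{\lambda^2}{4}$ come from; the paper's approach is shorter because it outsources this computation to a cited symmetry. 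Both are equally valid here.
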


\begin{proof}
By the convergence of the rescaled TASEP (i.e., ASEP with $q=0$) height function under general initial condition to the KPZ fixed point \cite[Theorem 3.13]{matetski2016kpz}, it is enough to identify the one-point distributional limit of the rescaled TASEP height function under Bernoulli initial condition with $\alpha=0$. We start with the case where the drift $\lambda=0$, so the particle configuration is given by \iid Bernoulli($\frac{1}{2}$). By \cite[Theorem~1.6]{aggarwal2018current} (for a statement easily translatable to our situation; the TASEP case of the result that we are using was originally proven in the paper of Ferrari-Spohn, see \cite[Theorem 1.2]{ferrari2006scaling}), for every $x\in\R$,
\begin{align*}
\h^{\mrm{TASEP}, \varepsilon}(\h_0^\varepsilon; x, 1) \stackrel{d}{\to} \mrm{BR}_x,
\end{align*}
where $\h^{\mrm{TASEP},\varepsilon}$ is simply $\h^{\mrm{ASEP},\varepsilon}$ with $q=0$ and $h_0^\varepsilon(0) = 0$ is set.
It is easy to check that $\h_0^{\varepsilon}$ converges weakly to rate 2 two-sided Brownian motion started at $(0,0)$. This establishes the result in the case of $\lambda=0$, and the general case follows from \cite[Theorem 4.5 (vi)]{matetski2016kpz}, which gives a symmetry of the KPZ fixed point when an affine function is added to the initial condition.
\end{proof}

With the above lemma, the one-point convergence of the ASEP and S6V height functions under Bernoulli initial conditions is a consequence of results from \cite{aggarwal2018current}.

\begin{lemma}[One-point convergence under Bernoulli]\label{l.one-point convergence for bernoulli}
Fix $t>0$, $q\in[0,1)$, $z\in(0,1)$ and $\alpha\in(z,z^{-1})$. Let $\lambda\in\R$ and define $\rho_{\varepsilon,\lambda} := |(\mu^{\mrm{S6V}})'(\alpha)| + \lambda \sigma^{\mrm{S6V}}(\alpha)\beta^{\mrm{S6V}}(\alpha)^{-1}\varepsilon^{1/3}$. 
 Let $h_0^\varepsilon$ be the height function associated to the Bernoulli($\rho_{\varepsilon,\lambda}$) initial condition with $h^\varepsilon_0(0) = 0$. Let $\h_0:\R\to\R$ equal a rate 2 two-sided Brownian motion with drift $\lambda$ started at $(0,0)$. Then for every $x\in\R$, $\h^{\mrm{S6V},\varepsilon}(\h_0^\varepsilon;x, t)$ converges weakly to $\h(\h_0; x, t)$ as $\varepsilon\to0$.

  The same holds for ASEP, for fixed $t>0$, $q\in[0,1)$, $\alpha\in (-1,1)$, $\lambda\in\R$, and $\rho_{\varepsilon,\lambda} := |(\mu^{\mrm{ASEP}})'(\alpha)| + \lambda\sigma^{\mrm{ASEP}}(\alpha)\beta^{\mrm{ASEP}}(\alpha)^{-1}\varepsilon^{1/3}$.
\end{lemma}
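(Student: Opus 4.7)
The plan is to follow the same two-step strategy used in the proof of Lemma~\ref{l.bbp for fixed point}, but applied at the level of the prelimiting ASEP and S6V height functions rather than to the KPZ fixed point. First, I would identify the one-point distributional limit of $\h^{*,\varepsilon}(\h_0^\varepsilon; x, t)$ for $*=\mrm{ASEP}$ or $\mrm{S6V}$ by invoking \cite[Theorem 1.6]{aggarwal2018current}, which provides Baik-Rains type one-point convergence for S6V started from a Bernoulli initial condition at a generic density, including the diffusive density perturbation $\lambda \sigma(\alpha) \beta(\alpha)^{-1} \varepsilon^{1/3}$; the analogous statement for ASEP is obtained either via degeneration of S6V or from earlier references on stationary ASEP (the TASEP case going back to Ferrari-Spohn \cite[Theorem 1.2]{ferrari2006scaling}). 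Second, I would match this prelimiting Baik-Rains limit with the fixed point one-point distribution provided by Lemma~\ref{l.bbp for fixed point}.

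By the standard KPZ scaling symmetries of both the prelimiting rescaled height functions in Definitions~\ref{d.rescaled ASEP height function}--\ref{d.rescaled S6V height function} and of the directed landscape, it suffices to prove the convergence at $t=1$. For $\lambda = 0$ the density $\rho_{\varepsilon,0} = |\mu'(\alpha)|$ is exactly the macroscopic density associated to the characteristic velocity $\alpha$, and the cited one-point results immediately give $\h^{*,\varepsilon}(\h_0^\varepsilon; x, 1) \stackrel{d}{\to} \mrm{BR}_x$, which agrees with Lemma~\ref{l.bbp for fixed point} at $\lambda = 0$. For general $\lambda$, the density perturbation $\lambda \sigma(\alpha) \beta(\alpha)^{-1} \varepsilon^{1/3}$ translates, after applying the rescalings \eqref{e.rescaled h for ASEP} and \eqref{e.rescaled h for S6V} to the corresponding Bernoulli random walk, into the rescaled initial condition $\h_0^\varepsilon$ converging (via Donsker's theorem) to a rate $2$ two-sided Brownian motion with drift $\lambda$. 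Feeding the same density perturbation through \cite[Theorem 1.6]{aggarwal2018current} shifts the Baik-Rains parameter from $x$ to $x + \lambda/2$ and produces a deterministic affine correction $\lambda x + \tfrac{1}{4}\lambda^2$ coming from the nonlinearity of $\mu$ at $\alpha$, so the limit becomes $\mrm{BR}_{x + \lambda/2} + \lambda x + \tfrac{1}{4}\lambda^2$, exactly matching the fixed point one-point distribution obtained in Lemma~\ref{l.bbp for fixed point}.

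The main obstacle is the bookkeeping needed to check that the scaling constants $\mu$, $\mu'$, $\sigma$, and $\beta$ appearing in \cite[Theorem 1.6]{aggarwal2018current} align with those defined in \eqref{e.mu sigma ASEP}, \eqref{e.nu ASEP}, \eqref{e.mu and sigma}, and \eqref{e.S6V spatial scaling}, and that the Baik-Rains parameter shift and affine correction induced by the $\lambda$-perturbation indeed come out to be $\lambda/2$ and $\lambda x + \tfrac{1}{4}\lambda^2$, respectively. An alternative and potentially cleaner route would be to first settle the $\lambda = 0$ case as above, and then deduce the $\lambda \neq 0$ case from the affine shift symmetry of the KPZ fixed point used in the proof of Lemma~\ref{l.bbp for fixed point} together with a corresponding prelimiting identity obtained by tilting the density in the Bernoulli initial condition.
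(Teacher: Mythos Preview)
Your proposal is correct and follows essentially the same approach as the paper: identify the one-point limit of the KPZ fixed point under drifted Brownian initial data via Lemma~\ref{l.bbp for fixed point}, cite \cite{aggarwal2018current} for the prelimiting one-point convergence (the paper uses Theorems~1.4 and 1.6 there for ASEP and S6V, respectively), and reduce general $t>0$ to $t=1$ by scaling. The paper's proof is terser and simply invokes these ingredients without spelling out the $\lambda=0$ versus $\lambda\neq 0$ split or the bookkeeping you describe, but the content is the same.
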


\begin{proof}
The distribution of $\h(\h_0; x, 1)$ is identified in Lemma~\ref{l.bbp for fixed point}, and the convergence to the same of the height function at a fixed point in the cases of ASEP and S6V are respectively provided by \cite[Theorems~1.4 and 1.6]{aggarwal2018current}. The case of general $t>0$ follows by scaling properties of the KPZ fixed point \cite[Theorem 4.5 (i)]{matetski2016kpz}.
\end{proof}

\subsection{Process convergence under Bernoulli initial condition}\label{s.process convergence for bernoulli proof}

Here we give the proof of Proposition~\ref{p.bernoulli convergence}. It will need a simple lemma whose proof we omit.

\begin{lemma}\label{l.X=Y}
Let $X,Y$ be random variables taking values in $\R\cup\{\pm\infty\}$. Suppose $X\geq Y$ almost surely and $\smash{X \stackrel{d}{=} Y}$. Then $X=Y$ almost surely.
\end{lemma}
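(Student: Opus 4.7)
The plan is to show that the event $\{X > Y\}$ has probability zero by reducing it to a countable union of events, each of which has probability zero thanks to the equal-distribution assumption. Since the codomain is $\R \cup \{\pm\infty\}$, I first need to check that the decomposition into rational thresholds still captures all strict inequalities, which is a routine case check (including $X=+\infty$ or $Y=-\infty$).

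The key step is to write
\begin{equation*}
\{X > Y\} = \bigcup_{q \in \Q} \{X > q \geq Y\},
\end{equation*}
which holds since between any two extended reals with $X > Y$ one can insert a rational. Then for each fixed $q \in \Q$, the almost sure inequality $X \geq Y$ gives $\{Y > q\} \subseteq \{X > q\}$, and the distributional identity $X \stackrel{d}{=} Y$ gives $\P(X > q) = \P(Y > q)$. Combining these,
\begin{equation*}
\P(X > q,\, Y \leq q) = \P(X > q) - \P(X > q,\, Y > q) = \P(Y > q) - \P(Y > q) = 0.
\end{equation*}
A union bound over $q \in \Q$ then yields $\P(X > Y) = 0$, and combined with $X \geq Y$ almost surely this gives $X = Y$ almost surely.

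There is no real obstacle here; the only thing to be mildly careful about is the handling of $\pm \infty$ when verifying the rational decomposition, but all cases work since $\Q$ is unbounded above and below in $\R \cup \{\pm\infty\}$.
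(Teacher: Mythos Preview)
Your proof is correct; the rational-threshold decomposition together with the inclusion $\{Y>q\}\subseteq\{X>q\}$ (up to a null set) and the equality of one-dimensional marginals is exactly the standard argument, and your handling of the $\pm\infty$ cases is fine. The paper in fact omits the proof of this lemma entirely, so there is nothing to compare against.
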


In the below, for $\rho\in(0,1)$, a Bernoulli($\rho$) random walk is a random walk whose increment distribution assigns probability $\rho$ to $-1$ and $1-\rho$ to $0$, i.e., the increment is the negative of a Bernoulli($\rho$) random variable.

\begin{proof}[Proof of Proposition~\ref{p.bernoulli convergence}]
We give the proof for S6V and outline the changes required for ASEP. By scaling properties of the KPZ fixed point \cite[Theorem 4.5]{matetski2016kpz}, it suffices to assume $t=1$. Let $[-M,M]$ be the interval on which we will show uniform convergence. 

 We recall from Lemma~\ref{l.prelimiting variation inequality s6v} that, under the basic coupling, there exists some deterministic sequence $\{o_\varepsilon(1)\}_{\varepsilon>0}$ such that $\lim_{\varepsilon\to0} o_\varepsilon(1) = 0$ and the following holds. For any $M>0$, for all $\varepsilon>0$ small enough (depending on $M$), with probability at least  $1-o_\varepsilon(1)$, for all $x\in [-\varepsilon^{-1}, \varepsilon^{-1}]$ and $M>0$,
\begin{align}\label{e.prelimiting inequality}
\h^{\mrm{S6V},\varepsilon}(\h_0^\varepsilon; x, 1) \geq \sup_{y\in [-M,M]} \left( \h_0^{\varepsilon}(y) + \S^{\mrm{S6V},\varepsilon}(y;x)\right) - o_\varepsilon(1).
\end{align}

Now, $\eta_0^\varepsilon$ is stationary for the S6V dynamics for all $\varepsilon>0$ (\cite[Lemma A.2]{aggarwal2020limit}), i.e., the S6V height function at any time $s>0$ (and prior to rescaling) is a Bernoulli$(\rho_{\varepsilon})$ random walk up to a height shift. Thus, by Donsker's theorem, $\h^{\mrm{S6V},\varepsilon}(\h_0^\varepsilon; \bm\cdot, 1) - \h^{\mrm{S6V},\varepsilon}(\h_0^\varepsilon; 0, 1)$ converges weakly in the topology of uniform convergence on compact sets to $B_\lambda:\R\to\R$, a rate 2 two-sided Brownian motion with drift $\lambda$. 

In particular, combining with the one-point convergence from Lemma~\ref{l.one-point convergence for bernoulli}, $\h^{\mrm{S6V},\varepsilon}(\h_0^\varepsilon; \bm\cdot, 1)$ is a tight sequence in the topology of uniform convergence on compact sets. 
  By Theorem~\ref{t.asep airy sheet}, it also holds that $\S^{\mrm{S6V},\varepsilon}$ converges weakly in the same topology to the Airy sheet $\S$, and, again by Donsker's theorem, that $\h_0^\varepsilon$ converges weakly to $B_\lambda$. So $(\h_0^\varepsilon, \h^{\mrm{S6V},\varepsilon}(\h_0^\varepsilon; \bm\cdot, 1), \S^{\mrm{S6V},\varepsilon})$ is a tight sequence, and, by moving to a subsequence of $\varepsilon>0$ if necessary, we may assume that $(\h_0^\varepsilon, \h^{\mrm{S6V},\varepsilon}(\h_0^\varepsilon; \bm\cdot, 1), \S^{\mrm{S6V},\varepsilon})$ converges weakly. So by the Skorohod representation theorem, we may assume that we are working on a probability space and have a continuous process $\g:\R\to\R$ such that, almost surely, as $\varepsilon\to 0$ and uniformly on compact sets,
$$\left(\h_0^{\varepsilon}, \h^{\mrm{S6V},\varepsilon}(\h_0^\varepsilon; \bm\cdot, 1), \S^{\mrm{S6V},\varepsilon}\right) \to (B_\lambda, \g, \S);$$
here the components on the lefthand side are coupled via the basic coupling for each $\varepsilon>0$, and, on the righthand side, $B_\lambda$ and $\S$ are independent while $(B_\lambda,\S)$ and $\g$ have an unspecified coupling. In particular, \eqref{e.prelimiting inequality} holds with probability at least $1-o_\varepsilon(1)$ for each $\varepsilon>0$. By taking $\varepsilon\to0$ and then a supremum over $M$, the latter along with Lemma~\ref{l.argmax control} (to ensure the convergence of \eqref{e.prelimiting inequality} is uniform over $x$ in any given compact set) implies that, for all $x\in\R$,
\begin{align}\label{e.limiting lower bound}
\g(x) \geq \sup_{y\in\R}\left(B_\lambda(y) + \S(y;x)\right).
\end{align}

Now, we know by the S6V case of Lemma~\ref{l.one-point convergence for bernoulli} that $\g(x) \stackrel{d}{=} \sup_{y\in\R} (B_\lambda(y) + \S(y;x))$ for every fixed $x\in\Q$. So by applying Lemma~\ref{l.X=Y} and taking a countable intersection we obtain that in fact \eqref{e.limiting lower bound} holds with equality almost surely for all $x\in\Q$ simultaneously. Thus we have obtained that $\g=\h(B_\lambda; x,1)$ for all $x\in\Q$ simultaneously on a probability $1$ event. Since both are continuous functions, they are equal for all $x$. Thus all weak subsequential limits of $(\h_0^\varepsilon,\h^{\mrm{S6V},\varepsilon}(\h_0^\varepsilon; \bm\cdot, 1))$ are $(B_\lambda,\h(B_\lambda; \bm\cdot, 1))$, so we obtain that $(\h_0^\varepsilon, \h^{\mrm{S6V},\varepsilon}(\h_0^\varepsilon; \bm\cdot, 1)) \to (B_\lambda,\h(B_\lambda; \bm\cdot, 1))$ weakly uniformly on compact sets.

The case of ASEP is the same (in fact simpler due to its satisfying exact height monotonicity), modulo a few small changes which we list now. First, Theorem~\ref{t.s6v airy sheet} is replaced by Theorem~\ref{t.asep airy sheet}. Second, the stationarity of Bernoulli($\rho$) for ASEP is a well-known fact. Finally, \eqref{e.prelimiting inequality} holds without the $-o_\varepsilon(1)$ term and with probability 1 for all $z\in\R$ simultaneously. This completes the proof.
\end{proof}

\section{KPZ fixed point convergence with Bernoullis on either side}\label{s.general one-sided initial condition}

In this section we prove convergence to the KPZ fixed point for ASEP and S6V height functions when started from an initial condition which is given by a random walk on either side if one goes out far enough. It will be used to give the proof of Theorem~\ref{t.main kpz fixed point convergence} in Section~\ref{s.proofs of main theorems}. The precise statement is below, and its proof will be given just after its statement.

Fix parameters for ASEP or S6V as in Assumption~\ref{as.master assumption}. Fix $\lambda>0$ and let $\rho_{\varepsilon,\lambda} := |\mu'(\alpha)| + 2\sigma(\alpha)\beta(\alpha)^{-1}\lambda\varepsilon^{1/3}$. Here and in the remainder of the paper, for a real number $M$, define 
\begin{equation}\label{e.Msc}
M^\mrm{sc} = M\beta(\alpha)^{-1},
\end{equation}
where ``sc'' stands for ``scaled''; thus, by Definitions~\ref{d.rescaled ASEP height function} or \ref{d.rescaled S6V height function}, if a discrete height function $h_0^\varepsilon$ is defined on $\intint{-M\floor{\varepsilon^{-2/3}}, M\floor{\varepsilon^{-2/3}}}$, then $\h_0^\varepsilon$ will  be defined on $[-M^\mrm{sc},M^\mrm{sc}]$.

 Let $M>0$ and let $h_0^\varepsilon:\Z\to\Z$ be a sequence of height functions such that the associated particle configuration $\eta_0^\varepsilon:\Z\to\{0,1\}$ is such that $\eta_0(x)\sim \mrm{Bern}(\rho_{\varepsilon,\lambda})$ \iid for all $|x|> M\floor{\varepsilon^{-2/3}}$. Within $\intint{-M\floor{\varepsilon^{-2/3}},M\floor{\varepsilon^{-2/3}}}$ we allow the configuration to be arbitrary so long as $\h_0^\varepsilon$ converges uniformly on $[-M^\mrm{sc},M^\mrm{sc}]$ to a continuous function. Recall the definition of $\h_0^\varepsilon$ as the rescaled version of $h_0^\varepsilon$ from Definitions~\ref{d.rescaled ASEP height function} or \ref{d.rescaled S6V height function} for ASEP and S6V, respectively.

\begin{proposition}[KPZ fixed point convergence with Bernoullis pasted on sides]\label{p.KPZ FP convergence via approximation bernoullis}
Let $t>0$, $q\in[0,1)$, $z\in(0,1)$, and $\alpha\in(z,z^{-1})$.
Fix $\lambda>0$ and $M>0$. Let $h_0^\varepsilon$ be as above; in particular,  $\h_0^\varepsilon$ converges weakly to a continuous function $\h_0: \R\to\R$, in the topology of uniform convergence on compact sets, where $\h_0$ is deterministic on $[-M^\mrm{sc},M^\mrm{sc}]$.
Then $\h^{\mrm{S6V},\varepsilon}(h^\varepsilon_0; \bm\cdot, t) \stackrel{\smash{d}}{\to} \h(\h_0; \bm\cdot, t)$ in the topology of uniform convergence on compact sets. 
The same holds for the ASEP height function for fixed $t>0$, $q\in[0,1)$, and $\alpha\in(-1,1)$.
\end{proposition}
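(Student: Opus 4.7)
The plan is to transfer Proposition~\ref{p.bernoulli convergence} (KPZ fixed point convergence from stationary initial condition) to the initial condition $h_0^\varepsilon$ via a coupling with a pure Bernoulli initial condition on all of $\Z$, combined with an independence argument. Let $\tilde h_0^\varepsilon$ denote a pure Bernoulli initial condition on $\Z$, with density chosen so that outside $\intint{-M\floor{\varepsilon^{-2/3}},M\floor{\varepsilon^{-2/3}}}$ its increments have the same Bernoulli distribution as those of $h_0^\varepsilon$, normalized so that $\tilde h_0^\varepsilon(0)=h_0^\varepsilon(0)$, and coupled to $h_0^\varepsilon$ so that they share their Bernoulli increments outside the compact interval, while the increments of $\tilde h_0^\varepsilon$ inside this interval are independent of all other randomness (in particular, of the configuration of $h_0^\varepsilon$ inside and of the dynamics randomness). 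Let $B$ denote the Brownian weak limit of $\tilde\h_0^\varepsilon$ supplied by Donsker's theorem. The key structural consequence is that $\h^{*,\varepsilon}(\h_0^\varepsilon;\bm\cdot,t)$ is independent of the restriction of $\tilde h_0^\varepsilon$ to $\intint{-M\floor{\varepsilon^{-2/3}},M\floor{\varepsilon^{-2/3}}}$.

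For each $\delta>0$, define $\msf E_\delta^\varepsilon:=\{\sup_{x\in[-M^\mrm{sc},M^\mrm{sc}]}|\tilde\h_0^\varepsilon(x)-\h_0^\varepsilon(x)|\leq\delta\}$. By Donsker's theorem, $\P(\msf E_\delta^\varepsilon)\to\P(\msf F_\delta)>0$ as $\varepsilon\to 0$, where $\msf F_\delta:=\{\sup_{x\in[-M^\mrm{sc},M^\mrm{sc}]}|B(x)-\h_0(x)|\leq\delta\}$; positivity uses the full support of two-sided Brownian motion around any continuous path. Because the increments of $\tilde h_0^\varepsilon$ and $h_0^\varepsilon$ agree outside the compact interval, the difference $\tilde h_0^\varepsilon - h_0^\varepsilon$ is a single constant on each of the two external rays, each bounded in absolute value by $\delta\sigma(\alpha)\varepsilon^{-1/3}$ on $\msf E_\delta^\varepsilon$; hence $\sup_{x\in\Z}|\tilde h_0^\varepsilon(x) - h_0^\varepsilon(x)|\leq \delta\sigma(\alpha)\varepsilon^{-1/3}$ on this event. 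Height monotonicity (Lemma~\ref{l.asep height monotonicity} for ASEP; the approximate version Lemma~\ref{l.s6v approximate height monotonicity} for S6V, yielding an additional $o_\varepsilon(1)$ error on a high-probability event) then gives the comparison
\begin{align*}
\sup_{x\in[-J,J]}\bigl|\h^{*,\varepsilon}(\tilde\h_0^\varepsilon;x,t)-\h^{*,\varepsilon}(\h_0^\varepsilon;x,t)\bigr|\leq \delta + o_\varepsilon(1)
\end{align*}
on $\msf E_\delta^\varepsilon$ intersected with this high-probability event, for every fixed $J>0$.

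Proposition~\ref{p.bernoulli convergence} applied to $\tilde h_0^\varepsilon$ yields $(\tilde\h_0^\varepsilon,\h^{*,\varepsilon}(\tilde\h_0^\varepsilon;\bm\cdot,t))\xrightarrow{d}(B,\h(B;\bm\cdot,t))$ in the local uniform topology. Since $\P(\msf E_\delta^\varepsilon)\to\P(\msf F_\delta)>0$, the conditional laws also converge: $(\tilde\h_0^\varepsilon,\h^{*,\varepsilon}(\tilde\h_0^\varepsilon;\bm\cdot,t))\mid\msf E_\delta^\varepsilon\xrightarrow{d}(B,\h(B;\bm\cdot,t))\mid\msf F_\delta$. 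By the comparison just derived, $\h^{*,\varepsilon}(\h_0^\varepsilon;\bm\cdot,t)\mid\msf E_\delta^\varepsilon$ is within $\delta+o_\varepsilon(1)$ (uniformly on compact sets) of $\h^{*,\varepsilon}(\tilde\h_0^\varepsilon;\bm\cdot,t)\mid\msf E_\delta^\varepsilon$; and by the independence established in the first paragraph, the unconditional law of $\h^{*,\varepsilon}(\h_0^\varepsilon;\bm\cdot,t)$ coincides with its conditional law given $\msf E_\delta^\varepsilon$. Consequently every weak subsequential limit of $\h^{*,\varepsilon}(\h_0^\varepsilon;\bm\cdot,t)$ lies within $\delta$ of $\h(B;\bm\cdot,t)\mid\msf F_\delta$ in the local uniform topology. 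Finally, send $\delta\to 0$: by the Markov property of $B$, $B\mid\msf F_\delta\xrightarrow{d}\h_0$ (inside $[-M^\mrm{sc},M^\mrm{sc}]$ the conditional process concentrates at $\h_0$, and outside it remains a Brownian motion with the prescribed drift started from the boundary values $\h_0(\pm M^\mrm{sc})$), matching the law of $\h_0$ in the proposition. Continuity of $\h(\bm\cdot;\bm\cdot,t)$ in the initial condition, via the variational formula \eqref{e.fixed point via landscape} combined with the argmax localization of Lemma~\ref{l.argmax control}, then implies $\h(B;\bm\cdot,t)\mid\msf F_\delta\xrightarrow{d}\h(\h_0;\bm\cdot,t)$. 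Combining, $\h^{*,\varepsilon}(\h_0^\varepsilon;\bm\cdot,t)\xrightarrow{d}\h(\h_0;\bm\cdot,t)$.

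The main obstacles will be (i) rigorously justifying the conditional weak convergence, using positivity of $\P(\msf F_\delta)$ together with Portmanteau (noting that $\msf F_\delta$ is open in the uniform topology), and (ii) establishing the continuity of the KPZ fixed point as a functional of the initial condition on a high-probability event, for which Lemma~\ref{l.argmax control} (and Corollary~\ref{c.agreement of fixed points}) is the key input; for S6V, one must additionally track the $o_\varepsilon(1)$ error arising from the approximate nature of height monotonicity in Lemma~\ref{l.s6v approximate height monotonicity}.
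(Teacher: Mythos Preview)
Your proof is correct and follows essentially the same approach as the paper: couple $h_0^\varepsilon$ with a pure Bernoulli initial condition $\tilde h_0^\varepsilon$ (the paper's $g_0^\varepsilon$) agreeing outside the compact interval, condition on the event that $\tilde\h_0^\varepsilon$ is $\delta$-close to the deterministic part of $\h_0$, use height monotonicity to transfer the evolution, and exploit the independence of this conditioning event from $\h^{*,\varepsilon}(\h_0^\varepsilon;\bm\cdot,t)$ to remove the conditioning. The paper's treatment of the $\delta\to 0$ step is slightly more direct than yours---rather than arguing via $B\mid\msf F_\delta\xrightarrow{d}\h_0$ and then invoking continuity of the fixed point map, it simply observes that on $\msf F_\delta$ one already has $\sup_\R|B-\h_0|\leq\delta$ (since the increments agree outside $[-M^\mrm{sc},M^\mrm{sc}]$), whence $\sup_\R|\h(B;\bm\cdot,t)-\h(\h_0;\bm\cdot,t)|\leq\delta$ follows immediately from monotonicity of the variational formula \eqref{e.fixed point via landscape}.
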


As mentioned in Section~\ref{s.proof ideas}, the idea of the proof of Proposition~\ref{p.KPZ FP convergence via approximation bernoullis} is to approximate the initial condition inside $\intint{-M\floor{\varepsilon^{-2/3}}, M\floor{\varepsilon^{-2/3}}}$ by the stationary configuration i.e., the Bernoulli product measure on the particle configuration, and then make use of Proposition~\ref{p.bernoulli convergence}. This approximation comes down to approximating the limiting height function by a Brownian motion on a compact interval, which is why we assume that $\h_0$ is continuous.

\begin{proof}[Proof of Proposition~\ref{p.KPZ FP convergence via approximation bernoullis}]
We give the proof for S6V as it is analogous for ASEP. Fix any $J>0$ and let $G: \mc C([-J,J],\R)\to\R$ be any bounded uniformly continuous function. It suffices to prove that, as $\varepsilon\to0$,
\begin{align}\label{e.to establish for stationary + approximation}
\E\Bigl[G(\h^{\mrm{S6V},\varepsilon}(\h_0^\varepsilon;\bm\cdot, 1))\Bigr] \to \E\Bigl[G(\h(\h_0;\bm\cdot, 1))\Bigr].
\end{align}

Let $\xi_0^\varepsilon$ be the initial particle configuration given by \iid Bern($\rho_{\varepsilon}$) coupled with $\eta_0^\varepsilon$ such that $\xi_0^\varepsilon(x) = \eta_0^\varepsilon(x)$ for all $|x|>M\floor{\varepsilon^{-2/3}}$. Let $g_0^\varepsilon$ be the height function satisfying $g_0^\varepsilon(0) = h_0^\varepsilon(0)$ and associated to $\xi_0^\varepsilon$ and $\g_0^\varepsilon$ the rescaled version of $g_0^\varepsilon$ from Definition~\ref{d.rescaled S6V height function}. Then, it follows from Proposition~\ref{p.bernoulli convergence}, the assumptions on $\h_0^\varepsilon$, and the coupling of $\h_0^\varepsilon$ and $\g_0^\varepsilon$ that 
\begin{equation}\label{e.joint convergence with h_0}
(\h_0^\varepsilon, \g_0^\varepsilon, \h^{\mrm{S6V},\varepsilon}(\g_0^\varepsilon;\bm\cdot,1))\stackrel{\smash{d}}{\to} (\h_0, B_\lambda, \h(B_\lambda;\bm\cdot, 1))
\end{equation}
in the topology of uniform convergence on compact sets, where $B_\lambda$ is Brownian motion with drift $\lambda$, and $\h_0$ and $B_\lambda$ differ by $\h_0(M^\mrm{sc}) - B_\lambda(M^\mrm{sc})$ on all of $[M^\mrm{sc},\infty)$ and by $\h_0(-M^\mrm{sc}) - B_\lambda(-M^\mrm{sc})$ on all of $(-\infty, -M^\mrm{sc}]$.

Note that $\h_0$ on $[-M^\mrm{sc},M^\mrm{sc}]$ is deterministic. For $\delta>0$, consider the function $F_\delta: \mc C([-M^\mrm{sc},M^\mrm{sc}],\R)\to\R$ given by $f\mapsto \one_{\sup_{x\in[-M^\mrm{sc},M^\mrm{sc}]}|f(x)-\h_0(x)| \leq \delta}$ and observe that the set of discontinuities of $F_\delta$ has probability 0 under the law of $B_\lambda$.
Since  $F_\delta$ is almost surely continuous with respect to the law of $B_\lambda$, it follows that $(f,\tilde f)\mapsto F_\delta(f)G(\tilde f)$ is almost surely continuous with respect to the law of $(B_\lambda, \h(B_\lambda;\bm\cdot, 1))$.  So, the continuous mapping theorem \cite[Theorem 5.27]{Kallenberg} (along with the fact that $(f,\tilde f)\mapsto F_\delta(f)G(\tilde f)$ is bounded) yields that, for all $\delta>0$, as $\varepsilon\to 0$,
\begin{align*}
\E\left[\one_{\sup_{[-M^\mrm{sc},M^\mrm{sc}]}|\g_0^\varepsilon - \h_0|\leq \delta}G(\h^{\mrm{S6V},\varepsilon}(\g_0^\varepsilon;\bm\cdot, 1))\right] \to \E\left[\one_{\sup_{[-M^\mrm{sc},M^\mrm{sc}]}|B_\lambda - \h_0|\leq \delta}G(\h(B_\lambda;\bm\cdot, 1))\right].
\end{align*}

Now, by the Portmanteau theorem and \eqref{e.joint convergence with h_0}, it holds that $\P(\sup_{[-M^\mrm{sc},M^\mrm{sc}]}|\g_0^\varepsilon - \h_0|\leq \delta) \to \P(\sup_{[-M^\mrm{sc},M^\mrm{sc}]}|B_\lambda - \h_0|\leq \delta)$ as $\varepsilon\to0$ for all $\delta>0$, and the latter probability is positive (as the law of Brownian motion assigns positive probability to all open sets). So the previous display may be rewritten, for any $\delta>0$, as
\begin{equation}\label{e.G limit}
\begin{split}
\MoveEqLeft[10]
\lim_{\varepsilon\to 0}\biggl|\E\Bigl[G(\h^{\mrm{S6V},\varepsilon}(\g_0^\varepsilon;\bm\cdot, 1))\midd \sup_{[-M^\mrm{sc},M^\mrm{sc}]}|\g_0^\varepsilon - \h_0|\leq \delta\Bigr]\\
& - \E\Bigl[G(\h(B_\lambda;\bm\cdot, 1))\midd \sup_{[-M^\mrm{sc},M^\mrm{sc}]}|B_\lambda - \h_0|\leq \delta\Bigr]\biggr| = 0,
\end{split}
\end{equation}
where $\E[X\mid \msf A] := \E[X\one_{\msf A}]/\P(\msf A)$ for a random variable $X$ and positive probability event $\msf A$.

To obtain \eqref{e.to establish for stationary + approximation}, we will first argue that, on the events conditioned on in the expectations in \eqref{e.G limit}, we can replace $\g_0^\varepsilon$ by $\h_0^\varepsilon$ and $B_\lambda$ by $\h_0$ in the arguments of $\h^{\mrm{S6V},\varepsilon}$ and $\h$ on the lefthand and righthand sides, respectively, up to a small error. 

Recall from \eqref{e.joint convergence with h_0} that $\h_0$ and $B_\lambda$ are coupled to differ by $\h_0(M^\mrm{sc}) - B_\lambda(M^\mrm{sc})$ on $[M^\mrm{sc},\infty)$ and $\h_0(-M^\mrm{sc}) - B_\lambda(-M^\mrm{sc})$ on $[-M^\mrm{sc},\infty)$. So when $\sup_{[-M^\mrm{sc},M^\mrm{sc}]}|B_\lambda-\h_0|\leq \delta$, it follows that $\sup_{\R}|B_\lambda-\h_0|\leq \delta$. Then monotonicity of $\f\mapsto \h(\f;\bm\cdot, 1)$ when coupled via the landscape (as in \eqref{e.fixed point via landscape}, from which the monotonicity is immediate) yields that, when $\sup_{[-M,M]}|B_\lambda-\h_0|\leq \delta$,
\begin{align}\label{e.B comaprison}
\sup_{\R}\bigl|\h(B_\lambda;\bm\cdot, 1) - \h(\h_0;\bm\cdot, 1)\bigr|\leq \delta.
\end{align}

We also observe that, since $\h_0^\varepsilon\to\h_0$ uniformly on $[-M^\mrm{sc},M^\mrm{sc}]$, for all small enough $\varepsilon$ it holds that $\sup_{[-M^\mrm{sc},M^\mrm{sc}]}|\h_0-\h_0^\varepsilon|\leq \delta$. Thus when $\sup_{[-M,M]}|\g_0^\varepsilon - \h_0|\leq \delta$, for small enough $\varepsilon$ it holds that $\sup_{[-M^\mrm{sc},M^\mrm{sc}]}|\g_0^\varepsilon - \h_0^\varepsilon|\leq 2\delta$; since the increments of $\g_0^\varepsilon$ and $\h_0^\varepsilon$ are the same outside $[-M^\mrm{sc},M^\mrm{sc}]$, we obtain similarly to the case with $B_\lambda$ that
\begin{align*}
 \sup_{\R}\bigl|\g_0^\varepsilon - \h_0^\varepsilon\bigr| \leq 2\delta.
 \end{align*}
So, by approximate height monotonicity (Lemma~\ref{l.s6v approximate height monotonicity}), when $\sup_{[-M^\mrm{sc},M^\mrm{sc}]}|\g_0^\varepsilon-\h_0|\leq \delta$ and the two evolutions are coupled via the basic coupling, for small enough $\varepsilon$,
\begin{align}\label{e.height function comparison}
\sup_{x\in [-J,J]}\Bigl|\h^{\mrm{S6V},\varepsilon}(\g_0^\varepsilon;\bm\cdot, 1) - \h^{\mrm{S6V},\varepsilon}(\h_0^\varepsilon;\bm\cdot, 1)\Bigr| \leq 2\delta + o_\varepsilon(1)
\end{align}
with probability at least $1-o_\varepsilon(1)$. 

Let $\rho>0$. By the uniform continuity of $G$, there exists $\delta = \delta(\rho)>0$ such that, if $\sup_{[-J,J]}|f-\tilde f|\leq 3\delta$, then $|G(f)-G(\tilde f)|\leq \rho$. Returning to \eqref{e.G limit}  and using \eqref{e.B comaprison} and \eqref{e.height function comparison} with this value of $\delta$ yields that
\begin{align*}
\MoveEqLeft[10]
\limsup_{\varepsilon\to 0}\biggl|\E\Bigl[G(\h^{\mrm{S6V},\varepsilon}(\h_0^\varepsilon;\bm\cdot, 1))\midd \sup_{[-M,M]}|\g_0^\varepsilon - \h_0|\leq \delta\Bigr]\\
& - \E\Bigl[G(\h(\h_0;\bm\cdot, 1))\midd \sup_{[-M^\mrm{sc},M^\mrm{sc}]}|B_\lambda - \h_0|\leq \delta\Bigr]\biggr| \leq 2\rho.
\end{align*}
Next we observe that $\g_0^\varepsilon$ and $\h^{\mrm{S6V},\varepsilon}(\h_0^\varepsilon;\bm\cdot, 1)$ are independent, and $B_\lambda$ and $\h(\h_0;\bm\cdot, 1)$ are independent. So the previous display becomes
\begin{align*}
\MoveEqLeft[10]
\limsup_{\varepsilon\to 0}\biggl|\E\Bigl[G(\h^{\mrm{S6V},\varepsilon}(\h_0^\varepsilon;\bm\cdot, 1))\Bigr] - \E\Bigl[G(\h(\h_0;\bm\cdot, 1))\Bigr]\biggr| \leq 2\rho.
\end{align*}
Since $\rho>0$ was arbitrary, we obtain \eqref{e.to establish for stationary + approximation} and complete the proof.
\end{proof}

\section{KPZ fixed point convergence bounded by arbitrary drift} \label{s.proofs of main theorems}

In this section, we will give the proof of Theorem~\ref{t.main kpz fixed point convergence}, but we will first prove an intermediate statement. Recall that Proposition~\ref{p.KPZ FP convergence via approximation bernoullis} yields convergence to the KPZ fixed point for initial conditions whose particle configurations are \iid Bernoulli $\rho_{\varepsilon,\lambda} = |\mu'(\alpha)| + \lambda\sigma(\alpha)\beta(\alpha)^{-1} \varepsilon^{1/3}$ far enough to the left and right. The intermediate statement, Proposition~\ref{p.KPZ FP convergence for V shape bernoulli}, yields convergence to the KPZ fixed point when the initial condition's particle configuration is $\rho_{\varepsilon,-\lambda}$ far enough to the right and $\rho_{\varepsilon,\lambda}$ far enough to the left. This will be used to upper bound the height function from general initial condition satisfying the growth condition in Assumption~\ref{as.master assumption} so as to prove Theorem~\ref{t.main kpz fixed point convergence} in Section~\ref{s.general proof}. 

In the next subsection we give some notation and state and prove Proposition~\ref{p.KPZ FP convergence for V shape bernoulli}.

\subsection{KPZ fixed point convergence with V-drifted Bernoullis pasted}
Fix parameters for S6V or ASEP as in Assumption~\ref{as.master assumption}; in particular, $\alpha$ is fixed and lies in the rarefaction fan for the selected parameters. For $\varepsilon>0$ and $\lambda\in\R$, let $\rho_{\varepsilon,\lambda} = |\mu'(\alpha)| + \lambda\sigma(\alpha)\beta(\alpha)^{-1}\varepsilon^{1/3}$ be as above. Let $\xi^\varepsilon_{+}, \xi^\varepsilon_{-}:\Z\to\{0,1\}$ be initial configurations defined by $\xi^\varepsilon_{\pm}(x) \sim \mrm{Bern}(\rho_{\varepsilon, \pm \lambda})$ independently for each $x\in\Z$.  Next we describe the initial conditions for ASEP and S6V; they are slightly different due to a discrepancy in sign in the definitions \eqref{e.rescaled h for ASEP} and \eqref{e.rescaled h for S6V} of the rescaled height functions, and we start with S6V.

For S6V, let $h_0^\varepsilon:\Z\to\Z$ be an initial height function and $\eta_0^\varepsilon:\Z\to\{0,1\}$ its associated particle configuration. For $M>0$, define the modified particle configuration $\eta_0^{M,\varepsilon}:\Z\to\{0,1\}$ by

\begin{align}\label{e.dominating particle configuration for V}
\eta^{M,\varepsilon}_0(x) = \begin{cases}
\eta_0^\varepsilon(x) & \text{if } |x| \leq M\floor{\varepsilon^{-2/3}},\\
\xi^\varepsilon_{-}(x) & \text{if } x > M\floor{\varepsilon^{-2/3}},\text{ and} \\
\xi^\varepsilon_{+}(x) & \text{if } x < -M\floor{\varepsilon^{-2/3}},
\end{cases}
\end{align}
 and let $h_0^{M,\varepsilon}$ be the associated height function satisfying $h_0^{M,\varepsilon}(0) = h_0^\varepsilon(0)$. 
Suppose $\h_0^\varepsilon$ converges uniformly on compact sets to a continuous function $\h_0:\R\to\R$. Then we observe from Definition~\ref{d.rescaled S6V height function} (and recalling the definition of $M^\mrm{sc}$ from \eqref{e.Msc}) that $\h_0^{M,\varepsilon}$ converges in distribution, in the topology of uniform convergence on compact sets, to $\h_0^{M} : \R\to\R$ given by (see left panel of Figure~\ref{f.dominating height functions})
\begin{align}\label{e.rescaled h_0^M definition}
\h_0^{M}(x) := \begin{cases}
\h_0(x)  & \text{if } |x|\leq M^\mrm{sc},\\
B(x-M^\mrm{sc}) + \lambda (x-M^\mrm{sc}) + \h_0(M^\mrm{sc}) & \text{if } x>M^\mrm{sc}, \text{ and}\\
B(-x+M^\mrm{sc}) - \lambda (x+M^\mrm{sc}) + \h_0(-M^\mrm{sc}) & \text{if } x< -M^\mrm{sc},
\end{cases}
\end{align}
where $B:\R\to\R$ is a rate 2 two-sided Brownian motion.
 Note that we omit $\lambda$ from the notation $h_0^{M,\varepsilon}$, $\h_0^{M,\varepsilon}$, and $\h_0^M$, though it is present in their definitions.

 For ASEP, we define $\eta_0^{M,\varepsilon}:\Z\to\{0,1\}$ by
 \begin{align}\label{e.dominating particle configuration for V ASEP}
 \eta^{M,\varepsilon}_0(x) = \begin{cases}
\eta_0^\varepsilon(x) & \text{if } |x| \leq M\floor{\varepsilon^{-2/3}},\\
\xi^\varepsilon_{+}(x) & \text{if } x > M\floor{\varepsilon^{-2/3}},\text{ and} \\
\xi^\varepsilon_{-}(x) & \text{if } x < -M\floor{\varepsilon^{-2/3}},
\end{cases}
 \end{align}
 i.e., we switch the sides on which $\xi^\varepsilon_{+}$ and $\xi^\varepsilon_-$ appear, and let $h_0^{M,\varepsilon}$ be the associated height function with $h_0^{M,\varepsilon}(0) = h_0^\varepsilon(0)$. Note that due to the extra negative sign in the definition \eqref{e.rescaled h for ASEP} of $\h_0^{M,\varepsilon}$ for ASEP in comparison to that of \eqref{e.rescaled h for S6V}, it still holds that $\h_0^{M,\varepsilon}$ converges in distribution to $\h_0^M$ as in \eqref{e.rescaled h_0^M definition} as $\varepsilon\to 0$ in the topology of uniform convergence on compact sets.

\begin{proposition}\label{p.KPZ FP convergence for V shape bernoulli}
Let $t>0$, $q\in[0,1)$, $z\in(0,1)$, and $\alpha\in(z,z^{-1})$.
Fix $\lambda>0$. Assume $h_0^\varepsilon:\Z\to\Z$ is a sequence of height functions such that $\h_0^\varepsilon$ converges uniformly on compact sets to a continuous function $\h_0: \R\to\R$. Let $\smash{h_0^{M,\varepsilon}}$ be as above.
%
Then there exists a sequence $M_\varepsilon\to\infty$ such that $\h^{\mrm{S6V},\varepsilon}(\h^{M_\varepsilon,\varepsilon}_0; \bm\cdot, t) \stackrel{\smash{d}}{\to} \h(\h_0; \bm\cdot, t)$  in the topology of uniform convergence on compact sets as $\varepsilon\to0$. 
The same holds for the ASEP height function for fixed $t>0$, $q\in[0,1)$, and $\alpha\in(-1,1)$.
\end{proposition}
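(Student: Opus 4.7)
My plan is to reduce to Proposition~\ref{p.KPZ FP convergence via approximation bernoullis} by comparing $h_0^{M,\varepsilon}$ to a reference initial condition $g_0^{M,\varepsilon}$ whose particle configuration has a \emph{single} Bernoulli density on both sides (as required by that proposition). Concretely, for S6V, define $g_0^{M,\varepsilon}$ to match $h_0^{M,\varepsilon}$ on $\{x \leq M\floor{\varepsilon^{-2/3}}\}$ but to be i.i.d.\ Bernoulli$(\rho_{\varepsilon,+\lambda})$ on $\{x > M\floor{\varepsilon^{-2/3}}\}$, so both tails have the higher density $\rho_{\varepsilon,+\lambda}$ (matching the left tail of $h_0^{M,\varepsilon}$); the ASEP case is analogous, with the two halves swapped to reflect the sign flip in Definition~\ref{d.rescaled ASEP height function}. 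Since $\rho_{\varepsilon,-\lambda}<\rho_{\varepsilon,+\lambda}$, couple the configurations so that on the right half the particles of $h_0^{M,\varepsilon}$ embed as a subset of those of $g_0^{M,\varepsilon}$; the extra particles of $g_0^{M,\varepsilon}$, of density $2\lambda\sigma\beta^{-1}\varepsilon^{1/3}$, are then treated as lower-color particles in the colored version of the model, so that projecting them out recovers the evolution from $h_0^{M,\varepsilon}$ while keeping all colors recovers the evolution from $g_0^{M,\varepsilon}$.

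Under this colored coupling, the gap $h^{\mrm{S6V}}(g_0^{M,\varepsilon};y,\floor{\varepsilon^{-1}t}) - h^{\mrm{S6V}}(h_0^{M,\varepsilon};y,\floor{\varepsilon^{-1}t})$ equals the number of lower-color particles strictly to the right of $y$ at time $\floor{\varepsilon^{-1}t}$. Thus it suffices to show, with probability $\to 1$ in $\varepsilon$, that every lower-color particle lies to the right of the entire observation window $[\alpha\varepsilon^{-1}t - J\beta\varepsilon^{-2/3}, \alpha\varepsilon^{-1}t + J\beta\varepsilon^{-2/3}]$ at that time. Each lower-color particle begins at position $\geq M\floor{\varepsilon^{-2/3}}$; since the second-class characteristic speed at background density $|\mu'(\alpha)|$ is $\alpha$, the leftmost lower-color particle's position at time $\floor{\varepsilon^{-1}t}$ concentrates around $M\varepsilon^{-2/3} + \alpha\varepsilon^{-1}t$ with fluctuations of strictly sub-$\varepsilon^{-2/3}$ order, and hence for $M > J\beta$ the required event holds with probability $\to 1$.

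The \emph{main obstacle} is to make this displacement bound rigorous in the S6V setting. For ASEP the coupling of \cite[Lemma~B.1]{quastel2022convergence} provides the needed estimate directly. For S6V no such strong coupling exists; following the approach sketched in Section~\ref{s.proof ideas}, I would instead invoke the more general coupling of \cite[Corollary~3.2]{drillick2024stochastic}, carefully accounting for the (rare) events in which a lower-color particle overtakes a higher-color one, and combining with a concentration estimate for the second-class particle displacement on scales strictly smaller than $\varepsilon^{-2/3}$.

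Given the discrepancy bound, the proposition is assembled by a diagonal/Slutsky argument. For each fixed $M_0 > 0$, Proposition~\ref{p.KPZ FP convergence via approximation bernoullis} applied to $g_0^{M_0,\varepsilon}$ gives $\h^{\mrm{S6V},\varepsilon}(g_0^{M_0,\varepsilon};\bm\cdot,t) \stackrel{d}{\to} \h(\tilde\h_0^{M_0};\bm\cdot,t)$, where $\tilde\h_0^{M_0}$ equals $\h_0$ on $[-M_0^{\mrm{sc}}, M_0^{\mrm{sc}}]$ and extends by a drifted two-sided Brownian motion on each tail, satisfying the growth bound \eqref{e.initial condition assumption} uniformly in $M_0$. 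By Corollary~\ref{c.agreement of fixed points} (together with Lemma~\ref{l.argmax control}), $\h(\tilde\h_0^{M_0};\bm\cdot,t) \stackrel{d}{\to} \h(\h_0;\bm\cdot,t)$ uniformly on $[-J,J]$ as $M_0 \to \infty$. Choosing $M_\varepsilon \to \infty$ slowly enough that both (i)~the discrepancy event has probability $\to 1$ and (ii)~the distributional convergence above holds for every fixed $M_0 \leq M_\varepsilon$ then completes the proof.
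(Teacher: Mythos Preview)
Your overall strategy---compare $h_0^{M,\varepsilon}$ to a reference with a single Bernoulli density on both tails and control the discrepancy through a colored coupling---is the same as the paper's. However, the execution of the ``main obstacle'' has a genuine gap.

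First, your orientation and your gap formula are inconsistent. With discrepancies on the \emph{right} (your $g_0^{M,\varepsilon}\geq h_0^{M,\varepsilon}$), the unrescaled height gap at a point $y$ in the observation window equals $-\#\{\text{lower-color particles at positions}\leq y\}$, not the number to the right of $y$; and since the window sits near $\alpha t\varepsilon^{-1}\gg M\varepsilon^{-2/3}$, there are $\Theta(\varepsilon^{-2/3})$ lower-color particles initially to its left. So what you actually need is that essentially all of them have crossed the window by time $\lfloor\varepsilon^{-1}t\rfloor$. The paper instead puts the reference at the \emph{lower} density on both tails, so the discrepancy particles start to the \emph{left} of the origin and the gap really is the number of lower-color particles to the right of $y$---which one then wants to be zero. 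This orientation also aligns with the hypothesis of Lemma~\ref{l.overtaking} (the single lower-color particle lies to the left of the finitely many barrier particles); in your orientation the lemma does not apply without an extra reflection/particle--hole argument you do not supply.

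Second, and more seriously, the assertion that the leftmost lower-color particle has ``fluctuations of strictly sub-$\varepsilon^{-2/3}$ order'' is false: second-class particle displacement fluctuations over time $\varepsilon^{-1}$ are $\Theta(\varepsilon^{-2/3})$, the same scale as the gap $M\varepsilon^{-2/3}$ you are trying to beat. No black-box concentration estimate at a strictly smaller scale is available (for ASEP or S6V), so this step cannot be completed as written. The paper avoids any direct displacement estimate. Its key idea---absent from your sketch---is a bootstrap: introduce a thin ``barrier'' color (an $\varepsilon^{1/3}$-fraction of the higher-color particles in the interval $\llbracket -M\lfloor\varepsilon^{-2/3}\rfloor,-\tfrac12 M\lfloor\varepsilon^{-2/3}\rfloor\rrbracket$), apply Proposition~\ref{p.KPZ FP convergence via approximation bernoullis} to the configurations with and without the barrier to deduce that only $o(\varepsilon^{-1/3})$ barrier particles reach the window, and then use Lemma~\ref{l.overtaking} to conclude each discrepancy particle overtakes at most $\mathrm{Geo}(q)$ barrier particles and hence stays behind the window. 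This use of the already-proved single-density convergence to locate the barrier is what makes the argument go through, and is the missing ingredient in your proposal.
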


We will prove Proposition~\ref{p.KPZ FP convergence for V shape bernoulli} by comparison to the case where Bernoullis of the \emph{same} parameter are pasted on both sides as in Proposition~\ref{p.KPZ FP convergence via approximation bernoullis}. The joint coupling of such an initial condition with one as in Proposition~\ref{p.KPZ FP convergence for V shape bernoulli} can be described in terms of colored particles in a colored system (as described in Section~\ref{s.colored models} and Remark~\ref{r.colroed to basic coupling}), and for the comparison  we will need to have control over the movement of particles of certain colors in a colored system. 

 The following result, Lemma~\ref{l.overtaking}, from \cite{drillick2024stochastic} will suffice for the purpose of controlling the movement of particles of lower color (much weaker versions would also suffice). In Lemma~\ref{l.overtaking} and the proof of Proposition~\ref{p.KPZ FP convergence for V shape bernoulli}, we will refer to particles to the left or right of given locations; in both the cases of ASEP and S6V, left and right of a location $y$ refers to particles in a particle configuration at sites $x$ which are smaller or larger than $y$, respectively. In particular, it does \emph{not} refer to the relative locations of arrow trajectories in the S6V system on $\Z_{\geq 1}\times \Z$ as in Figure~\ref{f.colored S6V}.

\begin{lemma}[{\cite[Corollary 3.2]{drillick2024stochastic}}]\label{l.overtaking}
Let $(\eta_t)_t$ be a colored stochastic six-vertex process or a colored ASEP process with the following initial conditions:
\begin{enumerate}
  \item There are some (finitely or infinitely many) particles of color 3 or greater.
  \item There are finitely many color 2 particles.
  \item There is a single color 1 particle, to the left of all color 2 particles.
\end{enumerate}
Let $L_t$ be the number of color 2 particles to the left of the color 1 particle at time $t$. Then conditioned on the paths of the color 3 particles, and the combined paths of the color 1 and 2 particles (i.e., without distinguishing the color), for any $t \geq 0$, $L_t$ is stochastically dominated by a random variable $X\sim\mrm{Geo}(q)$, i.e., $X$ satisfies $\P(X\geq k) = q^{k}$. 
\end{lemma}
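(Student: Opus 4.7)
The plan is to compare the V-shaped initial configuration $h_0^{M,\varepsilon}$ with an auxiliary configuration $\tilde h_0^{M,\varepsilon}$ obtained by using the same Bernoulli density on both sides of $[-M\lfloor\varepsilon^{-2/3}\rfloor,M\lfloor\varepsilon^{-2/3}\rfloor]$, so that Proposition~\ref{p.KPZ FP convergence via approximation bernoullis} applies directly. For S6V we take $\tilde h_0^{M,\varepsilon}$ to carry $\xi_-^\varepsilon$ on both sides (replacing the $\xi_+^\varepsilon$ on the left), while for ASEP we take it to carry $\xi_+^\varepsilon$ on both sides (replacing the $\xi_-^\varepsilon$ on the left). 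Coupling $\xi_-^\varepsilon\le\xi_+^\varepsilon$ pointwise, the two configurations differ only at positions $x<-M\lfloor\varepsilon^{-2/3}\rfloor$, at density $\rho_{\varepsilon,\lambda}-\rho_{\varepsilon,-\lambda}=O(\varepsilon^{1/3})$. By Proposition~\ref{p.KPZ FP convergence via approximation bernoullis}, $\h^{*,\varepsilon}(\tilde h_0^{M,\varepsilon};\bm\cdot,t)\xrightarrow{d}\h(\tilde\h_0^M;\bm\cdot,t)$, where $\tilde\h_0^M$ equals $\h_0$ on $[-M^\mathrm{sc},M^\mathrm{sc}]$ and is a two-sided Brownian motion with uniform drift outside, and Corollary~\ref{c.agreement of fixed points} yields $\h(\tilde\h_0^M;x,t)=\h(\h_0;x,t)$ for $|x|\le J$ with probability $1-e^{-cM^{3/2}}$.

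It therefore suffices to show that
\[
\sup_{|x|\le J}\bigl|\h^{*,\varepsilon}(h_0^{M,\varepsilon};x,t)-\h^{*,\varepsilon}(\tilde h_0^{M,\varepsilon};x,t)\bigr|\xrightarrow{\varepsilon\to 0}0
\]
in probability, for $M=M_\varepsilon$ to be chosen by a diagonal argument. I encode the joint evolution via a colored process in which the \emph{common} particles (present in both configurations) are given higher color and the \emph{discrepancy} particles (present only in the larger of the two configurations) are given lower color; by construction these lower-color particles all start at positions $<-M\lfloor\varepsilon^{-2/3}\rfloor$. The standard compatibility of color projections with the basic coupling then shows that the rescaled height-function difference at the observation point $y^*=\alpha t\varepsilon^{-1}+\beta(\alpha) x\varepsilon^{-2/3}$ at time $\lfloor\varepsilon^{-1}t\rfloor$ equals, up to the $o(1)$ error coming from Lemma~\ref{l.s6v approximate height monotonicity}, $\sigma(\alpha)^{-1}\varepsilon^{1/3}$ times the number of lower-color particles located at positions $>y^*$ at that time.

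The key step is to show that with probability $1-o(1)$ no lower-color particle crosses $y^*$. Applied to any fixed lower-color particle $p$ (with the remaining lower-color particles relabelled as higher-priority color $\ge 3$ and a large but finite window of common particles playing the role of ``color~$2$''), Lemma~\ref{l.overtaking} dominates by $\mathrm{Geo}(q)$ the number of common particles ending up to the left of $p$ at time $\lfloor\varepsilon^{-1}t\rfloor$. Since the common particles have macroscopic density $\approx|\mu'(\alpha)|$ throughout $[-M\lfloor\varepsilon^{-2/3}\rfloor,y^*]$---a concentration input available from the corresponding uncolored Bernoulli-initial-condition process---this $\mathrm{Geo}(q)$ overtake bound converts into an $O(1)$ bound on the excess displacement of $p$ beyond the bulk drift $\alpha t\varepsilon^{-1}$, so that $p$ ends at position at most $-M\lfloor\varepsilon^{-2/3}\rfloor+\alpha t\varepsilon^{-1}+O(1)<y^*$ as soon as $M>\beta(\alpha)J$ and $\varepsilon$ is small. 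A finite speed-of-discrepancy argument restricts attention to at most $O(\varepsilon^{-2/3})$ candidate lower-color particles in a window of size $O(t\varepsilon^{-1})$, against a per-particle failure probability bounded by $q^{cM\varepsilon^{-2/3}}$, which is super-polynomially small and easily absorbs the union bound; combining this with the previous paragraph then yields the desired $M_\varepsilon\to\infty$.

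The main obstacle lies in this last step. Lemma~\ref{l.overtaking} is stated for a \emph{single} color~$1$ particle with \emph{finitely many} color~$2$ particles, whereas our setting has many lower-color particles and infinitely many common particles. Reducing to the lemma's hypothesis therefore requires a careful truncation of the common population (justified by finite speed of propagation of the color discrepancy), while the conversion of the $\mathrm{Geo}(q)$ overtake bound into a displacement bound relies on a macroscopic density-concentration input. The ASEP case proceeds along the same lines, with the roles of $\xi_+^\varepsilon$ and $\xi_-^\varepsilon$ interchanged and with simplifications stemming from the availability of exact height monotonicity (Lemma~\ref{l.asep height monotonicity}) in place of its approximate S6V counterpart.
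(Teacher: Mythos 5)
Your proposal does not address the statement at hand. The statement to be proved is Lemma~\ref{l.overtaking} itself: the assertion that, conditionally on the paths of the color~3 particles and the combined (unlabelled) paths of the color~1 and~2 particles, the number $L_t$ of color~2 particles strictly to the left of the single color~1 particle is stochastically dominated by a $\mrm{Geo}(q)$ random variable. What you have written instead is a sketch of the proof of Proposition~\ref{p.KPZ FP convergence for V shape bernoulli} (the convergence of the V-drifted pasted initial condition to the KPZ fixed point), and within that sketch you \emph{invoke} Lemma~\ref{l.overtaking} as an ingredient. Read as an argument for the lemma, this is circular; read as an argument for the proposition, it is answering a different question than the one posed.

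For the record, in the paper this lemma is not proved from scratch: the S6V case is quoted verbatim from \cite[Corollary 3.2]{drillick2024stochastic}, and the ASEP case is obtained by observing that the geometric bound carries no dependence on the scale of the system, so one may pass to the ASEP limit of the stochastic six-vertex model as in \cite{aggarwal2017convergence}. A self-contained proof would have to reproduce the coupling argument of \cite{drillick2024stochastic}, which tracks, step by step, the probability that the color~1 particle is overtaken by (or overtakes) color~2 particles given the coarser conditioning, and shows that each additional overtake costs a factor of $q$; nothing in your proposal engages with that mechanism. Separately, even as a sketch of Proposition~\ref{p.KPZ FP convergence for V shape bernoulli}, your reduction differs from the paper's in a step you flag yourself as the ``main obstacle'': the paper does not convert the $\mrm{Geo}(q)$ overtake bound into a displacement bound via a density-concentration estimate, but instead singles out the discrepancy particles one at a time by color refinement, plants order $M\varepsilon^{-1/3}$ genuine color~2 particles by thinning, and uses the flux identity \eqref{e.flux difference} together with two applications of Proposition~\ref{p.KPZ FP convergence via approximation bernoullis} to show few color~2 particles end up right of $-K\floor{\varepsilon^{-2/3}}$; the overtake bound then directly localizes each low-color particle without any displacement conversion. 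But the primary issue remains that the statement you were asked to prove is the geometric domination itself, and your text contains no proof of it.
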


\cite[Corollary 3.2]{drillick2024stochastic} is stated only for the stochastic six-vertex model, while we also include ASEP in the above statement. However, as noted after \cite[Theorem 1.6]{drillick2024stochastic}, since the bound has no dependencies on the scale of the system, taking the limit of S6V to ASEP as in \cite{aggarwal2017convergence} yields the same statement for ASEP.

We will also need a finite speed of propagation statement, saying that in time $\varepsilon^{-1}$, a particle will, with very high probability, not move by more than $\varepsilon^{-2}$. As its proof is standard, we defer it to Appendix~\ref{app.other proofs}.

\begin{lemma}\label{l.finite speed of propagation}
Let $(\eta_t)_t$ be a colored S6V process or a colored ASEP process. Let $i$ be a color such that there is a rightmost particle of color $i$. There exists an absolute constant $c>0$ such that, with probability at least $1-\exp(-c\varepsilon^{-1})$, for any $\varepsilon>0$, the rightmost particle of color $i$ travels by at most $\varepsilon^{-2}$ in time $\floor{\varepsilon^{-1}}$.
\end{lemma}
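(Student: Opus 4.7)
The plan is to dominate the total displacement of the rightmost color $i$ particle over time $\floor{\varepsilon^{-1}}$ by a sum of random variables with exponential tails, and then apply a standard concentration inequality. For $\varepsilon \geq 1$ the time $\floor{\varepsilon^{-1}}$ vanishes and the statement is immediate, so the content lies in the regime of small $\varepsilon$ where $\varepsilon^{-2} \gg \varepsilon^{-1}$.

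For ASEP, I would handle rightward and leftward motion separately. A rightward jump of the rightmost color $i$ particle requires its own right Poisson clock to ring at its current location (whether the attempted swap then succeeds only reduces the count of actual jumps). The strong Markov property applied at each jump time shows that successive inter-jump waiting times stochastically dominate independent $\Exp(1)$ variables, so the number of rightward jumps in time $\varepsilon^{-1}$ is dominated by a $\Pois(\varepsilon^{-1})$ random variable. The standard Chernoff bound then gives the probability of exceeding $\varepsilon^{-2}$ as at most $\exp(-c\varepsilon^{-1})$ for $\varepsilon$ small. The leftward direction is analogous, using the rate-$q$ left clocks.

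For S6V the particle moves only rightward, and I would bound the single-step rightward displacement by a geometric random variable. In each time step $t \to t+1$, the new position is the horizontal endpoint of the up-right arrow path starting at the horizontal arrow entering $(t+1, x_0)$, where $x_0$ is the position at time $t$. At each vertex on the vertical portion of that path, either no horizontal arrow enters (in which case the vertical arrow turns horizontal with probability $qb^{\shortrightarrow}$) or a strictly higher color horizontal arrow enters (in which case a swap terminates the vertical travel with probability $1-b^{\shortrightarrow}$, by the colored weights of Figure~\ref{f.colored R weights}). Setting $p := \min(qb^{\shortrightarrow}, 1-b^{\shortrightarrow}) > 0$, and using that the basic-coupling Bernoulli variables $X^{\shortuparrow}_v, X^{\shortrightarrow}_v$ at each vertex $v$ are independent of the past (Section~\ref{s.s6v basic coupling}), the per-step displacement is stochastically dominated by an independent $\Geo(p)$ variable. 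Summing $\floor{\varepsilon^{-1}}$ such i.i.d.\ variables and invoking Cram\'er's theorem gives the probability of exceeding $\varepsilon^{-2}$ as at most $\exp(-c\varepsilon^{-1})$. The main subtlety is justifying this per-vertex domination in the S6V case given that higher color arrows entering vertices on the path are not themselves fresh; however, since the outcome at each traversed vertex depends only on the fresh Bernoulli at that vertex (the already-determined higher color arrow just serves as one possible input configuration), the lower bound $p$ on the termination probability applies uniformly and the geometric domination goes through.
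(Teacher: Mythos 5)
Your S6V argument is structurally sound and close to the paper's, but there is a probability confusion: when no horizontal arrow enters, the vertical arrow \emph{stays} vertical with probability $qb^{\shortrightarrow}$, hence turns horizontal with probability $1-qb^{\shortrightarrow}$, not with probability $qb^{\shortrightarrow}$ as you write. Consequently the stopping-probability lower bound should be $p = \min(1-qb^{\shortrightarrow},\, 1-b^{\shortrightarrow}) = 1-b^{\shortrightarrow}$, not $\min(qb^{\shortrightarrow},\, 1-b^{\shortrightarrow})$; as written your $p$ equals $0$ when $q=0$, which would make the geometric domination vacuous in the TASEP-degeneration case. With the corrected $p$, the per-column geometric domination is valid, and your Cram\'er-style bound on the sum is a legitimate alternative to the paper's shorter route (the paper pigeonholes: if the total displacement exceeds $\varepsilon^{-2}$ over $\floor{\varepsilon^{-1}}$ columns, at least one column contributes $\geq \varepsilon^{-1}$, and then one bounds a single column and union-bounds over $\floor{\varepsilon^{-1}}$ columns, which avoids any concentration inequality).

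The ASEP part has a genuine gap. In colored ASEP, a particle of color $i$ at site $y$ can move to $y+1$ \emph{without} $\xi_y^{\mrm{R}}$ ringing: if a strictly higher-color particle sits at $y+1$ and $\xi_{y+1}^{\mrm{L}}$ rings (rate $q$), the left-jump attempt of that higher-color particle succeeds by swapping, carrying the color-$i$ particle one step to the right. Your claim that ``a rightward jump of the rightmost color $i$ particle requires its own right Poisson clock to ring at its current location'' is therefore false for $q>0$, and the $\Exp(1)$ waiting-time domination does not follow. The same issue arises in the leftward direction, where a higher-color particle at $y-1$ attempting a right jump (rate $1$) can also displace the color-$i$ particle leftward. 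The paper's argument explicitly accounts for this by dominating the rightward motion of the color-$i$ particle by a free particle jumping right at rate $1+q$ (its own right clock at rate $1$ plus the neighboring left clock at rate $q$). Your approach can be repaired by enlarging the Poisson rate accordingly, but as written it misses an entire mechanism of particle displacement.
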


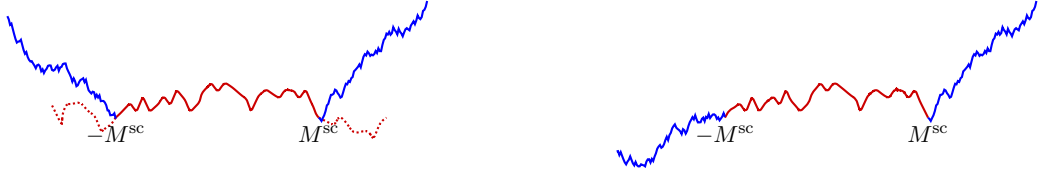
\begin{figure}
\begin{tikzpicture}[scale=0.9,default/.style={very thick,decorate, decoration={random steps,segment length=3pt,amplitude=1pt}}, use fpu reciprocal]

\draw[red!80!black, default, thick, rounded corners=1pt] (0,0) -- ++(0.2,0.2) -- ++(0.1,-0.1) -- ++(0.1, 0.2) -- ++(0.1, -0.2) -- ++(0.2,0.2) -- ++ (0.1, -0.1) -- ++(0.1,0.2) -- ++(0.2, -0.3) -- ++(0.3, 0.4) -- ++(0.1,-0.1) -- ++(0.1,0.1) -- ++(0.3, -0.2) -- ++(0.1, -0.2) -- ++(0.1, 0.2) -- ++(0.1,0.1) -- ++(0.2, -0.1) -- ++(0.1, 0.1) -- ++(0.1,0) -- ++(0.1,-0.1) -- ++(0.1,0.1) -- ++(0.1, -0.2) -- ++(0.1,-0.2);

\draw[red!80!black, default, thick, rounded corners=1.25pt, densely dotted] (0,0) -- ++(-0.2,-0.2) -- ++(-0.1, 0.2) -- ++(-0.2, 0.2) -- ++(-0.2,0) -- ++(-0.1,-0.3) -- ++(-0.1, 0.2) -- ++(-0.1, 0);

\draw[red!80!black, default, thick, rounded corners=1.25pt, densely dotted] (3,0) -- ++(0.2,-0.1) -- ++(0.1, 0.1) -- ++(0.2, -0.2) -- ++(0.2,0) -- ++(0.1,-0.1) -- ++(0.1, 0.2) -- ++(0.1, 0.1);

\bb{80}{0.02}{0.07}{0}{1.7}{thick, blue}{46}{3} 
\bb{80}{0.02}{0.07}{1.5}{0}{thick, blue}{50}{-1.6} 

\node[anchor=north, scale=0.8] at (0,0) {$-M^\mrm{sc}$};
\node[anchor=north, scale=0.8] at (3,0) {$M^\mrm{sc}$};

\begin{scope}[shift={(9,0)}]
\draw[red!80!black, default, thick, rounded corners=1.25pt] (0,0) -- ++(0.2,0.2) -- ++(0.1,-0.1) -- ++(0.1, 0.2) -- ++(0.1, -0.2) -- ++(0.2,0.2) -- ++ (0.1, -0.1) -- ++(0.1,0.2) -- ++(0.2, -0.3) -- ++(0.3, 0.4) -- ++(0.1,-0.1) -- ++(0.1,0.1) -- ++(0.3, -0.2) -- ++(0.1, -0.2) -- ++(0.1, 0.2) -- ++(0.1,0.1) -- ++(0.2, -0.1) -- ++(0.1, 0.1) -- ++(0.1,0) -- ++(0.1,-0.1) -- ++(0.1,0.1) -- ++(0.1, -0.2) -- ++(0.1,-0.2);


\bb{80}{0.02}{0.07}{0}{1.7}{thick, blue}{46}{3} 
\bb{80}{0.02}{0.07}{-0.5}{0}{thick, blue}{50}{-1.6} 

\node[anchor=north, scale=0.8] at (0,0) {$-M^\mrm{sc}$};
\node[anchor=north, scale=0.8] at (3,0) {$M^\mrm{sc}$};
\end{scope}
\end{tikzpicture}
\caption{A depiction of the height functions $\h_0^{M,\varepsilon}$ (left) and $\tilde \h_0^{M,\varepsilon}$ (right). The red portion in the middle of both is $\h_0^\varepsilon$ when restricted to an interval $[-M,M]$ around 0 (and the dotted portions in the left figure are $\h_0$ outside $[-M,M]$).}\label{f.dominating height functions}
\end{figure}

\begin{proof}[Proof of Proposition~\ref{p.KPZ FP convergence for V shape bernoulli}]
We give the proof in the case of S6V and explain the minor change for ASEP at the end. 
We wish to show that there exists a sequence $M_\varepsilon\to\infty$ such that $\h^{\mrm{S6V},\varepsilon}(\h_0^{M_\varepsilon,\varepsilon}; \bm\cdot, 1)$ converges weakly, as $\varepsilon\to 0$, to $\h(\h_0;\bm\cdot, 1)$ uniformly on compact sets. We fix an arbitrary $K>0$ and consider the compact set $[-K^\mrm{sc},K^\mrm{sc}]$ ($K^{\mrm{sc}}$ as defined in \eqref{e.Msc}), and we will produce such a sequence $M_\varepsilon$ that does not depend on $K$. This suffices since weak convergence uniformly on every fixed compact set $K$ is equivalent to weak convergence in the topology of uniform convergence on compact sets.

A brief outline of the proof is as follows. We will produce an auxiliary particle configuration $\tilde\eta_0^{M,\varepsilon}$ whose associated height function satisfies the assumptions of Proposition~\ref{p.KPZ FP convergence via approximation bernoullis}, thus converges to the KPZ fixed point from $\h_0$ as $\varepsilon\to0$ and $M\to\infty$. We will couple this particle configuration to $\eta_0^{M,\varepsilon}$ such that $\eta_0^{M,\varepsilon}\geq \tilde\eta_0^{M,\varepsilon}$. To show the convergence to $\h(\h_0;\bm\cdot, t)$ for $\h^{\mrm{S6V},\varepsilon}(\h_0^{M_\varepsilon,\varepsilon}; \bm\cdot, 1)$, it suffices to show that the discrepancy (which are color 1 particles) between the two particle configurations $\eta_0^{M,\varepsilon}$ and $\tilde\eta_0^{M,\varepsilon}$ goes to zero as $\varepsilon\to0$ and $M\to\infty$. The bulk of the proof will be devoted to establishing this.

Next we introduce the auxiliary initial configuration $\tilde\eta_0^{M, \varepsilon}:\Z\to\{0,1\}$ which agrees with $\eta_0^{M,\varepsilon}$ to the right of $-M\floor{\varepsilon^{-2/3}}$ and is \iid Bern($\rho_{\varepsilon,-\lambda})$ to the left of $-M\floor{\varepsilon^{-2/3}}$; in particular, it will be \iid Bern($\rho_{\varepsilon,-\lambda})$ outside of $\intint{-M\floor{\varepsilon^{-2/3}}, M\floor{\varepsilon^{-2/3}}}$ (as in the assumptions of Proposition~\ref{p.KPZ FP convergence via approximation bernoullis}). More precisely, $\tilde\eta_0^{M, \varepsilon}$ is defined by
\begin{align}\label{e.gen init condition tilde eta}
\tilde\eta^{M, \varepsilon}_0(x)= \begin{cases}
\eta^{M,\varepsilon}_0(x) & \text{if } x \geq -M\floor{\varepsilon^{-2/3}} \text{ and} \\
\xi^\varepsilon_{-}(x) & \text{if } x < -M\floor{\varepsilon^{-2/3}},
\end{cases}
\end{align}
where $\xi^\varepsilon_{-}(x)\sim\mrm{Bern}(\rho_{\varepsilon,-\lambda})$ independently across $x\in\Z$. Let $\tilde h^{M,\varepsilon}_0$ be the associated height function with $\tilde h^{M,\varepsilon}_0(0) = h^{M,\varepsilon}_0(0)$ and let $\tilde \h_0^M:\R\to\R$ be given by (see right panel of Figure~\ref{f.dominating height functions})
\begin{align*}
 \tilde \h_0^M(x) = \begin{cases}
 \h^M_0(x) & \text{if } x>-M^\mrm{sc} \text{ and} \\
 B(x+M^\mrm{sc}) +\lambda (x+M^\mrm{sc}) + \h^M_0(-M^\mrm{sc}) & \text{if } x<-M^\mrm{sc},
 \end{cases}
 \end{align*}
 where $B:(-\infty,0]\to\R$ is a reversed rate 2 Brownian motion. By our observation that $\tilde \eta_0^{M,\varepsilon}$ is \iid Bern($\rho_{\varepsilon,-\lambda}$) outside $\intint{-M\floor{\varepsilon^{-2/3}}, M\floor{\varepsilon^{-2/3}}}$, Proposition~\ref{p.KPZ FP convergence via approximation bernoullis} and Corollary~\ref{c.agreement of fixed points} yield that 
\begin{align}\label{e.gen init condition first FP convergence}
\lim_{M\to\infty} \lim_{\varepsilon\to0} \h^{\mrm{S6V},\varepsilon}(\tilde \h^{M,\varepsilon}_0; \bm\cdot, 1) \stackrel{d}{=} \h(\h_0; \bm\cdot, 1),
\end{align}
where the convergence is in distribution in the topology of uniform convergence on compact sets. Thus to establish the same convergence for $\h^{\mrm{S6V},\varepsilon}(\h_0^{M,\varepsilon}; \bm\cdot, 1)$, it suffices to show that
\begin{align*}
\lim_{M\to\infty} \lim_{\varepsilon\to 0}\sup_{[-K^\mrm{sc},K^\mrm{sc}]}\left|\h^{\mrm{S6V},\varepsilon}(\h_0^{M,\varepsilon}; \bm\cdot, 1) -  \h^{\mrm{S6V},\varepsilon}(\tilde \h^{M,\varepsilon}_0; \bm\cdot, 1)\right| = 0,
\end{align*}
where the limit is in probability.

To this end, we first couple $\xi^\varepsilon_{-}$ and $\eta_0^{M,\varepsilon}$ such that $\xi^\varepsilon_{-}(x) \leq \eta_0^{M,\varepsilon}(x)$ for all $x < -M\floor{\varepsilon^{-2/3}}$, by writing $\xi^\varepsilon_{-}$ as a thinning of $\eta_0^{M,\varepsilon}$, i.e., each particle in $\eta_0^{M,\varepsilon}$ is independently removed with probability $2\lambda\sigma(\alpha)\beta(\alpha)^{-1}\varepsilon^{1/3}/\rho_{\varepsilon,\lambda}$ to yield $\xi^\varepsilon_{-}$. Doing so, the particle configurations $\tilde\eta_0^{M,\varepsilon}$ and $\eta_0^{M,\varepsilon}$ are also ordered, i.e., 
\begin{equation}\label{e.eta ordering for V argument}
\tilde\eta_0^{M,\varepsilon}(x) \leq \eta_0^{M,\varepsilon}(x)\ \ \text{for all } x\in\Z;
\end{equation}
see Figure~\ref{f.particle configurations}.

We now couple the evolutions from both these initial configurations by the basic coupling. Since the initial configurations are ordered, this can equivalently be written in terms of the evolution of a colored system $\tilde\eta^{\mrm{col},M,\varepsilon}_0:\Z\to\{-\infty,1,2\}$ defined (see Figure~\ref{f.particle configurations}) by 
\begin{align}
\tilde\eta^{\mrm{col},M,\varepsilon}_0(x) &= \begin{cases}
2 & \text{if } \tilde\eta_0^{M, \varepsilon}(x) = 1,\\
1 & \text{if } \tilde\eta_0^{M, \varepsilon}(x) = 0 \text{ and } \eta_0^{M,\varepsilon}(x) = 1, \text{ and} \\
-\infty & \text{if } \eta_0^{M,\varepsilon}(x) = 0
\end{cases}\nonumber\\[6pt]
&= \begin{cases}
2 & \text{if } \eta_0^{M,\varepsilon}(x) = 1 \text{ and } x \geq -M\floor{\varepsilon^{-2/3}};\\
2 & \text{if } \xi^\varepsilon_{-}(x) = 1 \text{ and } x < -M\floor{\varepsilon^{-2/3}}; \\
1 & \text{if } \eta_0^{M,\varepsilon}(x) = 1, \xi^\varepsilon_{-}(x) = 0, \text{ and } x < -M\floor{\varepsilon^{-2/3}}; \text{ and} \\
-\infty & \text{otherwise}.
\end{cases}\label{e.tilde eta_0 col}
\end{align}
Observe that the evolution of the color 2 particles alone  is that of $\tilde\eta^{M,\varepsilon}_0$ (whose height function we know converges to the KPZ fixed point as in \eqref{e.gen init condition first FP convergence}), while that of the color 1 and color 2 particles together if the labels are ignored is that of $\eta^{M,\varepsilon}_0$. Therefore, to show the convergence on compact sets of the height function associated to $\eta_0^{M,\varepsilon}$ to the KPZ fixed point, it suffices to show that the number of color 1 particles in $\tilde\eta^{\mrm{col},M,\varepsilon}_t$ at $t=\varepsilon^{-1}$ that are to the right of $-K\floor{\varepsilon^{-2/3}}$ is $o_\varepsilon(\varepsilon^{-1/3})$ with probability $1-o_\varepsilon(1)$ as $\varepsilon\to 0$. We will do this by showing that (i) the color 1 particles could not overtake many color 2 particles, and (ii) that not too many color 2 particles can be to the right of $-K\floor{\varepsilon^{-2/3}}$ at time $t=\floor{\varepsilon^{-1}}$. 

\begin{figure}
\begin{adjustbox}{scale=0.95}
\begin{tikzpicture}
\foreach \y in {-2, -1, 0, 1}
{
  \draw (-2.25,\y) -- (7.75,\y);
  \draw[dashed] (7.75,\y) -- ++(0.5,0);
  \draw[dashed] (-2.25,\y) -- ++(-0.5,0);

  \draw[dotted, semithick] (0.275, \y+ 0.25) -- ++(0,-0.5);
  \draw[dotted, semithick] (1.925, \y+ 0.25) -- ++(0,-0.5);
  \draw[dotted, semithick] (5.225, \y+ 0.25) -- ++(0,-0.5);
}

\foreach \x/\thecolor in {-4/black, -3/white, -2/black, -1/black, 0/black, 1/black, 2/white, 3/black, 4/black, 5/white, 6/black, 7/black, 8/black, 9/white, 10/black, 11/black, 12/white, 13/black, 14/black}
    \node[circle, fill=\thecolor, inner sep = 1.75pt, draw=black] at (0.55*\x, 1) {}; 

\foreach \x/\thecolor in {-4/white, -3/white, -2/black, -1/white, 0/white, 1/black, 2/white, 3/black, 4/black, 5/white, 6/black, 7/black, 8/black, 9/white, 10/black, 11/black, 12/white, 13/black, 14/black}
    \node[circle, fill=\thecolor, inner sep = 1.75pt, draw=black] at (0.55*\x, 0) {}; 

\foreach \x/\thecolor in {-4/red, -3/white, -2/black, -1/red, 0/red, 1/black, 2/white, 3/black, 4/black, 5/white, 6/black, 7/black, 8/black, 9/white, 10/black, 11/black, 12/white, 13/black, 14/black}
    \node[circle, fill=\thecolor, inner sep = 1.75pt, draw=black] at (0.55*\x, -1) {}; 

\foreach \x/\thecolor/\thetext in {-4/green!60!black/-1, -3/white/ , -2/cyan/3, -1/yellow/0, 0/red/1, 1/cyan/3, 2/white/ , 3/black/2, 4/cyan/3, 5/white/ , 6/cyan/3, 7/cyan/3, 8/cyan/3, 9/white/ , 10/cyan/3, 11/cyan/3, 12/white/ , 13/cyan/3, 14/cyan/3}
{
    \node[circle, fill=\thecolor, inner sep = 1.75pt, draw=black] at (0.55*\x, -2) {}; 

    \node[anchor=south, scale=0.75] at (0.55*\x, -1.9) {$\thetext$}; 
    
}

\node[anchor=south west, scale=0.9] at (7.75,1) {$\eta_0^{M,\varepsilon}$};
\node[anchor=south west, scale=0.9] at (7.75,0) {$\tilde \eta_0^{M,\varepsilon}$};
\node[anchor=south west, scale=0.9] at (7.75,-1) {$\tilde\eta_0^{\mrm{col}, M,\varepsilon}$};
\node[anchor=south west, scale=0.9] at (7.75,-2) {$\eta_0^{\mrm{col}, M,\varepsilon}$};

\node[scale=0.7, anchor=north] at (0.275, -2.25) {$-M\floor{\varepsilon^{-2/3}}$};
\node[scale=0.7, anchor=north] at (1.925, -2.25) {$-\tfrac{1}{2}M\floor{\varepsilon^{-2/3}}$};
\node[scale=0.7, anchor=north] at (5.225, -2.25) {$M\floor{\varepsilon^{-2/3}}$};

\end{tikzpicture}
\end{adjustbox}
\caption{The particle configurations $\eta_0^{M,\varepsilon}$, $\tilde\eta_0^{M,\varepsilon}$, $\tilde\eta_0^{\mrm{col}, M, \varepsilon}$, and $\eta_0^{\mrm{col}, M, \varepsilon}$. In the top two, i.e., $\eta_0^{M,\varepsilon}$ and $\tilde\eta_0^{M,\varepsilon}$, $\tilde\eta^{\mrm{col}, M, \varepsilon}$, black indicates a particle and white indicates a whole. For the last two, i.e., $\tilde\eta_0^{\mrm{col}, M, \varepsilon}$, and $\eta_0^{\mrm{col}, M, \varepsilon}$,  white indicates color $-\infty$ (hole), green color $-1$, yellow color 0, red color 1, black color 2, and cyan color 3. Note that in $\eta_0^{\mrm{col}, M,\varepsilon}$ all color 2 particles are in $\intint{-\frac{1}{2}M\floor{\varepsilon^{-2/3}}, -M\floor{\varepsilon^{-2/3}}}$ and there is at most one particle of each color that is 1 or lower.}\label{f.particle configurations}
\end{figure}

To establish this, we refine the colors in $\tilde\eta^{\mrm{col},M,\varepsilon}_0$ to single out one color 1 particle at a time. To this end, let the positions of the color 1 particles in $\tilde\eta^{\mrm{col},M,\varepsilon}_0$ be labeled $x_1 > x_2 >  \ldots $, let $\zeta^\varepsilon:\Z\to\{0,1\}$ be such that $\zeta^\varepsilon(x)$ is distributed across $x\in\Z$ as \iid $\mrm{Bern}(1-\varepsilon^{1/3})$ (independent of everything else), and define $\eta^{\mrm{col},M,\varepsilon}_0:\Z\to\Z\cup\{-\infty\}$(see Figure~\ref{f.particle configurations}) by
\begin{align}\label{e.eta_0 col}
\eta^{\mrm{col},M,\varepsilon}_0(x)
&= \begin{cases}
3 & \text{if } x > -\tfrac{1}{2}M\floor{\varepsilon^{-2/3}} \text{ or } x < -M\floor{\varepsilon^{-2/3}}, \text{ and }  \tilde\eta_0^{M, \varepsilon}(x) = 1; \\
2 + \zeta^\varepsilon(x) & \text{if } \eta_0^{M,\varepsilon}(x) = 1 \text{ and } x \in \intint{-M\floor{\varepsilon^{-2/3}}, -\tfrac{1}{2}M\floor{\varepsilon^{-2/3}}}; \\
2-k & \text{if } x=x_k; \text{ and} \\
-\infty & \text{otherwise}.
\end{cases}
\end{align}
In words, an $\varepsilon^{1/3}$ proportion of the particles in $\tilde\eta^{\mrm{col},M,\varepsilon}_0$ of color 2 which are in $\intint{-M\floor{\varepsilon^{-2/3}}, -\frac{1}{2}M\floor{\varepsilon^{-2/3}}}$ remain of color 2; the remaining particles of color 2 in $\tilde\eta^{\mrm{col},M,\varepsilon}_0$ in $\intint{-M\floor{\varepsilon^{-2/3}}, -\frac{1}{2}M\floor{\varepsilon^{-2/3}}}$ and all those outside $\intint{-M\floor{\varepsilon^{-2/3}}, -\frac{1}{2}M\floor{\varepsilon^{-2/3}}}$ are made to be color 3; and the particles of color 1 in $\tilde\eta^{\mrm{col},M,\varepsilon}_0$ have their colors decreased sequentially from right to left. Observe that by merging appropriate intervals of particle colors in $\eta^{\mrm{col},M,\varepsilon}_0$ one obtains the colored particle configuration in $\tilde\eta^{\mrm{col},M,\varepsilon}_0$; thus one can evolve $\eta^{\mrm{col},M,\varepsilon}_0$ and do the merging to obtain the evolution of $\tilde\eta^{\mrm{col},M,\varepsilon}_0$. In particular, the evolution of color 2 and 3 particles combined in $\eta_0^{\mrm{col},M,\varepsilon}$, on ignoring the labels, is the same as that of the color 2 particles in $\tilde\eta_0^{\mrm{col},M,\varepsilon}$, i.e, the (uncolored) particles in $\tilde\eta_0^{M,\varepsilon}$ (recall from \eqref{e.tilde eta_0 col}).

\medskip

\noindent \textbf{Control on number of color 2 particles to right of $\bm{-K\floor{\varepsilon^{-2/3}}}$.}
Now we show (ii) above, i.e., that not too many color 2 particles can be to the right of $-K\floor{\varepsilon^{-2/3}}$ at time $t=\floor{\varepsilon^{-1}}$.  Let $\tilde h_{0,3}^{M,\varepsilon}$ be the height function associated to particle configuration $x\mapsto\one_{\eta_0^{\mrm{col},M,\varepsilon}(x) = 3}$ with $\smash{\tilde h^{M,\varepsilon}_{0,3}}(0) = h^{\varepsilon}_0(0)$. 

Observe from \eqref{e.dominating particle configuration for V}, \eqref{e.gen init condition tilde eta}, and \eqref{e.eta_0 col} that, outside of $\intint{-M\floor{\varepsilon^{-2/3}}, M\floor{\varepsilon^{-2/3}}}$, this particle configuration is \iid $\mrm{Bern}(\rho_{\varepsilon, -\lambda})$, and its rescaled height function converges uniformly on compact sets to a continuous function (since this is true for $h_0^\varepsilon$). In particular, the assumptions of Proposition~\ref{p.KPZ FP convergence via approximation bernoullis} are satisfied for $\tilde h_{0,3}^{M,\varepsilon}$. Thus by Proposition~\ref{p.KPZ FP convergence via approximation bernoullis} and Corollary~\ref{c.agreement of fixed points},
\begin{align}\label{e.gen init condition second FP convergence}
 \lim_{M\to\infty} \lim_{\varepsilon\to0} \h^{\mrm{S6V},\varepsilon}(\tilde \h^{M,\varepsilon}_{0,3}; \bm\cdot, 1) = \h(\h_0; \bm\cdot, 1),
 \end{align}
 where the convergence is again in distribution in the topology of uniform convergence on compact sets.
Now, for any $t\geq 0$ and $M>0$,
\begin{align}
\MoveEqLeft[6]
\#\left\{x\geq -K\floor{\varepsilon^{-2/3}}: \eta_t^{\mrm{col},M,\varepsilon}(x) = 2\right\}\nonumber\\
&= \#\left\{x\geq -K\floor{\varepsilon^{-2/3}} : \eta_t^{\mrm{col},M,\varepsilon}(x) \geq 2\right\} - \#\left\{x\geq -K\floor{\varepsilon^{-2/3}} : \eta_t^{\mrm{col},M,\varepsilon}(x) = 3\right\}\nonumber\\
&= h^{\mrm{S6V},\varepsilon}\left(\tilde h_0^{M,\varepsilon}; -K\floor{\varepsilon^{-2/3}}, t\right) - h^{\mrm{S6V},\varepsilon}\left(\tilde h_{0,3}^{M,\varepsilon}; -K\floor{\varepsilon^{-2/3}}, t\right);\label{e.flux difference}
\end{align}
this uses that $\tilde h_0^{M,\varepsilon}(-K\floor{\varepsilon^{-2/3}}) = \tilde h^{M,\varepsilon}_{0,3}(-K\floor{\varepsilon^{-2/3}})$ by assumption and that
$$h^{\mrm{S6V},\varepsilon}(\tilde h_0^{M,\varepsilon}; -K\floor{\varepsilon^{-2/3}}, t) - \tilde h_0^{M,\varepsilon}(-K\floor{\varepsilon^{-2/3}})$$
is the flux of particles of color at least 2 in $\eta_0^{\mrm{col},M,\varepsilon}$ across $-K\floor{\varepsilon^{-2/3}}$ ($+1$ for every particle crossing from left to right, and $-1$ for crossing from right to left) by time $\floor{\varepsilon^{-1}}$, and analogously for $\tilde h_{0,3}^{M,\varepsilon}$ with particles of color at least $3$.

From \eqref{e.gen init condition first FP convergence} and \eqref{e.gen init condition second FP convergence}, it follows that there exists a sequence $M_\varepsilon\to\infty$ as $\varepsilon\to 0$ such that both those convergences hold as $\varepsilon\to 0$ if $M$ is replaced by $M_\varepsilon$. So \eqref{e.gen init condition first FP convergence} and \eqref{e.gen init condition second FP convergence} imply that, taking $M=M_\varepsilon$ and $t=\floor{\varepsilon^{-1}}$ in \eqref{e.flux difference}, the rescaled versions of the two functions in \eqref{e.flux difference} converge to the same limit as $\varepsilon\to 0$; in particular, we obtain that
\begin{align*}
\lim_{\varepsilon\to0}\varepsilon^{1/3}\cdot\#\left\{x\geq -K\floor{\varepsilon^{-2/3}}: \eta_{\floor{\varepsilon^{-1}}}^{\mrm{col},M_\varepsilon,\varepsilon}(x) = 2\right\} = 0,
\end{align*}
where the convergence is in distribution, and, since the limit is a constant, also in probability. In particular, for any $\delta>0$, we obtain
\begin{align}\label{e.count prob}
\lim_{\varepsilon\to 0}\P\left((\msf{Count}^{(2), \varepsilon}_{\delta})^c\right) = 0,
\end{align}
where
\begin{align*}
\msf{Count}^{(2), \varepsilon}_{\delta} := \left\{\#\left\{x\geq -K\floor{\varepsilon^{-2/3}}: \eta_{\floor{\varepsilon^{-1}}}^{\mrm{col},M_\varepsilon,\varepsilon}(x) = 2\right\} < \delta \varepsilon^{-1/3}\right\}.
\end{align*}

\medskip

\noindent\textbf{Lower bound on number of color 2 particles.} We will also need that there was originally a large number of color 2 particles in $\eta^{\mrm{col},M_\varepsilon,\varepsilon}_0$ to the left of $-K\floor{\varepsilon^{-2/3}}$. Note from \eqref{e.eta_0 col} that all color 2 particles in $\eta^{\mrm{col},M_\varepsilon,\varepsilon}_0$ are to the left of $-M_\varepsilon\floor{\varepsilon^{-2/3}}$, so we just need a lower bound on the total number of color 2 particles in $\eta^{\mrm{col},M_\varepsilon,\varepsilon}_0$. From \eqref{e.eta_0 col}, and recalling that $\xi^\varepsilon:\Z\to\Z$ is an \iid $\mrm{Bern}(1-\varepsilon^{1/3})$ process, we see that this is exactly the same as 
\begin{align}\label{e.total color 2 cardinality}
\MoveEqLeft[16]
\#\left\{x\in\intint{-M_\varepsilon\floor{\varepsilon^{-2/3}}, -\tfrac{1}{2}M_\varepsilon\floor{\varepsilon^{-2/3}}}: \eta_0^{M,\varepsilon}(x)= 1, \xi^\varepsilon(x) = 0\right\}.
\end{align}
Now, the cardinality of the same set without the condition that $\xi^\varepsilon(x) = 0$ is $N:=h_0^{M_\varepsilon,\varepsilon}(-M_\varepsilon\floor{\varepsilon^{-2/3}}) - h_0^{M_\varepsilon,\varepsilon}(-\tfrac{1}{2}M_\varepsilon\floor{\varepsilon^{-2/3}})$, and $\xi^\varepsilon(x) = 0$ occurs with probability $\varepsilon^{1/3}$ for each $x$ in that set. So the cardinality of the set in \eqref{e.total color 2 cardinality} has the distribution of a sum of $N$ \iid $\mrm{Bern}(\varepsilon^{1/3})$ random variables. Thus, by a concentration inequality for sums of Bernoulli random variables, \eqref{e.total color 2 cardinality} is at least $\frac{1}{2}\varepsilon^{1/3}N$ with probability at least $1-\exp(-c\varepsilon^{1/3}N)$. 
Since $h_0^{M_\varepsilon,\varepsilon}$ converges to a continuous function on rescaling (as in Definition~\ref{d.rescaled S6V height function}), it follows that, deterministically, for all small enough $\varepsilon$,
\begin{align*}
N \geq \tfrac{1}{4}|\mu'(\alpha)|M_\varepsilon\varepsilon^{-2/3}.
\end{align*}
Combining the above, with probability at least $1-\exp(-cM_\varepsilon\varepsilon^{-1/3})$, at least $\tfrac{1}{8}|\mu'(\alpha)|M_\varepsilon\varepsilon^{-1/3}$ color 2 particles are all to the left of $-K\floor{\varepsilon^{-2/3}}$ in $\eta_0^{\mrm{col},M_\varepsilon,\varepsilon}$. Calling 
$$\msf{TotalCount}^{(2),\varepsilon}:=\left\{\#\{x\leq -K\floor{\varepsilon^{-2/3}}: \eta^{\mrm{col},M_\varepsilon,\varepsilon}_0 = 2\} \geq \tfrac{1}{8}|\mu'(\alpha)|M_\varepsilon\varepsilon^{-1/3}\right\},$$
we have shown that
\begin{align}\label{e.lot of color 2}
\P\left(\msf{TotalCount}^{(2),\varepsilon}\right) \geq 1-\exp(-cM_\varepsilon\varepsilon^{-1/3}).
\end{align}

\medskip

\noindent\textbf{Lower color particles cannot overtake too many color 2 particles.} We have now shown that there are a large number of color 2 particles in the system, and not too many of them to the right of $-K\floor{\varepsilon^{-1/3}}$ at time $\varepsilon^{-1}$. In particular, at least order $\varepsilon^{-1/3}$ many color 2 particles are to the \emph{left} of $-K\floor{\varepsilon^{-1/3}}$ at time $\varepsilon^{-1}$. Next we show using Lemma~\ref{l.overtaking} that not too many particles of lower color can overtake color 2 particles, which will imply that, with high probability, not many lower color particles (in fact, none) can be to the right of $-K\floor{\varepsilon^{-1/3}}$ at time $\varepsilon^{-1}$.

Let $\smash{\msf{Overtake}^{(1)}_{\varepsilon}}$ be the event that the unique color 1 particle in $\smash{\eta_{\floor{\varepsilon^{-1}}}^{\mrm{col},M_\varepsilon,\varepsilon}}$ is to the right of the $\floor{\varepsilon^{-1/6}}$\th color 2 particle (counted from the left).
Then by Lemma~\ref{l.overtaking} with $t=\floor{\varepsilon^{-1}}$, we obtain that
\begin{align*}
 \P\left(\msf{Overtake}^{(1)}_{\varepsilon}\right) \leq q^{\floor{\varepsilon^{-1/6}}}.
\end{align*}
 Observe that, on the event $\smash{\msf{TotalCount}^{(2),\varepsilon}}\cap\smash{\msf{Count}^{(2), \varepsilon}_{\delta}}\cap\smash{\msf{Overtake}^{(1)}_{\varepsilon}}$, the color 1 particle is to the left of $-K\floor{\varepsilon^{-2/3}}$ at time $\floor{\varepsilon^{-1}}$. For $i=-1,0, \ldots, \floor{\varepsilon^{-2}}$, define $\smash{\msf{Overtake}^{(i)}_{\varepsilon}}$ to be the event that the unique color $-i$ particle in $\smash{\eta_{\floor{\varepsilon^{-1}}}^{\mrm{col},M_\varepsilon,\varepsilon}}$ is to the right of the $\floor{\varepsilon^{-1/6}}$\th color 2 particle (counted from the left), and, if no color $-i$ particle exists, the empty set. 
Then it also holds that, on $\smash{\msf{TotalCount}^{(2),\varepsilon}}\cap\msf{Count}^{(2), \varepsilon}_{\delta}\cap\smash{\msf{Overtake}^{(i)}_{\varepsilon}}$, the color $-i$ particle (if it exists) is to the left of $-K\floor{\varepsilon^{-2/3}}$ at time $\floor{\varepsilon^{-1}}$.

As observed earlier, by color merging, the same argument as for the color 1 particle applies to the color $-i$ particle, yielding, for all $i= -1,0, \ldots, \floor{\varepsilon^{-2}}$,
\begin{align}\label{e.overtake prob}
\P\left(\msf{Overtake}^{(i)}_{\varepsilon}\right) \leq q^{\floor{\varepsilon^{-1/6}}}.
\end{align}

Let $\msf{Overtake}^{(\geq \floor{\varepsilon^{-2}})}_{\varepsilon}$ be the event that any particle of color less than or equal to $-\floor{\varepsilon^{-2}}$ is to the right of $-K\floor{\varepsilon^{-2/3}}$ in $\smash{\eta^{\mrm{col},M_\varepsilon,\varepsilon}_{\floor{\varepsilon^{-2}}}}$, i.e., at time $t=\varepsilon^{-1}$. Then the finite speed of propagation estimate (Lemma~\ref{l.finite speed of propagation}) yields that 
\begin{equation}\label{e.overtake remaining prob}
\P\left(\msf{Overtake}^{(\geq \floor{\varepsilon^{-2}})}_{\varepsilon}\right) \leq \exp(-c\varepsilon^{-1}).
\end{equation}
Thus a union bound combined with \eqref{e.count prob}, \eqref{e.lot of color 2}, \eqref{e.overtake prob}, and \eqref{e.overtake remaining prob} shows that, as $\varepsilon\to 0$,  
\begin{align*}
\MoveEqLeft[6]
\P\left(\msf{TotalCount}^{(2),\varepsilon}\cap\msf{Count}^{(2), \varepsilon}_{\delta} \cap (\msf{Overtake}^{(\geq \floor{\varepsilon^{-2}})}_{\varepsilon})^c\cap\bigcap_{i=-1}^{\floor{\varepsilon^{-2}}}(\msf{Overtake}^{(i)}_{\varepsilon})^c\right)\\
&\geq \P\left(\msf{TotalCount}^{(2),\varepsilon}\cap\msf{Count}^{(2), \varepsilon}_{\delta} \right) - \varepsilon^{-2}q^{\floor{\varepsilon^{-1/6}}} - \exp(-c\varepsilon^{-1})
\to 1.
\end{align*}

Since on $\msf{TotalCount}^{(2),\varepsilon}\cap\msf{Count}^{(2), \varepsilon}_{\delta} \cap (\msf{Overtake}^{(\geq \floor{\varepsilon^{-2}})}_{\varepsilon})^c\cap \bigcap_{i=-1}^{\floor{\varepsilon^{-2}}}(\msf{Overtake}^{(i)}_{\varepsilon})^c$ it holds that no particle of color 1 is to the right of $-K\floor{\varepsilon^{-2/3}}$ in $\tilde\eta_{\floor{\varepsilon^{-1}}}^{\mrm{col},M_\varepsilon,\varepsilon}$, it follows from the previous display that
\begin{align}\label{e.no 1 particles to right}
\lim_{\varepsilon\to 0}\P\left(\#\left\{x\geq -K\floor{\varepsilon^{-2/3}} : \tilde\eta^{\mrm{col},M_\varepsilon,\varepsilon}_{\floor{\varepsilon^{-1}}}(x) = 1\right\} = 0\right) = 1.
\end{align}
Recall from \eqref{e.tilde eta_0 col} that $\{x\in\Z:\tilde\eta_{\floor{\varepsilon^{-1}}}^{\mrm{col},M_\varepsilon,\varepsilon}(x) \geq 1\} = \{x\in\Z:\eta_{\floor{\varepsilon^{-1}}}^{\varepsilon}(x) = 1\}$ and $\{x\in\Z:\tilde\eta_{\floor{\varepsilon^{-1}}}^{\mrm{col},M_\varepsilon,\varepsilon}(x) = 2\} = \{x\in\Z:\tilde\eta_{\floor{\varepsilon^{-1}}}^{M_\varepsilon,\varepsilon}(x) = 1\}$. So we obtain that, for $x\geq -K\floor{\varepsilon^{-2/3}}$,
\begin{align*}
h^{\mrm{S6V}}(h^{M_\varepsilon,\varepsilon}_0; x, \floor{\varepsilon^{-1}}) = 
 h^{\mrm{S6V}}(\tilde h^{M_\varepsilon,\varepsilon}_0; x, \floor{\varepsilon^{-1}}) + \#\{y\geq x: \tilde\eta^{\mrm{col},M_\varepsilon,\varepsilon}_{\floor{\varepsilon^{-1}}}(y) = 1\}.
\end{align*}
The final quantity is clearly non-negative and, on the event in \eqref{e.no 1 particles to right}, converges to $0$ as $\varepsilon\to 0$. Thus after rescaling we obtain that, as $\varepsilon\to 0$,
\begin{align*}
\sup_{x\in[-K^\mrm{sc},K^\mrm{sc}]}\bigl|\h^{\mrm{S6V},\varepsilon}(\h^{M_\varepsilon,\varepsilon}_0; x, 1) - \h^{\mrm{S6V}}(\tilde \h^{M_\varepsilon,\varepsilon}_0; x, 1)\bigr| \to 0,
\end{align*}
where the limit is in probability. Since we know from \eqref{e.gen init condition first FP convergence} that $\h^{\mrm{S6V}}(\tilde \h^{M_\varepsilon,\varepsilon}_0; \bm\cdot, 1)$ converges in distribution to $\h(\h_0;\bm\cdot, 1)$ (in the topology of uniform convergence as compact sets) as $\varepsilon\to 0$, it follows that the same is true for $\h^{\mrm{S6V},\varepsilon}(\h^{M_\varepsilon,\varepsilon}_0; \bm\cdot, 1)$. This completes the proof in the case of S6V.

\medskip

\noindent\textbf{Modifications for ASEP.}  First, as noted earlier, the definition of $\eta_0^{M,\varepsilon}$ for ASEP is as given in \eqref{e.dominating particle configuration for V ASEP} (rather than \eqref{e.dominating particle configuration for V}). Next $\tilde\eta_0^{M,\varepsilon}$ is as in \eqref{e.gen init condition tilde eta} with $\xi^\varepsilon_+$ in place of $\xi^\varepsilon_-$ for $x<-M\floor{\varepsilon^{-2/3}}$. We couple $\tilde\eta_0^{M,\varepsilon}$ and $\eta_0^{M,\varepsilon}$ such that
\begin{align*}
\tilde\eta_0^{M,\varepsilon}(x) \geq \eta_0^{M,\varepsilon}(x) \qquad\text{ for all } x\in\Z
\end{align*}
by coupling $\xi_-^\varepsilon$ to be a thinning of $\xi_+^\varepsilon$; thus we have the opposite ordering of $\tilde\eta_0^{M,\varepsilon}$ and $\eta_0^{M,\varepsilon}$ as compared to \eqref{e.eta ordering for V argument}. We still know from Proposition~\ref{p.KPZ FP convergence via approximation bernoullis} and Corollary~\ref{c.agreement of fixed points} that $\h^{\mrm{S6V},\varepsilon}(\tilde\h_0^{M,\varepsilon}; \bm\cdot, 1)$ converges weakly in the topology of uniform convergence on compact sets to $\h(\h_0;\bm\cdot, 1)$ as $\varepsilon\to0$ and then $M\to\infty$, so we need to show
\begin{align*}
\lim_{M\to\infty} \lim_{\varepsilon\to 0}\sup_{[-K^\mrm{sc},K^\mrm{sc}]}\left|\h^{\mrm{S6V},\varepsilon}(\h_0^{M,\varepsilon}; \bm\cdot, 1) -  \h^{\mrm{S6V},\varepsilon}(\tilde \h^{M,\varepsilon}_0; \bm\cdot, 1)\right| = 0,
\end{align*}
where the limit is in probability. As in the S6V proof, this comes down to controlling the movement of color 1 particles in the colored system
\begin{align*}
\tilde\eta^{\mrm{col},M,\varepsilon}_0(x) &= \begin{cases}
2 & \text{if } \eta_0^{M, \varepsilon}(x) = 1,\\
1 & \text{if } \eta_0^{M, \varepsilon}(x) = 0 \text{ and } \tilde \eta_0^{M,\varepsilon}(x) = 1, \text{ and} \\
-\infty & \text{if } \tilde \eta_0^{M,\varepsilon}(x) = 0
\end{cases}\nonumber\\[6pt]
&= \begin{cases}
2 & \text{if } \eta_0^{M,\varepsilon}(x) = 1 \text{ and } x \geq -M\floor{\varepsilon^{-2/3}};\\
2 & \text{if } \xi^\varepsilon_{-}(x) = 1 \text{ and } x < -M\floor{\varepsilon^{-2/3}}; \\
1 & \text{if } \tilde\eta_0^{M,\varepsilon}(x) = 1, \xi^\varepsilon_{-}(x) = 0, \text{ and } x < -M\floor{\varepsilon^{-2/3}}; \text{ and} \\
-\infty & \text{otherwise}.
\end{cases}\label{e.tilde eta_0 col}
\end{align*}
For this we again define a refined colored initial condition $\eta^{\mrm{col},M,\varepsilon}_0$ by
\begin{align*}
\eta^{\mrm{col},M,\varepsilon}_0(x)
&= \begin{cases}
3 & \text{if } x > -\tfrac{1}{2}M\floor{\varepsilon^{-2/3}} \text{ or } x < -M\floor{\varepsilon^{-2/3}}, \text{ and }  \eta_0^{M, \varepsilon}(x) = 1; \\
2 + \zeta^\varepsilon(x) & \text{if } \tilde\eta_0^{M,\varepsilon}(x) = 1 \text{ and } x \in \intint{-M\floor{\varepsilon^{-2/3}}, -\tfrac{1}{2}M\floor{\varepsilon^{-2/3}}}; \\
2-k & \text{if } x=x_k; \text{ and} \\
-\infty & \text{otherwise},
\end{cases}
\end{align*}
with $x_k$ a labeling from right to left of the color 1 particles in $\tilde\eta_0^{\mrm{col},M,\varepsilon}$ and $\zeta^\varepsilon$ is an \iid Bern($1-\varepsilon^{1/3})$ process.
From this point the proof proceeds in exactly the same way as in the S6V case.
\end{proof}

\subsection{Proofs of Theorem~\ref{t.main kpz fixed point convergence}}
\label{s.general proof}

We start by establishing Theorem~\ref{t.main kpz fixed point convergence} in the case of a single initial condition and time and when the limiting initial condition $\h_0$ is continuous, which we state next for the convenience of the reader. We assume continuity only to be able to apply Proposition~\ref{p.KPZ FP convergence for V shape bernoulli}, which in turn assumed continuity in order to make use of Proposition~\ref{p.KPZ FP convergence via approximation bernoullis}.

\begin{proposition}\label{p.main result in continuous case}
Fix $q\in[0,1)$, $z\in(0,1)$, $\alpha\in(z,z^{-1})$, and $t>0$. Let $\lambda > 0$. Suppose that $h_0^{\varepsilon}:\Z\to \Z$ is a sequence (indexed by $\varepsilon>0$) of initial height functions for S6V satisfying \eqref{e.initial condition assumption} for all $\varepsilon>0$ with that fixed $\lambda$ and is such that  $\h_0^{\varepsilon}\to \h_0$ uniformly on compact sets as $\varepsilon\to 0$, where $\h_0:\R\to\R$ is continuous. Then, 
$$\h^{\mrm{S6V},\varepsilon}(\h_0^{\varepsilon};\bm\cdot, t) \stackrel{d}{\to} \h(\h_0;\bm\cdot, t) \text{ as } \varepsilon\to 0 $$
under the topology of uniform convergence on compact sets. The same holds for the ASEP height function for fixed $t>0$, $q\in[0,1)$, and $\alpha\in(-1,1)$.
\end{proposition}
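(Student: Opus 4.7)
I will prove the S6V case; ASEP follows by the same approach, with the exact monotonicity of Lemma~\ref{l.asep height monotonicity} replacing the approximate Lemma~\ref{l.s6v approximate height monotonicity}. The strategy is to sandwich $\h^{\mrm{S6V},\varepsilon}(\h_0^\varepsilon;\bm\cdot,t)$ between matching lower and upper bounds, each of which converges in distribution to $\h(\h_0;\bm\cdot,t)$ uniformly on compact subsets of $\R$.

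For the \emph{lower bound}, I start from the prelimiting variational inequality of Lemma~\ref{l.prelimiting variation inequality s6v}: with probability $1-o(1)$,
\begin{equation*}
\h^{\mrm{S6V},\varepsilon}(\h_0^\varepsilon; x, t) \geq \sup_{y \in [-\varepsilon^{-1}, \varepsilon^{-1}]}\bigl(\h_0^\varepsilon(y) + \cL^{\mrm{S6V},\varepsilon}(y, 0; x, t)\bigr) - o(1)
\end{equation*}
uniformly in $x$ on any fixed compact. Combining Theorem~\ref{t.s6v airy sheet} (giving $\cL^{\mrm{S6V},\varepsilon}\to\cL$) with $\h_0^\varepsilon\to\h_0$ uniformly on compact sets, and using Lemma~\ref{l.argmax control} to truncate the supremum to a fixed compact $[-M_0,M_0]$ up to vanishing error, the Portmanteau theorem (after a Skorohod coupling) shows that any weak subsequential limit $\g$ of $\h^{\mrm{S6V},\varepsilon}(\h_0^\varepsilon;\bm\cdot,t)$ satisfies $\g(x)\geq \h(\h_0;x,t)$ almost surely on compacts.

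For the \emph{upper bound}, I apply Proposition~\ref{p.KPZ FP convergence for V shape bernoulli} with a slope $\lambda'>\lambda$, obtaining a sequence $M_\varepsilon\to\infty$ along which $\h^{\mrm{S6V},\varepsilon}(\h_0^{M_\varepsilon,\varepsilon};\bm\cdot,t)\stackrel{\smash{d}}{\to}\h(\h_0;\bm\cdot,t)$, where $\h_0^{M,\varepsilon}$ is the V-shape initial condition that coincides with $\h_0^\varepsilon$ on $[-M^\mrm{sc},M^\mrm{sc}]$ and is $\rho_{\varepsilon,\mp\lambda'}$-Bernoulli outside. I then use the approximate height monotonicity of Lemma~\ref{l.s6v approximate height monotonicity} to transfer this convergence into an upper bound on $\h^{\mrm{S6V},\varepsilon}(\h_0^\varepsilon;\bm\cdot,t)$. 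Its input is a pointwise comparison of the two initial conditions on the large interval $[-N,N]$ required by the lemma; using $\h_0^\varepsilon(x)\leq\lambda(1+|x|)$, the positive drift gap $\lambda'-\lambda$ in the V-shape outside $[-M_\varepsilon^\mrm{sc},M_\varepsilon^\mrm{sc}]$, and standard Brownian tail estimates on the rescaled V-shape fluctuations, I would verify that on an event of probability approaching $1$ a shifted V-shape $\h_0^{M_\varepsilon,\varepsilon}+C$ dominates $\h_0^\varepsilon$ pointwise for an appropriate constant $C=C(M_\varepsilon)$. To prevent $C$ from surviving the limit on a fixed compact output window $[-J,J]$, I would use an argmax-localization / finite-speed-of-propagation argument: by Corollaries~\ref{c.agreement of fixed points} and~\ref{c.maximizer location claim}, the rescaled height function on $[-J,J]$ is asymptotically insensitive to the initial condition at distances of order $M_\varepsilon^\mrm{sc}$ from the origin, so choosing $M_\varepsilon^\mrm{sc}$ large within the range permitted by Lemma~\ref{l.s6v approximate height monotonicity} decouples the ``domination failure'' region near $\pm M_\varepsilon^\mrm{sc}$ from the output window.

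The main obstacle is precisely this last step. Since the hypotheses give no lower bound on $\h_0$, the deficit $C(M)=\lambda(1+M^\mrm{sc})-\h_0(M^\mrm{sc})$ that makes a slope-$\lambda'$ V-shape dominate $\h_0^\varepsilon$ near $\pm M^\mrm{sc}$ can grow arbitrarily with $M$; showing this shift is irrelevant for the output on a compact window requires a delicate interplay of the prelimit monotonicity comparison with argmax/finite-speed-of-propagation estimates. Once both bounds match $\h(\h_0;\bm\cdot,t)$ in distribution on compact sets, Lemma~\ref{l.X=Y}, applied pointwise on a countable dense set of $x$ and then extended by continuity, promotes this to weak convergence in the topology of uniform convergence on compact sets; Lemma~\ref{l.joint kpz fixed point convergence} then reduces Theorem~\ref{t.main kpz fixed point convergence} to this marginal statement.
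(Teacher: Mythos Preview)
Your overall sandwich strategy and the lower bound via Lemma~\ref{l.prelimiting variation inequality s6v} match the paper exactly. The gap is in the upper bound, precisely at the obstacle you yourself flag.

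Your plan is to dominate $\h_0^\varepsilon$ by $\h_0^{M_\varepsilon,\varepsilon}+C(M_\varepsilon)$ and then argue via Corollaries~\ref{c.agreement of fixed points} and~\ref{c.maximizer location claim} that the shift $C$ does not affect the output on $[-J,J]$. This does not work: once you invoke approximate monotonicity, you get $\h^{\mrm{S6V},\varepsilon}(\h_0^\varepsilon;\bm\cdot,t)\leq \h^{\mrm{S6V},\varepsilon}(\h_0^{M_\varepsilon,\varepsilon};\bm\cdot,t)+C(M_\varepsilon)+o(1)$, and the shift is global, not a modification at spatial infinity, so argmax localization cannot remove it. More fundamentally, Corollaries~\ref{c.agreement of fixed points} and~\ref{c.maximizer location claim} are statements about the \emph{limiting} objects $\h(\cdot;\cdot,t)$ and the restricted variational expressions $Q_t^{\mrm{S6V},\varepsilon}$; they say nothing about insensitivity of the prelimit $\h^{\mrm{S6V},\varepsilon}(\h_0^\varepsilon;\bm\cdot,t)$ to far-away initial data. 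That insensitivity is essentially equivalent to what you are trying to prove, so invoking it here is circular.

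The paper avoids the shift entirely by inserting a deterministic ``ramp'' between $[-M^{\mrm{sc}},M^{\mrm{sc}}]$ and the Bernoulli tails. Concretely (see \eqref{e.zeta rescaled height function}--\eqref{e.dominating particle configuration}), on $[M^{\mrm{sc}},R^{\mrm{sc}}]$ the dominating rescaled height function is set to $\max(\h_0^\varepsilon(x)-\h_0^\varepsilon(M^{\mrm{sc}}),0)+\lambda'|x|$ (and symmetrically on the left), which by construction lies above $\h_0^\varepsilon$ for every $x$ in that range. The endpoint $R=R(M)$ is chosen so that this ramp has climbed to $\tfrac12\lambda'(1+R^{\mrm{sc}})>\lambda(1+R^{\mrm{sc}})$, after which the $\rho_{\varepsilon,\pm\lambda'}$ Bernoulli tails with drift $\lambda'>2\lambda$ stay above the $\lambda(1+|x|)$ envelope with high probability (Lemma~\ref{l.RW avoiding line}). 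No additive constant is needed, Proposition~\ref{p.KPZ FP convergence for V shape bernoulli} applies to this modified initial condition (with $R$ playing the role of $M$ there), and sending $M_\varepsilon,R_\varepsilon\to\infty$ the limit is still $\h(\h_0;\bm\cdot,t)$ since on any fixed compact the modified initial condition eventually equals $\h_0^\varepsilon$. This is the missing idea in your proposal.
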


The idea of the proof of Proposition~\ref{p.main result in continuous case} is as follows. From Lemma~\ref{l.prelimiting variation inequality s6v} we have a lower bound on $\h^{\mrm{S6V},\varepsilon}(\h_0^\varepsilon; \bm\cdot, t)$ by a sequence of functions which converges to $\h(\h_0;\bm\cdot, t)$, so we only need to prove a similar upper bound. Recall $\h_0^\varepsilon$ grows at most like $\lambda(1+|x|)$ for some $\lambda>0$, and consider a height function $\smash{\h_0^{\shortuparrow,\lambda',M,\varepsilon}}$ which agrees with $\h_0^{\varepsilon}$ on a large interval $[-M^\mrm{sc},M^\mrm{sc}]$, is made to increase above $\h_0^\varepsilon$ on the neighboring intervals $(-R^\mrm{sc},-M^\mrm{sc})$ and $(M^\mrm{sc},R^\mrm{sc})$ for some $R>M$ (and recalling the ``sc'' notation from \eqref{e.Msc}), and then behaves like a random walk with drift $-\lambda'$ on $(-\infty, -R^\mrm{sc})$ and with drift $\lambda'$ on $(R^\mrm{sc},\infty)$ for some $\lambda'>\lambda$. By increasing $\lambda'$, $\smash{\h_0^{\shortuparrow,\lambda',M,\varepsilon}}$, with high probability, is greater than $\h_0^\varepsilon$ on all of $\R$, and Proposition~\ref{p.KPZ FP convergence for V shape bernoulli} applies to it. So by height monotonicity, $\h^{\mrm{S6V},\varepsilon
}(\h_0^\varepsilon;\bm\cdot, t)$ is upper bounded by $\h^{\mrm{S6V},\varepsilon}(\smash{\h_0^{\shortuparrow,\lambda,M,\varepsilon}};\bm\cdot, t)$. By taking $\varepsilon\to0$ and increasing $M$ to $\infty$, the limit of $\h^{\mrm{S6V},\varepsilon}(\smash{\h_0^{\shortuparrow,\lambda',M,\varepsilon}};\bm\cdot, t)$ is $\h(\h_0;\bm\cdot, t)$, which will give the desired upper bound matching the earlier mentioned lower bound.

As is apparent from this discussion, to prove Proposition~\ref{p.main result in continuous case}, we will need a estimate on random walks staying above lines, which is the following. As its proof is a standard argument based on the exponential martingale, we defer it to Appendix~\ref{app.other proofs}. Recall that a Bernoulli random walk with drift $\lambda \in (-1,0)$ means a random walk whose increment distribution is supported on $\{-1,0\}$ with probability $|\lambda|$ given to $-1$ and probability $1-|\lambda|$ to $0$.

\begin{lemma}\label{l.RW avoiding line}
Let $S:\Z_{\geq 0}\to\Z$ be a Bernoulli random walk with drift $\lambda \in (-1,0)$. Fix $\rho\in(-1, \lambda)$. Then, there exists $c = c(\lambda)$ such that, for all $M>0$,
\begin{align*}
\P\Bigl(S(t) \geq \rho t-M \ \forall t\in\Z_{\geq 0}\Bigr) \geq 1- \exp(-c(\lambda-\rho)M).
\end{align*}

\end{lemma}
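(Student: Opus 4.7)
The plan is to use the standard exponential supermartingale (Cramér--Chernoff) method. Define $Y_t := S(t) - \rho t$, so that $Y$ is a random walk starting at $0$ whose iid increments $Z_i := X_i - \rho$ (with $X_i \in \{-1,0\}$ and $\P(X_i = -1) = |\lambda|$) have positive mean $\E[Z_1] = \lambda - \rho > 0$. The event we want to control is $\{\inf_{t \geq 0} Y_t < -M\}$, and the aim is to show it has probability at most $\exp(-c(\lambda-\rho) M)$.

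Introduce the moment generating function
\begin{align*}
\phi(\theta) := \E[e^{-\theta Z_1}] = |\lambda| \hair e^{\theta(1+\rho)} + (1-|\lambda|) \hair e^{\theta \rho}.
\end{align*}
Then $\phi(0) = 1$ and $\phi'(0) = -(\lambda - \rho) < 0$, so $\phi(\theta) < 1$ for sufficiently small $\theta > 0$. The key quantitative input, and the only slightly delicate step, is to find an explicit such $\theta$ whose magnitude is proportional to $\lambda - \rho$. Since $\rho \in (-1, 0)$ one has $|\theta \rho|, |\theta(1+\rho)| \leq \theta$, and so for $\theta \in (0,1]$ the bound $e^x \leq 1 + x + x^2$ (valid for $|x| \leq 1$) gives
\begin{align*}
\phi(\theta) \leq 1 - \theta(\lambda - \rho) + \theta^2.
\end{align*}
Because $\lambda - \rho < \lambda + 1 \leq 1$, choosing $\theta_0 := (\lambda - \rho)/2 \in (0, 1/2]$ yields $\phi(\theta_0) \leq 1 - (\lambda - \rho)^2/4 \leq 1$.

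Therefore $N_t := e^{-\theta_0 Y_t}$ is a nonnegative supermartingale with $N_0 = 1$. Doob's maximal inequality for nonnegative supermartingales then gives
\begin{align*}
\P\Bigl(\inf_{t \geq 0} Y_t \leq -M\Bigr) = \P\Bigl(\sup_{t \geq 0} N_t \geq e^{\theta_0 M}\Bigr) \leq e^{-\theta_0 M} = \exp\bigl(-\tfrac{1}{2}(\lambda - \rho) M\bigr),
\end{align*}
and passing to the complementary event proves the lemma with $c = 1/2$ (independent of $\lambda$, in fact). The only place any subtlety enters is the bound on $\phi(\theta)$; everything else is classical. Since the prelimiting tools used elsewhere in the paper (e.g. stationarity and one-point convergence) only require such a tail bound with some positive constant $c$, no optimization of $c$ is needed.
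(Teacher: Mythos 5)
Your proof is correct. Both you and the paper run the classical exponential martingale (Cramér--Chernoff) method, but the execution differs in two ways worth noting. The paper finds the exact root $z_0 > 0$ of $g(-z_0) = -\rho z_0$ so that $\exp(-z_0(S(t)-\rho t))$ is a true martingale, then applies optional stopping at the first passage time; this requires an auxiliary Taylor-expansion argument to certify that such a $z_0$ exists and satisfies $z_0 \geq c(\lambda)(\lambda-\rho)$. You instead settle for a supermartingale by choosing $\theta_0 = (\lambda-\rho)/2$ explicitly, verify $\phi(\theta_0)\leq 1$ with the elementary bound $e^x \leq 1 + x + x^2$ on $|x|\leq 1$, and conclude via Doob's maximal inequality for nonnegative supermartingales. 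Your route sidesteps the existence-and-lower-bound step for $z_0$ entirely and delivers an explicit, $\lambda$-independent constant $c = 1/2$, which is a (harmless) strengthening of what the lemma asks for. The one small point to police in your computation — that $\lambda - \rho < 1$, so $\theta_0 \leq 1/2 \leq 1$ and the bound $e^x \leq 1+x+x^2$ is legitimately applicable to both $\theta_0(1+\rho)$ and $\theta_0\rho$ — is handled correctly, as is the identification $\sup_t N_t \geq e^{\theta_0 M} \Leftrightarrow \inf_t Y_t \leq -M$ by monotonicity of the exponential.
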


\begin{figure}
\begin{tikzpicture}[scale=1,default/.style={very thick,decorate, decoration={random steps,segment length=3pt,amplitude=1pt}}, use fpu reciprocal]

\draw[red!80!black, default, thick, rounded corners=1pt] (0,0) -- ++(0.2,0.2) -- ++(0.1,-0.1) -- ++(0.1, 0.2) -- ++(0.1, -0.2) -- ++(0.2,0.2) -- ++ (0.1, -0.1) -- ++(0.1,0.2) -- ++(0.2, -0.3) -- ++(0.3, 0.4) -- ++(0.1,-0.1) -- ++(0.1,0.1) -- ++(0.3, -0.2) -- ++(0.1, -0.2) -- ++(0.1, 0.2) -- ++(0.1,0.1) -- ++(0.2, -0.1) -- ++(0.1, 0.1) -- ++(0.1,0) -- ++(0.1,-0.1) -- ++(0.1,0.1) -- ++(0.1, -0.2) -- ++(0.1,-0.2);

\draw[orange, thick] (3,0) -- ++(0.5,0.5);
\draw[orange, thick] (0,0) -- ++(-0.5,0.5);

\draw[red!80!black, default, thick, rounded corners=1.25pt, densely dotted] (0,0) -- ++(-0.2,-0.2) -- ++(-0.1, 0.2) -- ++(-0.2, 0.2) -- ++(-0.2,0) -- ++(-0.1,-0.3) -- ++(-0.1, 0.2) -- ++(-0.1, 0);

\draw[red!80!black, default, thick, rounded corners=1.25pt, densely dotted] (3,0) -- ++(0.2,-0.2) -- ++(0.1, 0.1) -- ++(0.2, -0.2) -- ++(0.2,0) -- ++(0.1,-0.1) -- ++(0.1, 0.2) -- ++(0.1, 0.1);

\bb{80}{0.02}{0.07}{0.5}{1.7}{thick, blue}{46}{3.5} 
\bb{80}{0.02}{0.07}{1.5}{0.5}{thick, blue}{50}{-2.1} 

\node[anchor=north, scale=0.8] at (0,0) {$-M$};
\node[anchor=north, scale=0.8] at (3,0) {$M$};

\end{tikzpicture}
\caption{A depiction of the height functions $\h_0^{\shortuparrow,\lambda,M,\varepsilon}$ and $\h_0^{\varepsilon}$ (red along with dotted red). Note that $\h_0^{\shortuparrow,\lambda,M,\varepsilon}$ dominates $\h_0^\varepsilon$ on the high probability event that the random walks on either side do not dip too low. 
}\label{f.dominating height function for general continuous}
\end{figure}

\begin{proof}[Proof of Proposition~\ref{p.main result in continuous case}]
 We start with the S6V case and address the minor changes for ASEP at the end. We first define the height function which will dominate $\smash{\h_0^\varepsilon}$ and for which we can show convergence to the KPZ fixed point. For $M>0$ and $\lambda'>2\lambda$, let $\zeta^{\lambda', M,\varepsilon}_{+}, \zeta^{\lambda', M,\varepsilon}_{-}:\Z\to\{0,1\}$ be particle configurations such that the height functions $h^{\lambda', M,\varepsilon}_{\zeta,+}, h^{\lambda', M,\varepsilon}_{\zeta,-}:\Z\to\Z$ respectively associated to them with $\smash{h^{\lambda', M,\varepsilon}_{\zeta,\pm}(\pm M)} = 0$ satisfy 
\begin{equation}\label{e.zeta rescaled height function}
\h^{\lambda', M,\varepsilon}_{\zeta, \pm}(x) = \max(\h^\varepsilon_0(x) - \h^\varepsilon_0(\pm M^\mrm{sc}), 0) + \lambda'|x| + \delta_\varepsilon(x)
\end{equation}
for $x\in\R$, where $\delta_\varepsilon(x)\in [0,\varepsilon^{1/3})$ ensures that equality occurs given the constraint that the lefthand side takes values in $\varepsilon^{1/3}\Z$). Let $\xi^{\lambda',\varepsilon}_{\pm}:\Z\to\{0,1\}$ be \iid Bern($\rho_{\varepsilon,\pm\lambda'})$. For the S6V case, let $\eta_0^\varepsilon:\Z\to\{0,1\}$ be the particle configuration associated to $h_0^\varepsilon$ and consider the initial condition $\eta_0^{\shortuparrow, \lambda',M,\varepsilon}:\Z\to\Z$ given by
\begin{align}\label{e.dominating particle configuration}
\eta^{\shortuparrow,\lambda', M,\varepsilon}_0(x) = \begin{cases}
\eta_0^\varepsilon(x) & \text{if } |x| < M\floor{\varepsilon^{-2/3}},\\
\zeta^{\lambda', M,\varepsilon}_{+}(x) & \text{if } x\in \intint{M\floor{\varepsilon^{-2/3}}, R\floor{\varepsilon^{-2/3}}},\\
\zeta^{\lambda', M,\varepsilon}_{-}(x) & \text{if } x\in \intint{-R\floor{\varepsilon^{-2/3}}, -M\floor{\varepsilon^{-2/3}}},\\
\xi^{\lambda',\varepsilon}_{+}(x) & \text{if } x > R\floor{\varepsilon^{-2/3}} \text{, and} \\
\xi^{\lambda',\varepsilon}_{-}(x) & \text{if } x < -R\floor{\varepsilon^{-2/3}},
\end{cases}
\end{align}
where $R = R(M)>M$ will be specified just ahead. Let $h_0^{\shortuparrow,\lambda', M,\varepsilon}:\Z\to\Z$ be the height function associated to $\eta_0^{\shortuparrow,\lambda', M,\varepsilon}$ with $h_0^{\shortuparrow,\lambda', M,\varepsilon}(0) = h_0^\varepsilon(0)$ (see Figure~\ref{f.dominating height function for general continuous}). In particular, $h_0^{\shortuparrow,\lambda', M,\varepsilon}(x) = h_0^\varepsilon(x)$ for all $|x|\leq M\floor{\varepsilon^{-2/3}}$ and $h_0^{\shortuparrow,\lambda', M,\varepsilon}(x) \geq h_0^\varepsilon(x)$ for $x\in \intint{-R\floor{\varepsilon^{-2/3}}, -M\floor{\varepsilon^{-2/3}}}$. We choose $R>M$ to be the smallest real number such that, for all $\varepsilon>0$, 
$$\h_0^{\shortuparrow,\lambda', M,\varepsilon}(-R^\mrm{sc}) > \tfrac{1}{2}\lambda'(1+R^\mrm{sc}),$$
where recall the righthand side is an upper bound on $\h_0(-R^\mrm{sc})$ since $\lambda'>2\lambda$. Such an $R$ exists and is finite since $\h_0^{\shortuparrow,\lambda',M}(-M^\mrm{sc}) = \h_0(-M^\mrm{sc}) > -\infty$ by the continuity of $\h_0$. We adopt this condition to ensure that $\h^{\shortuparrow,\lambda',M,\varepsilon}_0$ continuing with slope $\lambda'$ is sufficient to stay above $\h_0^\varepsilon$ on $\R$ (note that if $\h_0^{\varepsilon}(x)$ is far below $\lambda(1+|x|)$ it could have an arbitrarily large slope for a short duration).

By Lemma~\ref{l.prelimiting variation inequality s6v}, we know that for any $M>0$ and all $\varepsilon>0$ small enough, with probability at least $1-C\exp(-c\varepsilon^{-1/6})$, for all $x\in [-M^\mrm{sc}, M^\mrm{sc}]$,
\begin{align}\label{e.general initial condition lower bound}
\h^{\mrm{S6V},\varepsilon}(\h_0^\varepsilon; x,t) \geq \sup_{y\in[-M,M]}\left(\h_0^\varepsilon(y) + \cL^{\mrm{S6V},\varepsilon}(y,0;x,t)\right) - \sigma(\alpha)^{-1}\varepsilon^{1/6},
\end{align}
where $\h^{\mrm{S6V},\varepsilon}(\h_0; \bm\cdot,t)$ and $\cL^{\mrm{S6V},\varepsilon}(\bm\cdot,0; \bm\cdot,t)$ are coupled via the basic coupling. By Lemma~\ref{l.RW avoiding line} it also holds that, with probability at least $1-\exp(-c\lambda')$, $h_0^\varepsilon(x) \leq h_0^{\shortuparrow,\lambda', M,\varepsilon}(x)$ for all $x\in\Z$, $M>0$, and $\varepsilon>0$ (the probability is to ensure that the random walks on either side stay above $h_0^\varepsilon$). So by approximate height monotonicity (Lemma~\ref{l.s6v approximate height monotonicity}), with probability at least $1-\exp(-c\varepsilon^{-1/6})-\exp(-c\lambda')$, for all $x\in[-M^\mrm{sc},M^\mrm{sc}]$ it holds that
\begin{align}\label{e.general initial condition upper bound}
\h^{\mrm{S6V},\varepsilon}(\h_0^\varepsilon; x,t) \leq \h^{\mrm{S6V},\varepsilon}(\h_0^{\shortuparrow,\lambda', M,\varepsilon}; x,t) + \varepsilon^{1/6},
\end{align}
where the two sides are coupled via the basic coupling. By Proposition~\ref{p.KPZ FP convergence for V shape bernoulli}, for each $\lambda'$ there exists a sequence $R_\varepsilon\to\infty$ as $\varepsilon\to0$ (and thus also a sequence $M_\varepsilon\to\infty$ as $\varepsilon\to 0$) such that the righthand side of \eqref{e.general initial condition upper bound} (with $M$ and the implicit $R$ replaced by $M_\varepsilon$ and $R_\varepsilon$) converges weakly to $\h(\h_0;\bm\cdot, t)$ in the topology of uniform convergence on compact sets as $\varepsilon\to 0$. By Theorem~\ref{t.s6v airy sheet} and Lemma~\ref{l.argmax control}, the righthand side of \eqref{e.general initial condition lower bound} converges weakly in the same topology to $\h(\h_0;\bm\cdot,t)$ as well. Thus, by the ordering provided by \eqref{e.general initial condition lower bound} and \eqref{e.general initial condition upper bound} under the basic coupling and  Lemma~\ref{l.joint kpz fixed point convergence},
\begin{align}\label{e.general joint convergence}
  \left(\h^{\mrm{S6V},\varepsilon}(\h_0^{\shortuparrow,\lambda', M_\varepsilon,\varepsilon}; \bm\cdot,t), \sup_{y\in[-M^\mrm{sc}_\varepsilon,M^\mrm{sc}_\varepsilon]}\left(\h_0^\varepsilon(y) + \cL^{\mrm{S6V},\varepsilon}(y,0;\bm\cdot,t)\right)\right) \stackrel{d}{\to} (\h(\h_0;\bm\cdot, t), \h(\h_0;\bm\cdot, t))
\end{align}    
(i.e., the two coordinates are identically equal) as $\varepsilon\to0$ for each $\lambda'$; thus the same holds as $\varepsilon\to 0$ and then $\lambda'\to\infty$.

By relabeling $M_\varepsilon$ if necessary, there exists a sequence $\lambda'_\varepsilon$ such that, if we replace $\lambda'$ by $\lambda'_\varepsilon$ in \eqref{e.general joint convergence}, then taking $\varepsilon\to 0$ in \eqref{e.general joint convergence} yields joint convergence to $(\h(\h_0;\bm\cdot,t), \h(\h_0;\bm\cdot,t))$. We may assume this convergence happens almost surely on a common probability space which also supports $\h^{\mrm{S6V},\varepsilon}(\h_0^\varepsilon; \bm\cdot,t)$ and on which \eqref{e.general initial condition lower bound} and \eqref{e.general initial condition upper bound} both hold by the Skorohod representation theorem; in particular, $\h^{\mrm{S6V},\varepsilon}(\h_0^\varepsilon;\bm\cdot, t)$ is sandwiched between $\h^{\mrm{S6V},\varepsilon}(\h_0^{\shortuparrow,\lambda'_\varepsilon, M_\varepsilon,\varepsilon}; \bm\cdot,t)$ and  $\sup_{y\in[-M^\mrm{sc}_\varepsilon,M^\mrm{sc}_\varepsilon]}(\h_0^\varepsilon(y) + \cL^{\mrm{S6V},\varepsilon}(y,0;\bm\cdot,t))$. Then taking $\varepsilon\to0$ yields that $\h^{\mrm{S6V},\varepsilon}(\h_0^\varepsilon; \bm\cdot,t)$ also converges almost surely to $\h(\h_0;\bm\cdot, t)$, which completes the proof for S6V.

For the ASEP case, the proof proceeds in exactly the same way with a slightly different definition of $\eta^{\shortuparrow,\lambda', M,\varepsilon}_0$ as compared to \eqref{e.dominating particle configuration}, namely with $\zeta^{\lambda', M,\varepsilon}_{+}(x)$ and $\zeta^{\lambda', M,\varepsilon}_{-}(x)$ switched, and $\xi^{\lambda', M,\varepsilon}_{+}(x)$ and $\xi^{\lambda', M,\varepsilon}_{-}(x)$ switched, i.e., 
\begin{equation*}
\eta^{\shortuparrow,\lambda', M,\varepsilon}_0(x) = \begin{cases}
\eta_0^\varepsilon(x) & \text{if } |x| < M\floor{\varepsilon^{-2/3}},\\
\zeta^{\lambda', M,\varepsilon}_{+}(x) & \text{if } x\in \intint{-R\floor{\varepsilon^{-2/3}}, -M\floor{\varepsilon^{-2/3}}},\\
\zeta^{\lambda', M,\varepsilon}_{-}(x) & \text{if } x\in \intint{M\floor{\varepsilon^{-2/3}}, R\floor{\varepsilon^{-2/3}}},\\
\xi^{\lambda',\varepsilon}_{+}(x) & \text{if } x < -R\floor{\varepsilon^{-2/3}} \text{, and} \\
\xi^{\lambda',\varepsilon}_{-}(x) & \text{if } x > R\floor{\varepsilon^{-2/3}}.
\end{cases}
\end{equation*}
The change is to ensure that $\h_0^{\shortuparrow,\lambda', M,\varepsilon}$ dominates $\h_0^\varepsilon$, since the rescaling as given in Definition~\ref{d.rescaled ASEP height function} has an extra negative sign as compared to its S6V analog Definition~\ref{d.rescaled S6V height function}.
\end{proof}

\noindent With these preliminaries we give the proof of Theorem~\ref{t.main kpz fixed point convergence}. In other words, we upgrade Proposition~\ref{p.main result in continuous case} to the case that $\h_0:\R\to\R\cup\{-\infty\}$ is upper semi-continuous.

\begin{proof}[Proof of Theorem~\ref{t.main kpz fixed point convergence}]
We give the proof for S6V, as the proof for ASEP is identical. By Lemma~\ref{l.joint kpz fixed point convergence}, it suffices to prove convergence to the KPZ fixed point for a single initial condition and single time. We assume without loss of generality that $s_1=0$, i.e., the initial condition is started at time $0$. By Proposition~\ref{p.main result in continuous case}, we have the latter result in the case that $\h_0:\R\to\R$ is continuous.

So we need only consider the case that $\h_0:\R\to\R\cup\{-\infty\}$ is in $\mrm{UC}$ and $\h_0^\varepsilon\to \h_0$ locally in $\mrm{UC}$ as $\varepsilon\to 0$. We still have \eqref{e.prelimiting inequality} that 
\begin{equation}\label{e.UC proof lower bound}
\h^{\mrm{S6V},\varepsilon}(\h_0^\varepsilon; x,t) \geq \sup_{y\in[-M,M]} \left(\h_0^{\varepsilon}(y) + \S^{\mrm{S6V},\varepsilon}(y;x)\right) - o_\varepsilon(1)
\end{equation}
with probability at least $1-o_\varepsilon(1)$ for all $\varepsilon>0$, $x\in[-\varepsilon^{-1}, \varepsilon^{-1}]$, and $M>0$. Since $\h_0^\varepsilon\to\h_0$ locally in UC, it follows that the righthand side of \eqref{e.UC proof lower bound} converges (as $\varepsilon\to 0$) to $\sup_{y\in[-M,M]} (\h_0(y) + \S(y;x))$, uniformly over $x\in[-J,J]$ for any fixed $J$. Further, as $M\to\infty$, $\sup_{y\in[-M,M]} (\h_0(y) + \S(y;x)) \to \sup_{y\in\R} (\h_0(y) + \S(y;x))$ in the topology of uniform convergence on compact sets (using Lemma~\ref{l.argmax control}). Thus, there exists a sequence $M_\varepsilon\to\infty$ such that 
$$\sup_{y\in[-M_\varepsilon, M_\varepsilon]} (\h_0^{\varepsilon}(y) + \S^{\mrm{S6V},\varepsilon}(y;x))  \to \sup_{y\in\R} (\h_0(y) + \S(y;x))$$
in distribution in the topology of uniform convergence on compact sets and for all $\varepsilon>0$, with probability at least $1-o_\varepsilon(1)$  and for all $x\in\R$,
\begin{align}\label{e.variational lower bound for uc}
\h^{\mrm{S6V},\varepsilon}(\h_0^\varepsilon; x,t) \geq \sup_{y\in[-M_\varepsilon, M_\varepsilon]} (\h_0^{\varepsilon}(y) + \S^{\mrm{S6V},\varepsilon}(y;x)) -o_\varepsilon(1).
\end{align}

Further, the following is a consequence of the fact that upper semi-continuous functions can be approximated from above by continuous functions, the continuity of the KPZ fixed point in $\mrm{UC}$ \cite[Theorem 4.1]{matetski2016kpz} (this also follows from the variational definition \eqref{e.fixed point via landscape} given the localization of the maximizer from Lemma~\ref{l.argmax control}), the fact that the topology on UC induces the topology of uniform convergence on compact sets on the space of continuous functions, and the continuity of $\h(\g; \bm\cdot, t)$ for any $\g\in\mrm{UC}$: for any $\delta>0$ there exists $\tilde\h_{0,\delta}: \R\to\R$ continuous such that $\tilde\h_{0,\delta}(y)\geq \h_0(y)$ for all $y\in\R$ and 
\begin{align*}
\sum_{\ell=1}^{\infty}2^{-\ell}\min\left(1, \sup_{y\in[-\ell,\ell]}\bigl|\h(\tilde\h_{0,\delta}; y, t) - \h(\h_{0}; y,t)\bigr|\right) \leq \delta.
\end{align*}

Next, for each $\delta>0$ there exists a sequence $\tilde h_{0,\delta}^\varepsilon:\Z\to\Z$ indexed by $\varepsilon>0$ such that $\tilde h_{0,\delta}^\varepsilon(x) \geq h_{0}^\varepsilon(x)$ for all $\varepsilon>0$ and $x\in\Z$ and $\smash{\tilde \h^\varepsilon_{0,\delta}} \to \smash{\tilde\h_{0,\delta}}$ as $\varepsilon\to0$ uniformly on compact sets. Thus it follows from the above and Proposition~\ref{p.main result in continuous case} that 
$$\lim_{\delta\to 0}\lim_{\varepsilon\to 0} \h^{\mrm{S6V},\varepsilon}(\tilde \h^{\varepsilon}_{0,\delta}; \bm\cdot ,t) = \h(\h_0; \bm\cdot ,t)$$
where the limits are in distribution in the topology of uniform convergence on compact sets. In particular, there exists a sequence $\{\delta_\varepsilon\}_{\varepsilon>0}$ converging to $0$ as $\varepsilon\to 0$ such that $\lim_{\varepsilon\to 0} \h^{\mrm{S6V},\varepsilon}(\tilde \h^{\varepsilon}_{0,\delta_\varepsilon}; \bm\cdot ,t) = \h(\h_0; \bm\cdot ,t)$ weakly in the same topology. Thus by height monotonicity (Lemma~\ref{l.asep height monotonicity}) and \eqref{e.variational lower bound for uc}, it follows that for all $\varepsilon>0$, with probability at least $1-o_\varepsilon(1)$,
\begin{align*}
\h^{\mrm{S6V},\varepsilon}(\tilde \h_{0,\delta_\varepsilon}^\varepsilon; \bm\cdot, t) \geq \h^{\mrm{S6V},\varepsilon}(\h_{0}^\varepsilon; \bm\cdot, t) \geq \sup_{y\in[-M_{\varepsilon},M_{\varepsilon}]} (\h_0^{\varepsilon}(y) + \S^{\mrm{S6V},\varepsilon}(y;x)) - o_\varepsilon(1)
\end{align*}
where the three processes are coupled via the basic coupling for each $\varepsilon>0$. By the Skorohod representation theorem, we can find a coupling of these processes across $\varepsilon>0$ such that both the leftmost and rightmost sides of the previous display converge (as $\varepsilon\to 0$) uniformly on compact sets to $\h(\h_0; \bm\cdot, t)$ with probability 1 (using Lemma~\ref{l.argmax control} to ensure the uniformity of the convergence of the righthand side). Thus the same is true for $\h^{\mrm{S6V},\varepsilon}(\h_{0}^\varepsilon; \bm\cdot, t)$, completing the proof for S6V. 

The proof for ASEP is identical except for the simplification that $\h^{\mrm{ASEP},\varepsilon}(\h_0^\varepsilon; x,t)$ is almost surely lower bounded by $\sup_{y\in[-M,M]} (\h_0^{\varepsilon}(y) + \S^{\mrm{ASEP},\varepsilon}(y;x))$ and that the sequence $\tilde h_{0,\delta}^\varepsilon$ should be taken to be \emph{upper} bounded by $h_0^\varepsilon$ to take into account the sign change on going to $\tilde\h_{0,\delta}^\varepsilon$ according to Definition~\ref{d.rescaled ASEP height function}.
\end{proof}

\appendix

\section{}\label{app.other proofs}

Here we prove various statements from the main body whose proofs were deferred. Theorem~\ref{t.s6v airy sheet} is proved in Section~\ref{s.s6v landscape proof}; Lemmas~\ref{l.prelimiting variation inequality asep}, \ref{l.prelimiting variation inequality s6v}, and \ref{l.joint kpz fixed point convergence} are proved in Section~\ref{s.joint from marginal proofs}; Lemma~\ref{l.s6v approximate height monotonicity} on approximate height monotonicity in Section~\ref{s.approximate monotonicity proof}; Lemma~\ref{l.argmax control} in Section~\ref{s.fixed point maximizer proof}; Lemma~\ref{l.finite speed of propagation} in Section~\ref{s.finite speed of prop}; and Lemma~\ref{l.RW avoiding line} in Section~\ref{s.random walk proof}.

\subsection{S6V landscape convergence}\label{s.s6v landscape proof}

\begin{proof}[Proof of Theorem~\ref{t.s6v airy sheet}]
 Let $N\in\N$ be a constant. \cite{aggarwal2024scaling} works with initial conditions with finitely many arrows, all incident inside $\intint{-N,N}$. In our notation, the initial conditions considered in that paper are $\tilde h^{\mrm{step}}_{0,y}$, given by $z\mapsto (N-y+1)\one_{z\leq y-1} + (N-z)\one_{z\in\intint{y, N}}$ for each $y\in\intint{-N,N}$, i.e., $\tilde h^{\mrm{step}}_{0,y}(z)$ is the number of arrows strictly above $(1,z)$ when there are arrows horizontally incident at each vertex in $\{(1,k):k\in\intint{y,N}\}$. Then, if we denote the object defined in \cite[eq. (2.17)]{aggarwal2024scaling} by $\tilde h_N^{\mrm{S6V}}$, it can be written in our notation as $\tilde h^{\mrm{S6V}}_N(y,0;x,t) = h^{\mrm{S6V}}(\tilde h^{\mrm{step}}_{0,y}; x,t)$ (recall \eqref{e.s6v colored height function}). 

Note that for $z\in\intint{-N,N}$, $h^{\mrm{step}}_{0,y}(z) = \tilde h^{\mrm{step}}_{0,y}(z) - (N-y+1)$.
Constant height changes are preserved by the dynamics, so, by a finite speed of discrepancy estimate (see, e.g., Lemma~\ref{l.finite speed of discrepancy for s6v basic coupling} ahead), $h^{\mrm{S6V}}$ and $\tilde h_N^{\mrm{S6V}}$ are related by
\begin{align}\label{e.relation between height function definitions}
h^{\mrm{S6V}}(y,0;x,t) = \tilde h_N^{\mrm{S6V}}(y,0;x,t) - N +y -1
\end{align}
on an event with probability tending to $1$ as $N\to\infty$; the equality is not almost sure due to the discrepancy in the initial conditions associated to the systems outside of $\intint{-N,N}$. 

Taking this relation into account, if we call the object defined in \cite[eq. (2.20)]{aggarwal2024scaling} as $\tilde \S^{\mrm{S6V},\varepsilon}$ (with $N$ set to be at least $2\alpha\varepsilon^{-1}$), the definition \eqref{e.rescaled s6v definition} of $\S^{\mrm{S6V}, \varepsilon}$ and $\tilde \S^{\mrm{S6V},\varepsilon}$ agree up to a term of $\sigma(\alpha)^{-1}\varepsilon^{1/3}$ on an event whose probability tends to $1$ as $\varepsilon\to 0$ (the discrepancy arising due to the ignoring of the $-1$ term in \eqref{e.relation between height function definitions} in \cite{aggarwal2024scaling}, as it disappears in the $\varepsilon\to 0$ limit). 
This in turn shows that Theorem~\ref{t.s6v airy sheet} follows from \cite[Theorem 2.20]{aggarwal2024scaling} combined with the standard fact that if random elements $\smash{X_n \to X}$ in distribution as $n\to\infty$ in a Polish space $(\mc X, d)$ and $d(X_n,Y_n) \to 0$ in probability, then it follows that $Y_n \to X$ in distribution as $n\to\infty$ as well.
\end{proof}

\subsection{Joint convergence from marginal convergence} \label{s.joint from marginal proofs}

We will first prove Lemmas~\ref{l.prelimiting variation inequality asep} and \ref{l.prelimiting variation inequality s6v} on the variational inequality for ASEP and S6V before turning to the proof of Lemmas~\ref{l.joint kpz fixed point convergence} on upgrading marginal convergence to KPZ fixed points to joint convergence, as the latter will use the former. The proofs are essentially the same as that of \cite[Corollary 2.12 (3)]{aggarwal2024scaling}.

\begin{proof}[Proofs of Lemmas~\ref{l.prelimiting variation inequality asep} and \ref{l.prelimiting variation inequality s6v}]
We give the proof for S6V and indicate the minor simplifications for ASEP at the end. We first claim that, almost surely, for every $x,y\in\Z$,
\begin{align}\label{e.s6v easy inequality fixed point}
 h^{\varepsilon}_0(x) \geq h^{\varepsilon}_0(y) + h^{\mrm{S6V}}(y, 0; x, 0).
\end{align}
First, this holds for $x=y$ since $h^{\mrm{S6V}}(y, 0; y, 0) = 0$ (recall the definition from \eqref{e.s6v colored height function}). Observe that both sides are non-increasing functions of $x$ and decrease by at most $1$ on going from $x$ to $x+1$. Since the righthand side decreases by exactly 1 on going from $x$ to $x+1$ when $x\geq y$, \eqref{e.s6v easy inequality fixed point} is proven for $x\geq y$. Since the righthand side stays constant on going from $x$ to $x-1$ when $x\leq y$ while the lefthand side increases by $0$ or $1$, \eqref{e.s6v easy inequality fixed point} is established.

We now view \eqref{e.s6v easy inequality fixed point} as an ordering of initial conditions: for each $y$ fixed, we regard $h^{\varepsilon}_0(y)$ as a constant and both $h^{\mrm{S6V}}(y, 0; \bm\cdot, 0)$ and $\smash{h^{\varepsilon}_0(\bm\cdot)}$ as initial conditions. Evolving both for time $\floor{t\varepsilon^{-1}}$ under the basic coupling, Lemma~\ref{l.s6v approximate height monotonicity} (with $N=\varepsilon^{-2}$ and $M=\varepsilon^{-1/6}$) and a union bound over $y\in \intint{-\varepsilon^{-2}, \varepsilon^{-2}}$ guarantees that, with probability at least $1-C\exp(-c\varepsilon^{-1/6})$, for all $x,y\in\intint{-\varepsilon^{-2}, \varepsilon^{-2}}$,
\begin{align}\label{e.inequality after monotone evolution}
h^{\mrm{S6V}}(h_0^{\varepsilon}; x, \floor{t\varepsilon^{-1}}) \geq h^{\varepsilon}_0(y) + h^{\mrm{S6V}}(y, 0; x, \floor{t\varepsilon^{-1}}) - \varepsilon^{-1/6}.
\end{align}
Taking the maximum over $y\in\intint{-\varepsilon^{-2}, \varepsilon^{-2}}$ and writing in terms of the rescaled quantities (as defined in Definition~\ref{d.rescaled S6V height function}) yields that, with probability at least $1-C\exp(-\varepsilon^{-1/6})$, for all $x\in\intint{-\varepsilon^{-1},\varepsilon^{-1}}$,
\begin{align}\label{e.rescaled inequality}
\h^{\mrm{S6V},\varepsilon}(\h_0^\varepsilon; x,t) \geq \sup_{y\in [-\varepsilon^{-1}, \varepsilon^{-1}]}\left(\h_0^\varepsilon(y) + \cL^{\mrm{S6V},\varepsilon}(y, 0; x,t)\right) - \sigma(\alpha)^{-1}\varepsilon^{1/6}.
\end{align}
This completes the proof for S6V, i.e, Lemma~\ref{l.prelimiting variation inequality s6v}. For ASEP, i.e., Lemma~\ref{l.prelimiting variation inequality asep}, a very similar argument holds modulo a few small changes: (i) the analog of \eqref{e.s6v easy inequality fixed point} holds with the inequality reversed since, from \eqref{e.asep unscaled sheet}, $h^{\mrm{ASEP}}(y,0; x,0) = (y-x)\one_{x\leq y}$ instead of $h^{\mrm{S6V}}(y,0; x,0) = (y-x)\one_{x\geq y}$; (ii) the analog of the reversed version of \eqref{e.inequality after monotone evolution} holds for all $x,y\in\Z$ with probability 1 without the $-\varepsilon^{-1/6}$ term since Lemma~\ref{l.asep height monotonicity} guarantees exact height monotonicity; and (iii) \eqref{e.rescaled inequality} holds with the supremum over $y$ taken over $\R$ and for all $x\in\R$ on a probability $1$ event (this uses that there is an extra sign change in going from \eqref{e.inequality after monotone evolution} to \eqref{e.rescaled inequality} by Definition~\ref{d.rescaled ASEP height function}).
\end{proof}

\begin{proof}[Proof of Lemma~\ref{l.joint kpz fixed point convergence}]
We give the proof in the case of S6V as it is the same for ASEP. It suffices to prove that, for every finite $\mc U\subseteq\mc T$,
\begin{equation}\label{e.kpz fixed point fdd to show}
\begin{split}
\MoveEqLeft[16]
\left(\mf h^{\mrm{S6V}, \varepsilon}(\h^{(i), \varepsilon}_0, s_i; \bm\cdot, t), \mc L^{\mrm{S6V},\varepsilon}(\bm\cdot, s_i; \bm\cdot, t) : i\in\intint{1,k}, t\in\mc U\right)\\
&\stackrel{d}{\to}
 \left(\mf h(\mf h^{(i)}_0, s_i; \bm\cdot, t), \mc L(\bm\cdot, s_i; \bm\cdot, t) : i\in\intint{1,k}, t\in\mc U\right)
\end{split}
\end{equation}
in the topology of uniform convergence on compact sets.

First, $(\h^{\mrm{S6V},\varepsilon}(\h^{(i),\varepsilon}_0, s_i; \bm\cdot, t):i\in\intint{1,k}, t\in\mc U)$ is a tight sequence in $\varepsilon$, and for each $i\in\intint{1,k}$ and $t\in\mc U$, we have the marginal convergence of $\h^{\mrm{S6V}, \varepsilon}(\smash{\h^{(i), \varepsilon}_0}, s_i; \bm\cdot, t)$ to $\h(\mf h_0^{(i)}, s_i; \bm\cdot, t)$; the former follows from the latter, which we have assumed in the case $t=1$ and follows in the general case by rescaling. We also know from Theorem~\ref{t.s6v airy sheet} that, in the topology of uniform convergence on compact sets, for any $t\in\mc U$,
\begin{align*}
\mc L^{\mrm{S6V},\varepsilon}(\bm\cdot, s_i; \bm\cdot, t)
&\stackrel{d}{\to}
\mc L(\bm\cdot, s_i; \bm\cdot, t).
\end{align*}

Let $(\tilde \h(\h^{(i)}_0, s_i; \bm\cdot, t), \mc L(\bm\cdot, s_i; \bm\cdot, t) : i\in\intint{1,k}, t\in\mc U)$ be any subsequential limit point of the lefthand side of \eqref{e.kpz fixed point fdd to show}, so that the $\mc L$ terms are coupled as the directed landscape and each $\smash{\tilde \h(\h^{(i)}_0, s_i; \bm\cdot, t)}$ is marginally distributed as the KPZ fixed point $\smash{\h(\h^{(i)}_0, s_i; \bm\cdot, t)}$ for each fixed $i\in\intint{1,k}$ and $t\in\mc U$.
Now, \eqref{e.kpz fixed point fdd to show} is reduced to showing that, for any $i\in\intint{1,k}$ fixed, almost surely, for all $x\in\R^2$,
\begin{align}\label{e.variational to show fixed point}
\tilde \h(\h^{(i)}_0, s_i; x,t) = \sup_{y\in\R}\left(\h^{(i)}_0(y) + \mc L(y,s_i; x,t)\right).
\end{align}
By Lemma~\ref{l.prelimiting variation inequality s6v} and a union bound, with probability at least $1-Ck|\mc U|\exp(-c\varepsilon^{-1/6})$ under the basic coupling, for all $x\in [-\varepsilon^{-1}, \varepsilon^{-1}]$ and $t\in\mc U\cap \{s:s>s_i\}$,
\begin{align*}
\h^{\mrm{S6V}, \varepsilon}(\h^{(i), \varepsilon}_0, s_i; x, t) \geq \sup_{y\in[-\varepsilon^{-1}, \varepsilon^{-1}]}\left(\h_0^{\varepsilon,(i)}(y) + \mc L^{\mrm{S6V},\varepsilon}(y, s_i; x, t)\right).
\end{align*}

In particular, we may also take the supremum over $y\in [-M,M]$. Taking $\varepsilon\to 0$ and then $M\to\infty$, the previous display along with Lemma~\ref{l.argmax control} yields that, almost surely, for all $y\in\R$, $t\in \mc U\cap \{s:s>s_i\}$, and $i\in\intint{1,k}$,
\begin{align}\label{e.subsequential inequality fixed point}
\tilde \h(\h^{(i)}_0, s_i; x,t) \geq \sup_{y\in\R}\left(\h^{(i)}_0(y) + \cL(y,s_i; x,t)\right).
\end{align}
However, we know that both sides of this inequality are marginally distributed as $\h(\h^{(i)}_0, s_i; x, t)$ (for the righthand side, using \eqref{e.fixed point via landscape}) for every fixed $x$.
Thus, Lemma~\ref{l.X=Y} implies that the two sides are equal on a probability 1 event for all $x\in\Q$ and $i\in\intint{1,k}$; since both sides are continuous in $x$, this yields equality for all $x\in\R$, $t\in\mc U\cap \{s:s>s_i\}$, and $i\in\intint{1,k}$, completing the proof for S6V. For ASEP one simply replaces the invocation of Lemma~\ref{l.prelimiting variation inequality s6v} with Lemma~\ref{l.prelimiting variation inequality asep}.
\end{proof}

\subsection{Approximate height monotonicity}\label{s.approximate monotonicity proof}

Here we prove Lemma~\ref{l.s6v approximate height monotonicity}. As mentioned, it is a quick consequence of a form of approximate height monotonicity  for finite systems and a finite speed of discrepancy estimate. We recall precise statements from \cite{aggarwal2024scaling} next, before turning to the proof of Lemma~\ref{l.s6v approximate height monotonicity}.

\begin{lemma}[Approximate height monotonicity of S6V, {\cite[Proposition D.3]{aggarwal2024scaling}}]\label{l.s6v approximate height monotonicity finite system}
Let $q\in[0,1)$ and $b^{\shortrightarrow} \in (0,1)$. Fix $N\in\N$. Let $H\in\Z$ and let $\smash{h_0^{(1)}, h_0^{(2)}}:\intray{-N-1}\to\Z_{\geq 0}$ be two height functions such that $\smash{h_0^{(1)}(x)} +H \geq \smash{h_0^{(2)}(x)}$ for all $x\in\intray{-N-1}$, and, for $i=1,2$, $\smash{h_0^{(i)}}(-N-1)\leq N$ and $\smash{h_0^{(i)}}(x) = 0$ for all large $x$. There exist positive constants $C, c>0$, depending on only $q$ and $b^{\shortrightarrow}$, such that, for $t\in\N$ and under the basic coupling, the following holds. For any $M\geq (\log N)^2$, with probability at least $1-C\exp(-cM)$, $\hssv(h_0^{(1)}; x,t) +H \geq \hssv(h_0^{(2)}; x,t) - M$ for all $x\in\intray{-N-1}$.
\end{lemma}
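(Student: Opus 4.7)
The plan is to encode the comparison between the two height functions via a colored stochastic six-vertex system coupled to the basic coupling of the two evolutions, and then bound the ``priority inversion'' events that would cause a failure of approximate monotonicity using geometric tail estimates from colored S6V. Since each $h_0^{(i)}$ encodes at most $N$ particles (as $h_0^{(i)}(-N-1) \leq N$ and $h_0^{(i)}$ eventually vanishes), label sys-$i$'s particles by rank from the right as $p_1^{(i)} > p_2^{(i)} > \cdots$. The hypothesis $h_0^{(1)} + H \geq h_0^{(2)}$ on $\intray{-N-1}$ is equivalent to the rank-shifted ordering $p_k^{(2)} \leq p_{k-H}^{(1)}$ for every valid $k$, so pairing sys-$2$ rank $k$ with sys-$1$ rank $k-H$ provides an initial pairwise ordering with sys-$2$ weakly to the left.

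A failure of the conclusion at some position $x$---that is, $\hssv(h_0^{(1)}; x, t) + H < \hssv(h_0^{(2)}; x, t) - M$---forces at least $M$ such pairs to have sys-$2$ strictly overtake sys-$1$ by time $t$. I would embed the pair $(\eta_t^{(1)}, \eta_t^{(2)})$ evolved under basic coupling into a colored S6V system with at most $2N$ colors, assigning each sys-$1$ particle a higher color (hence higher priority) than its paired sys-$2$ counterpart; the uncolored $\eta^{(i)}$ dynamics are recovered from the colored evolution by merging the relevant intervals of colors. In this embedding, the number of sys-$2$ (lower-priority) particles overtaking any fixed sys-$1$ (higher-priority) particle is stochastically dominated by a $\Geo(q)$ random variable via Lemma \ref{l.overtaking} applied after suitable color-merging, yielding a per-pair overtaking tail bound of $q^\ell$ at scale $\ell$.

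A union bound over the at most $N$ particle pairs and the $O(N)$ relevant positions $x \in \intray{-N-1}$ (with the support being effectively finite by the finite-propagation estimate of Lemma \ref{l.finite speed of propagation}) yields a total failure probability of at most $C N^2 q^M \leq C \exp(2 \log N - cM)$, where $c = -\log q > 0$. Under the hypothesis $M \geq (\log N)^2$, the polynomial factor is absorbed into the exponent, giving the claimed probability bound $1 - C \exp(-c'M)$ for some $c' > 0$. The main obstacle is that Lemma \ref{l.overtaking} is stated for a system with a single color-$1$ particle, whereas here one needs to control many pairs simultaneously; this is resolved either by a sequential exposure argument (freezing all but one color-$1$ particle at a time and applying the lemma conditionally) or by directly extending the coupling in \cite[Corollary 3.2]{drillick2024stochastic} to handle multiple target particles jointly, which is primarily a bookkeeping task but must be executed carefully to ensure that the geometric tails remain uncorrelated across pairs in the relevant sense.
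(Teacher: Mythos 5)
The reductions you set up before the probabilistic step are fine: the rank-shifted pairing is indeed equivalent to the height ordering, a violation $\hssv(h_0^{(1)};x,t)+H<\hssv(h_0^{(2)};x,t)-M$ does force a single sys-2 particle to sit to the right of at least $M$ of the sys-1 particles it started weakly to the left of, and the arithmetic $M\geq(\log N)^2$ does absorb polynomial-in-$N$ union bounds. The gap is the core step: the basic-coupled pair $(\eta^{(1)}_t,\eta^{(2)}_t)$ cannot be embedded into a single colored S6V system. The colored model reproduces the basic coupling only when the particle configurations are nested, i.e.\ pointwise ordered (Remark~\ref{r.colroed to basic coupling}); here the hypothesis orders the \emph{height functions} up to the shift $H$, which does not order the particle configurations---if it did, monotonicity would be exact, as for ASEP (Lemma~\ref{l.asep height monotonicity}), and there would be nothing to prove. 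In a colored system, particles of different colors block one another through the priority rule, whereas under the basic coupling the sys-1 and sys-2 particles do not interact at all: each system is blind to the other, and they merely share the same vertex randomness $X^{\shortuparrow}_v,X^{\shortrightarrow}_v$. Consequently ``assigning each sys-1 particle a higher color than its paired sys-2 counterpart'' does not define a colored S6V configuration whose color-merged marginals are the two coupled systems, and Lemma~\ref{l.overtaking} (a statement about priority exclusion \emph{inside one} colored system, from \cite[Corollary 3.2]{drillick2024stochastic}) gives no information about how a sys-2 particle moves relative to sys-1 particles. There is therefore no geometric-tail estimate available to union-bound over pairs; producing such a cross-system comparison is precisely the content of the lemma, and it is the part your proposal leaves unproved.

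Note that this is exactly the obstruction the paper itself emphasizes: couplings of two height-ordered S6V systems of the strength available for ASEP do not hold, and Lemma~\ref{l.overtaking} is used in this paper only in the proof of Proposition~\ref{p.KPZ FP convergence for V shape bernoulli}, where the two initial configurations are genuinely nested (one is a thinning of the other), so that the colored representation is legitimate and the discrepancy particles can be given their own low colors one at a time. For the present statement the paper offers no proof at all---it is imported from \cite[Proposition D.3]{aggarwal2024scaling}---so there is nothing in-text your argument could be matching; and the secondary issue you flag (extending Lemma~\ref{l.overtaking} to many target particles by sequential exposure) is moot, since even granting it the lemma would still be applied to a coupling that is not a colored system. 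A correct proof needs an input that compares two basic-coupled, non-nested S6V systems directly (this is what the cited Proposition D.3 supplies, via a finite-system monotonicity argument rather than via the overtaking bound).
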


\begin{lemma}[Finite speed of discrepancy, {\cite[Lemma D.4]{aggarwal2024scaling}}]\label{l.finite speed of discrepancy for s6v basic coupling}
For any real numbers $q\in[0,1)$ and $b^{\shortrightarrow} \in (0,1)$, there exist constants $c>0$ and $C > 1$ such that the following holds. Let $m, N_1\in\N$ be integers, and for $k\in\intint{1,m}$, let $h^{(1), k}_0 : \intray{-N_1} \to \Z$  and $h_0^{(2), k}:\Z\to\Z$ be height functions which equal $0$ for all large $x$. Assume that for $k\in\intint{1,m}$, $h^{(1), k}_0(x) = h^{(2), k}_0(x)$ for all $x\in\intint{-N_1, N_1}$. For $k\in\intint{1,m}$, let $(j^{(1), k}_{(x,y)} : (x,y)\in\intray{-N_1}\times\intray{0})$ and $(j^{(2), k}_{(x,y)} : (x,y)\in\Z\times\intray{0})$ denote samples from S6V models with initial height functions given by $\smash{h^{(i), k}_0}$, all coupled via the basic coupling. Then, for any $T\in\N$ such that $T\leq \frac{1}{4}(1-b^{\shortrightarrow})N_1$, with probability at least $1-Cme^{-cN_1}$, we have that $j^{(1), k}_{(x,y)} = j^{(2), k}_{(x,y)}$ for all $k\in\intint{1,m}$ and $(x,y)\in\intint{1,T}\times\intint{-\floor{\frac{1}{2}N_1}, \floor{\frac{1}{2}N_1}}$.
\end{lemma}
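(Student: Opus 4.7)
\textbf{Plan for proving Lemma~\ref{l.finite speed of discrepancy for s6v basic coupling}.} The plan is to first reduce to the case of a single pair $(h^{(1), k}_0, h^{(2), k}_0)$ by a union bound over $k\in\intint{1,m}$, which produces the factor $m$ in the final bound, and then to track discrepancies between the coupled bi-infinite systems through a colored S6V argument. Under the basic coupling, the outgoing arrows at each vertex $(a,b)$ are a deterministic function of the incoming arrows and the shared Bernoulli variables $X^{\shortuparrow}_{(a,b)}, X^{\shortrightarrow}_{(a,b)}$. Iterating shows that $j_{(a,b)}$ depends on the initial data only through $\eta_0(y')$ for $y'\leq b$. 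Because $h^{(1), k}_0$ and $h^{(2), k}_0$ agree on $\intint{-N_1,N_1}$, the associated particle configurations agree on $\intint{-N_1+1,N_1}$, and any discrepancy at rows $y'\geq N_1+1$ only propagates upward, hence remains outside the window. Thus all potentially problematic discrepancies arise from ``extra'' arrows of the bi-infinite system at rows $y'\leq -N_1$.

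I would then introduce a colored S6V in which the arrows matching $\eta_0^{(1),k}$ receive a high-priority color A and the extra arrows at rows $y'\leq -N_1$ receive a low-priority color B. Under the basic coupling, the color-A subsystem evolves identically to the one-sided system, and a disagreement $j^{(1),k}_{(a,b)}\neq j^{(2),k}_{(a,b)}$ can occur only at vertices visited by some color-B arrow. The task therefore reduces to showing that, with probability at least $1-Ce^{-cN_1}$, no color-B arrow visits any vertex in $\intint{1,T}\times\intint{-\floor{N_1/2},\floor{N_1/2}}$.

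The central probabilistic input is the analysis of a single isolated color-B arrow's trajectory. By direct computation from the vertex weights in Figure~\ref{f.R weights}, between two successive rightward moves such an arrow makes $0$ upward moves with probability $b^{\shortrightarrow}$ and $k\geq 1$ upward moves with probability $(1-b^{\shortrightarrow})(qb^{\shortrightarrow})^{k-1}(1-qb^{\shortrightarrow})$; these per-column counts are iid with mean $\tfrac{1-b^{\shortrightarrow}}{1-qb^{\shortrightarrow}}\leq 1-b^{\shortrightarrow}$ and exponential moments. Hence the total vertical displacement $V_T$ after $T$ rightward moves satisfies $\E[V_T]\leq (1-b^{\shortrightarrow})T \leq \tfrac{(1-b^{\shortrightarrow})^2}{4}N_1$, and by a Chernoff bound $\P(V_T\geq \lambda)\leq C\exp(-c\lambda)$ for all $\lambda\geq \tfrac{1-b^{\shortrightarrow}}{2}N_1$, with constants $C,c$ depending only on $q$ and $b^{\shortrightarrow}$.

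The main obstacle is extending this isolated-trajectory bound to a uniform statement over all (potentially infinitely many) color-B arrows, whose trajectories are interdependent in the multi-particle colored S6V. I would handle this by a dyadic decomposition of the starting positions: for $j\geq 0$, let $B_j:=\intint{-2^{j+1}N_1,-2^{j}N_1-1}$, so $B_j$ contains at most $2^{j}N_1$ extra arrows and any arrow starting in $B_j$ must make at least $2^{j}N_1-\floor{N_1/2}\geq 2^{j-1}N_1$ upward moves in order to reach the window (with the case $j=0$ requiring at least $\lceil N_1/2\rceil$ upward moves). Combining the single-arrow estimate above with a stochastic-dominance argument based on the structure of the colored S6V weights (in particular, that an isolated trajectory from row $b_0$ provides an upper bound on the vertical reach of the corresponding arrow in the full multi-particle color-B system, up to negligible interaction effects preserved by the basic coupling) bounds the probability that any arrow in $B_j$ reaches the window by $C\cdot 2^{j}N_1\cdot \exp(-c\cdot 2^{j-1}N_1)$. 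Summing over $j\geq 0$ yields a total failure probability of $Ce^{-cN_1}$, and a final union bound over $k\in\intint{1,m}$ completes the proof with the desired bound $1-Cme^{-cN_1}$.
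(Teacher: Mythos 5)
First, a point of reference: the paper does not actually prove this lemma itself; it is quoted from \cite[Lemma D.4]{aggarwal2024scaling}, with only the remark that the finitely-many-arrows hypothesis there can be dropped, so your write-up has to stand on its own. Your reductions are sound as far as they go: the union bound over $k$, the observation that discrepancies in the initial data at rows above $N_1$ can never descend into the window, the identification of the dangerous discrepancies with the extra arrows of the bi-infinite system at rows below $-N_1$, which under the basic coupling are exactly the lower-priority (color-B) arrows, and the fact that $j^{(1),k}_{(x,y)}\neq j^{(2),k}_{(x,y)}$ precisely when a color-B arrow exits $(x,y)$ horizontally. The isolated-arrow computation is also correct, except that your bound on the mean goes the wrong way: the per-column mean is $\tfrac{1-b^{\shortrightarrow}}{1-qb^{\shortrightarrow}}\geq 1-b^{\shortrightarrow}$, not $\leq 1-b^{\shortrightarrow}$ (harmless for the final estimate).

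The genuine gap is the central domination claim, namely that an isolated trajectory upper bounds the vertical reach of a color-B arrow in the multi-arrow system ``up to negligible interaction effects.'' This is false in the direction you need. Reading the colored weights in Figure~\ref{f.colored R weights}: a lower-priority arrow traveling horizontally that meets a higher-priority arrow entering the same vertex vertically is deflected upward with probability $1-qb^{\shortrightarrow}>1-b^{\shortrightarrow}$, and a lower-priority arrow traveling vertically that is crossed by a higher-priority horizontal arrow continues upward with probability $b^{\shortrightarrow}>qb^{\shortrightarrow}$. So interactions with higher-priority arrows systematically accelerate the upward motion of low-priority arrows (the same phenomenon that forces the paper to invoke Lemma~\ref{l.overtaking} elsewhere); the isolated law is not a stochastic upper bound, and the interaction effects change the rates by constants, so they are not negligible. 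The argument can be repaired: conditionally on the sampling history, the probability that a tracked arrow turns up at a vertex is at most $\max(1-b^{\shortrightarrow},1-qb^{\shortrightarrow})=1-qb^{\shortrightarrow}$ and the probability it continues up is at most $\max(qb^{\shortrightarrow},b^{\shortrightarrow})=b^{\shortrightarrow}$, whatever the other (higher- or lower-priority) arrows do; hence its per-column climb is dominated by i.i.d.\ variables $G$ with $\P(G\geq k)\leq(1-qb^{\shortrightarrow})(b^{\shortrightarrow})^{k-1}$ and mean $\tfrac{1-qb^{\shortrightarrow}}{1-b^{\shortrightarrow}}$. Over $T\leq\tfrac14(1-b^{\shortrightarrow})N_1$ columns the expected climb is then at most $\tfrac14(1-qb^{\shortrightarrow})N_1\leq N_1/4$, still strictly below the required climb of order $N_1/2$ --- this is exactly the slack that the factor $\tfrac14(1-b^{\shortrightarrow})$ in the horizon provides --- and your Chernoff bound, dyadic decomposition over starting rows, and union bounds then go through. (Equivalently, one can avoid tracking individual arrows and instead bound the topmost discrepancy row column by column with the same geometric estimates.) As written, however, the pivotal step of your proof is unsupported, and the specific dominance you assert is wrong in sign.
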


\cite[Lemma D.4]{aggarwal2024scaling} actually assumes that there are only finitely many arrows in the initial condition for each system, but an inspection of the proof shows that the argument applies equally in the case that there are infinitely many arrows.

\begin{proof}[Proof of Lemma~\ref{l.s6v approximate height monotonicity}]
Consider two S6V models with the following pairs of initial conditions. In the first system, let $h^{(1), i} = h^{(i)}$ for $i\in\{1,2\}$, where $h^{(1)}$ and $h^{(2)}$ are as in the statement of Lemma~\ref{l.s6v approximate height monotonicity}. In the second system, let $h^{(2),1}$ and $h^{(2),2}$ be defined by setting, for $i\in\{1,2\}$, $h^{(2),i}(x) = h^{(i)}(x)$ for $x\in\intint{-2N,2N}$, $h^{(2),i}(x) = h^{(2), i}(-2N)$ for $x\leq -2N$, and $h^{(2),i}(x) = h^{(2), i}(2N)$ for $x\geq -2N$. Thus the initial configuration of the second system has finitely many arrows. Coupling all the evolutions across both systems via the basic coupling, we see from Lemma~\ref{l.s6v approximate height monotonicity finite system} that, with probability at least $1-C\exp(-cM)$,
\begin{align*}
h^{\mrm{S6V}}(h_0^{(2), 1};x,t) + H\geq h^{\mrm{S6V}}(h_0^{(2),2}; x,t) - M
\end{align*}
for all $x\in\intint{-2N,2N}$. By Lemma~\ref{l.finite speed of discrepancy for s6v basic coupling}, with probability at least $1-C\exp(-cN)$, $h^{\mrm{S6V}}(h_0^{(1),i}; x,t) = h^{\mrm{S6V}}(h_0^{(2),i}; x,t)$ for $i\in\{1,2\}$ and all $(t,x)\in \intint{1,T}\times \intint{-N,N}$ as long as $T \leq \frac{1}{2}(1-b^{\shortrightarrow})N$. Since $C\exp(-cN) < C\exp(-cM)$, this completes the proof.
\end{proof}

\subsection{Control of maximizer in KPZ fixed point definition}\label{s.fixed point maximizer proof}

\begin{proof}[Proof of Lemma~\ref{l.argmax control}]
By \cite[Lemma 3.8]{IGCI}, the argmax in the probability we are lower bounding is almost surely a non-decreasing function of $y$. Thus it is equal to
\begin{align*}
\P\left(\argmax_{z\in\R}\bigl(\h_0(z) + \cL(z,0;y,t)\bigr) \in [-M,M]  \ \ \forall y\in\{-J,J\}\right).
\end{align*}
We will prove the required lower bound for each fixed $y \in \{-J,J\}$, which suffices. We may assume $M>M_0$ for an $M_0 = M_0(J,\lambda,t)$ that we may set as then the claimed bound is obtained by modifying $C$ appropriately. Recall we have assumed $\h_0$ satisfies \eqref{e.initial condition assumption}. So let $z_0\in[-\lambda,\lambda]$ be such that $\h_0(z_0) \geq -\lambda$, and let $M_0$ be larger than $\lambda$. Now taking the complement of the event in the previous display, a union bound and the fact that $z_0\in[-M,M]$ implies that
\begin{align}
\P\left(\argmax_{z\in\R}\bigl(\h_0(z) + \cL(z,0;y,t)\bigr) \not\in [-M,M]\right) 
&\leq \P\Bigl(\h_0(z_0) + \cL(z_0,0; y, t) \leq -\lambda- M\Bigr) \label{e.argmax split up}\\
&\qquad + \P\left(\sup_{|z|>M} \left(\h_0(z) + \cL(z,0; y,t)\right) \geq -\lambda - M\right).\nonumber
\end{align}

Using that $\h_0(z_0)\geq -\lambda$, the first term of the righthand side of \eqref{e.argmax split up} can be bounded above by 
$$\P\left(\cL(z_0, 0; y, t) \leq -M\right) = \P\left(\cL(0, 0; 0, t) \leq -M + (z_0-y)^2/t\right).$$
By raising $M_0$ depending on $J, \lambda, t$, we may upper bound $-M+(z_0-y)^2/t$ by $-M/2$. Next, $\smash{\cL(0,0;0,t) \stackrel{d}{=} t^{1/3}\cL(0,0;0,1)}$ and $\cL(0,0;0,1)$ has the GUE Tracy-Widom distribution. The latter is well-known to have tails of the form $\exp(-cx^3)$. Together, this yields that the probability in the previous display is upper bounded by $C\exp(-cM^3)$ for some positive constants $C,c$ depending on $t$.

For the second term of \eqref{e.argmax split up}, we use that $\h_0(z) \leq \lambda(1+|z|)$ for all $z$ to upper bound it by
\begin{align*}
\P\left(\sup_{|z|>M} \bigl(\cL(z,0; y,t) + \lambda|z|\bigr) \geq -2\lambda-M\right)
\end{align*}
We recall that $\cL(z,0;y,t) \stackrel{d}{=} \cL(y,0;z,t)$ as processes in $y,z$ (by combining \cite[Proposition 1.23]{dauvergne2021scaling} with temporal stationarity, \cite[Lemma 10.2 (1)]{dauvergne2018directed}), and that $\smash{\cL(y,0;z,t) \stackrel{d}{=} t^{1/3}\cL(0,0; z-y,1)}$ as a process in $z$. Then invoking standard estimates in the literature like \cite[Proposition~8.5]{ganguly2022sharp} bounds the previous display by $C\exp(-cM^{3/2})$ for positive constants $C,c$ depending on $t, \lambda, J$. This completes the proof.
\end{proof}

\subsection{Finite speed of propagation}\label{s.finite speed of prop}

\begin{proof}[Proof of Lemma~\ref{l.finite speed of propagation}]
We give the proof for the case of S6V first. We merge colors to arrive at a system with particles of two colors, $1$ and $2$ (as well as holes). The evolution of the system can be described by allowing the color 2 particles to evolve first and then allowing the color 1 particles to allow; in particular, this means that if a color 1 particle is incident on a vertex that a color 2 particle passes through, its output from that vertex is determined by the trajectory of the color 2 particles.

For the rightmost color 1 particle to cross a distance of at least $\varepsilon^{-2}$ in time $\floor{\varepsilon^{-1}}$, there must be at least one vertical column where it moved vertically through at least $\varepsilon^{-1}$ many vertices. Observe that for any given vertical column this has probability at most $b^{\varepsilon^{-1}}$. Indeed, letting $n$ be the number of locations (if any) that the color 2 particles passed horizontally through that column, we see that the probability we are bounding is at most $(qb^{\shortrightarrow})^{\varepsilon^{-1}-n}\cdot(b^{\shortrightarrow})^{n} \leq (b^{\shortrightarrow})^{\varepsilon^{-1}}$, for each selection of locations where the color 2 particles enter. This completes the proof.

For ASEP, we do a similar merging procedure to obtain a system with particles of colors 1 and 2, along with holes. Consider the trajectory of the rightmost color 1 particle. By suppressing left jump attempts, its position is stochastically upper bounded by that of a particle which was at the same location at time 0 and jumps to the right at rate $1+q$ without seeing any particles, i.e., is never blocked from jumping (obtained by combining the possibility that the color 1 particle jumps to the right, which has rate 1, and the possibility that the site to the right had a color 2 particle which jumps to the left, which has rate $q$). The probability that this particle travels at least $\varepsilon^{-2}$ in time $\varepsilon^{-1}$ is upper bounded by the probability that a sum of $\varepsilon^{-2}$ \iid exponential random variables of rate $1+q$ is smaller than $\varepsilon^{-1}$. Since the mean of the sum is $(1+q)^{-1}\varepsilon^{-2}$, standard concentration inequalities (e.g., \cite[theorem 5.1]{janson2018tail}) yield that this event has probability at most $\exp(-c\varepsilon^{-2})$.
\end{proof}

\subsection{Random walk staying above line}\label{s.random walk proof}

\begin{proof}[Proof of Lemma~\ref{l.RW avoiding line}]
Let $\xi$ be a random variable assigning probability $|\lambda|$ to $-1$ and $1-|\lambda|$ to $0$ and let $g(z) = \log \E[e^{z\xi}] = \log(|\lambda| e^{-z} + (1-|\lambda|))$.  We pick $z_0 = z_0(\rho)>0$ such that $g(-z_0) = -\rho z_0$, whose existence is justified as follows. It is an easy calculation that $f(z):= g(z)-\rho z$ satisfies $f(0)=0$, $f'(0) = \lambda-\rho>0$, $f''(z) >0$ for all $z\in\R$, and $\lim_{z\to-\infty} f'(z) = -1-\rho <0$. Together these facts imply there exists $z_0>0$ such that $f(-z_0) = 0$, as desired. Further, it holds that $z_0 \in (c(\lambda-\rho), C(\lambda-\rho))$ for some $c,C>0$ which may depend on $\lambda$. This follows by Taylor expanding $g$ to second order and noting that the error term, determined by the third derivative of $g$, can be uniformly bounded (with the bound depending on $\lambda$) on a disk around the origin using the analyticity of $g$ on that set.

It is standard that $X(t):=\exp(-z_0S(t)-tg(-z_0)) = \exp(-z_0(S(t)-\rho t))$ is a martingale. Consider the stopping time $T = \min\{t\in\Z_{\geq 0}: S(t) < \rho t -M\}$, with $\min\emptyset = \infty$. Since $S$ takes steps of $0$ or $-1$ only, it follows that $S(T)\in (\rho (T-1)-M-1,\rho T-M)$ when $T<\infty$.  In particular, we note that $X(t\wedge T)$ is a bounded martingale (since $z_0>0$). Also, since $\lambda>\rho$, it is easy to see that $\lim_{t\to\infty} X(t) = 0$ almost surely. Combining these facts with the optional stopping and dominated convergence theorems, we obtain
\begin{align*}
\E[X(T)\one_{T<\infty}] = \lim_{t\to\infty}\E[X(t\wedge T)] = \E[X(0)] = 1.
\end{align*}
By our earlier observation on the value of $S(T)$, on $\{T<\infty\}$, $X(T) \geq \exp(z_0M)$. Combining this with the fact that $z_0\geq c(\lambda-\rho)$, we see that $\P(T<\infty) \leq \exp(-c(\lambda-\rho)M)$, as desired.
\end{proof}

\bibliographystyle{alpha}
\bibliography{gen_initial_data}

\end{document}